\def\addlegendimage{\csname pgfplots@addlegendimage\endcsname}
\newtheorem{theorem}{Theorem}
\newtheorem{lemma}[theorem]{Lemma}
\newtheorem{proposition}[theorem]{Proposition}
\newtheorem{corollary}[theorem]{Corollary}
\newtheorem*{theorem*}{Theorem}
\newtheorem{definition}{Definition}
\newtheorem{example}{Example}
\newtheorem{remark}{Remark}
\newtheorem{assumption}{Assumption}
\renewcommand\theassumption{A\arabic{assumption}}
\def\P{\mathrm{P}}
\def\E{\mathrm{E}}
\def\Var{\mathrm{Var}}
\def\Cov{\mathrm{Cov}}
\def\det{\mathrm{det}}
\def\Ep{\E_{\mathbf{p}}}
\def\Varp{\Var_{\mathbf{p}}}
\def\Covp{\Cov_{\mathbf{p}}}
\DeclareMathOperator*{\argmin}{arg\,min}
\DeclareMathOperator*{\argmax}{arg\,max}
\newcommand{\apost }{'}
\newcommand{\parrow}{\:\to_{\mathrm P}\:}
\newcommand{\darrow}{\:\to_{\rm d}\:}
\newcommand{\asarrow}{\:\to_{\rm as}\:}
\pgfplotsset{compat=1.17}
\begin{document}

\title{Stochastic Approximation of Symmetric Nash Equilibria in Queueing Games\footnote{To appear in Operations Research.}}

\author[1]{Liron Ravner}
\author[2]{Ran I. Snitkovsky}
\affil[1]{\small{Department of Statistics, University of Haifa}}
\affil[2]{\small{Coller School of Management, Tel Aviv University}}

\date{\today}
\maketitle

\begin{abstract}
We suggest a novel stochastic-approximation algorithm to compute a symmetric Nash-equilibrium strategy in a general queueing game with a finite action space. The algorithm involves a single simulation of the queueing process with dynamic updating of the strategy at regeneration times. Under mild assumptions on the utility function and on the regenerative structure of the queueing process, the algorithm converges to a symmetric equilibrium strategy almost surely. This yields a powerful tool that can be used to approximate equilibrium strategies in a broad range of strategic queueing models in which direct analysis is impracticable. 
\end{abstract}

\noindent\textbf{Keywords:} Simulation; Queues; Noncooperative Games; Queue
Approximations

\section{Introduction}

This paper presents a simulation-based algorithm that computes, or learns, a symmetric Nash equilibrium in a general class of queueing games. Our purpose is to suggest a robust, easy-to-implement equilibrium approximation scheme, mainly targeted at queueing games in which identifying equilibrium strategies is difficult to pursue analytically. In particular, we consider service systems to which rational customers arrive according to a renewal process, each chooses between finitely many actions with the goal of maximizing their expected utility. Customers in our model assume that the system operates in a steady state, however, their assessment of the utility depends on other customers' actions, giving rise to a game-theoretic equilibrium problem: Identifying a behavior profile that prescribes an optimal play for customers, such that the induced steady state is consistent with each customer's belief regarding the behavior of the others. 

Though the initiation of the literature about strategic customer behavior in queues is often accredited to Naor \cite{N1969}, the most basic, yet non-degenerated queueing game was first introduced by Edelson and Hildebrand \cite{EH1975}, and was coined the unobservable M/M/1 model. It discusses a single-server queue were customer utility decreases linearly with the waiting time, and customers need to decide whether to join the queue or balk, without observing the system state. 
Since then, various different queueing games have been studied in the literature, to model the behavior of callers in  call centers, users of communication networks, passengers in a transportation system, and more (see  overviews and exhaustive surveys in \cite{HH2003} and \cite{H2016}). 

Traditionally, this literature distinguishes between `unobservable' and `observable' models, although this conceptual distinction is rather obscure. By unobservable, one usually means that the strategy of a customer cannot rely on any state information but the premise that this state is drawn from the stationary distribution. Conversely, observable (and partially observable) models are commonly used to describe games in which customers are endowed with information about the state to which they arrive, hence, making their actions state dependent. In that sense, the observable framework arguably allows a richer description of customer behavior, although technically, many observable queueing games discussed in the literature can be reframed as unobservable models via a suitable modification of the action set.

In most queueing games, observable or unobservable, customer expected utility (ex ante) is a function of their own strategy, in which the expectation is with respect to the stationary distribution arising from the cumulative behavior of all players. When customers are homogeneous, the conventional solution concept is the symmetric Nash equilibrium. Characterizing a symmetric Nash equilibrium means finding a fixed point of the best-response function, which is the function that maps a strategy profile to the best strategy to play against it. 

Unlike the Markovian setting in \cite{EH1975}, for non-elementary queueing models, the stationary state distribution and the resulting customer expected utility rarely admit closed form expressions. Think, for example, of a GI/G/1 queue, with customer utility depending on their delay. In this example, as well as in many others, there is no explicit formula for the expected sojourn time of a customer (let alone its distribution), hence, approaching an equilibrium solution with only analytic tools is intractable. Tackling such problems often brings the need for sophisticated numeric calculation and simulation schemes.

We present a stochastic-approximation (SA) algorithm that converges to an equilibrium solution for a general class of models. Specifically, the algorithm involves simulating the queueing process, and based on the realizations, updating the strategy at carefully chosen regeneration times. The update relies on estimating the deviation of the strategy from its best response, using a smoothing transformation of the best-response correspondence. Under mild regularity assumptions it is shown that the algorithm converges almost surely to a Nash equilibrium as the simulation length goes to infinity. The convergence conditions are verified for several examples, and numerical results are presented.

Simulation is a very popular technique among queueing theorists and practitioners as a tool to approximate the performance of a model that cannot be approached analytically. The literature studying the theory and applications of simulation methods for queueing networks is extensive (see \cite{AG2007} for a comprehensive overview), and algorithms can efficiently learn the performance of quite intricate networks, given the set of primitives. However, in the strategic queueing domain, simulation is hardly ever used, primarily due to the difficulties that arise in the equilibrium search process: 
In queueing games, the strategy profile governs the dynamics of the underlying system. Thus, to identify its best response (or an approximation thereof), the strategy should be fed to the simulator up front.
For a strategy given a priori, simulation can be used to verify, with high certainty, whether this strategy approximately meets the equilibrium criterion. But when an equilibrium strategy is to be found, with no simple solution at hand, this will require traversing through the set of possible strategies,
and performing a separate simulation at each iteration to check each strategy. This procedure can be tedious and even impractical, especially when the simulation time at each iteration is exceedingly long. Moreover, the outcome of each iteration is subject to uncertainty, and so, if not carefully implemented, such a process is not guaranteed to converge at all, or perhaps worse -- converge to an undesired limit.

It is possible that for these reasons the literature of strategic queueing is often limited to stylized models, of which performance measures can be easily expressed. In their own right, stylized models play an important role in understanding the theoretical properties of queueing games. Yet, for the purpose of departing from a merely theoretical framework, it is crucial to have the ability to compute the equilibrium outcome in elaborate systems as well. 

We introduce a novel simulation scheme that jointly learns the system's statistical characteristics and customer best-response dynamics, such that the process efficiently converges to a Nash equilibrium. Our framework allows for customer strategies to depend on state information, thus, it is applicable for a wide range of models. Still, the key results and intuition supporting these results are more easily conveyed using the so-called `unobservable' class of models. We first discuss our method in the context of unobservable queueing games, and later complement the discussion by suggesting a refinement for observable models as well. We further discuss several practical and theoretical aspects of implementation, such as rate of convergence.

Our method relies on the SA algorithm commonly known as the Robbins-Monro algorithm \cite{RM1951}, a fundamental building block underpinning various stochastic optimization techniques, among which is the renowned Stochastic Gradient Descent method. However, as opposed to standard optimization problems, we do not seek the root of a gradient. In our framework, the equilibrium condition needs to satisfy an indifference principle between randomly chosen actions. Under regularity conditions, an equilibrium is found by the SA algorithm in the limit with probability one. In other words, a long enough (single) simulation is guaranteed to converge to an equilibrium strategy. This idea draws inspiration from our understanding of how equilibrium emerges in real world systems: customers adapt their strategy based on observing past empirical performance of the system, and eventually converge to an equilibrium.

\subsection{A motivating example -- Two unobservable GI/G/1 queues in parallel}\label{sec:motivation}

The main purpose of the paper is to describe a general approximation technique for a broad range of applications, and therefore the fundamental queueing game formulated in Section \ref{sec:model}, and its extension in Section \ref{sec:observable}, are relatively abstract. Yet, to demonstrate the competency of our method we first look at a special setup that is easily explained on one hand, but is  analytically intractable on the other hand.

Consider a network, termed the \emph{system}, consisting of two FCFS queues (stations), indexed by $m\in\{1,2\}$, with dedicated servers  working in parallel, one for each queue. Service times at  station $m=1,2$ are independent and follow a general light-tailed distribution $F_m$ with mean $1/\mu_m$, such that w.l.o.g. $1/\mu_1\geq 1/\mu_2$. Potential customers arrive at the system according to a renewal process with inter-arrival time distribution $H$ and mean $1/\lambda$. The state of the system at an arbitrary point in time is characterized by a vector $X=(X^{[1]}, X^{[2]})$, with $X^{[m]}$ being the workload at station $m$. 

Upon arrival, potential customers make decisions based on a prior belief regarding the mean waiting time at each queue (e.g., relying on historic observations), yet, they cannot observe the current system state. Each customer strategically chooses one of three possible actions: (1) Join station $1$; (2) Join station $2$; or (3) Balk. Respectively, we denote the set of actions for a customer by $\mathcal{A}=\{ 1,  2,  3 \}$. A strategy  $\mathbf{p}=(p_1, p_2, p_3)\in\Delta(\mathcal{A})$ is a distribution over the action set with $p_i$ being the probability of taking action $i\in \mathcal{A}$. We assume the utility from joining is linear in the waiting time: Following the conventional notation of \cite{HH2003}, 
let $R>0$ denote the customer reward for service and $C>0$ be the customer waiting-time cost. When a customer finds the system at state $X$ and joins queue $m\in \{1,2\}$, their utility is given by $v_m = R-C\cdot (X^{[m]} + Y_m)$, where $Y_m\sim F_m$ is a r.v. representing the customer's service time at server $m$. The utility from balking, $v_3$, is normalized to 0.

Customers assume when they arrive, that the state $X$ admits its stationary distribution  (embedded at arrival instants), which is determined by the population strategy $\mathbf{p}$. To avoid diverting the discussion towards issues of stability, we assume that $\lambda < \mu_1$, implying that $X$ indeed admits a stationary distribution for every $\mathbf{p}$. A detailed discussion of the stability conditions for generalizations of this model can be found in Section~\ref{sec:parallel_GG1}.

 \textbf{Equilibrium strategy.} We are interested in characterizing an equilibrium strategy $\mathbf{p}^e = (p^e_1, p^e_2, p^e_3 )\in\Delta(\mathcal{A})$, such that under the stationary state distribution induced by $\mathbf{p}^e$, each customer's choice is utility maximizing in expectation. In other words, $\mathbf{p}^e$ is a strategy such that for each $i \in \mathcal{A}$
\[ 
p^e_i > 0 \Rightarrow i \in \argmax_{j\in \mathcal{A}} \E [v_j],
\]
where the expectation is taken jointly w.r.t the random variables $Y_1, Y_2$, and \textcolor{black}{the stationary system state at arrival instants, $X$, induced by the strategy $\mathbf{p}^e$}. The following lemma, whose proof follows standard arguments (see Section \ref{lem:GG2_existence_proof} in the appendix), establishes existence and uniqueness of the  equilibrium strategy in focus.

\begin{lemma}\label{lem:GG2_existence} Assume the inter-arrival distribution $H$ is continuous. For any pair of distributions $(F_1,F_2)$ and  parameters $R$ and $C$, there exists a unique symmetric Nash equilibrium strategy $\mathbf{p}^e$.
\end{lemma}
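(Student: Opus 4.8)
The plan is to exploit the product structure of the two parallel queues and reduce the fixed-point problem to a one-dimensional monotonicity argument. First I would note that, for a population strategy $\mathbf{p}=(p_1,p_2,p_3)$, the arrival stream into station $m$ is the rate-$\lambda$ renewal process independently thinned with probability $p_m$, which is itself a renewal process (of rate $\lambda p_m$) whose law depends on $p_m$ alone. Hence the marginal stationary law of the workload $X^{[m]}$ seen at arrivals to station $m$, and therefore the mean sojourn time $W_m(p_m):=1/\mu_m+\E[X^{[m]}]$, depends only on $p_m$, so the expected utilities decouple as $\E[v_m]=R-C\,W_m(p_m)$ for $m\in\{1,2\}$ and $\E[v_3]=0$. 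Since $\lambda p_m\le\lambda<\mu_1\le\mu_m$, every station is stable for every admissible $\mathbf{p}$, so these quantities are all finite.

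The analytic core is the queueing fact that on $[0,1]$ the map $p_m\mapsto W_m(p_m)$ is continuous and strictly increasing. Monotonicity I would obtain from a coupling in which the underlying renewal epochs and the i.i.d.\ thinning variables are fixed on a common space; for $p_m<p_m'$ the accepted arrivals are nested, and since the workload is a pathwise nondecreasing functional of its input stream (via the Lindley/workload recursion), $X^{[m]}(p_m)$ is stochastically dominated by $X^{[m]}(p_m')$, with strict domination of means because an extra accepted customer contributes positive work with positive probability. Continuity in $p_m$ follows from continuity of the thinned inter-arrival law in $p_m$ together with continuous dependence of the stationary workload mean on the arrival law under the uniform stability bound above; this is where continuity of $H$ enters, ruling out atoms that could create jumps or flats. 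Writing $f_m(p_m):=R-C\,W_m(p_m)$, each $f_m$ is then continuous and strictly decreasing, with a well-defined inverse on $[f_m(1),f_m(0)]$.

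With $f_1,f_2$ in hand I would characterise equilibria through the common attained utility $u:=\max_j\E[v_j]\ge 0$ (balking guarantees $0$). Define the clamped inverse demand $p_m(u)$ to equal $0$ for $u\ge f_m(0)$, to equal $f_m^{-1}(u)$ for $f_m(1)<u<f_m(0)$, and to equal $1$ for $u\le f_m(1)$; each $p_m(\cdot)$ is continuous and nonincreasing, and the equilibrium support conditions are precisely $p_m^e=p_m(u^e)$ at the appropriate level $u^e$. Setting $G(u):=p_1(u)+p_2(u)$, a continuous nonincreasing function with $G(u)\to0$ as $u\to\infty$, an equilibrium is either a level $u^e>0$ with $G(u^e)=1$ and no balking, or the level $u^e=0$ with $G(0)\le1$ and balking probability $1-G(0)$. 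Existence is then immediate: if $G(0)\le1$ take $u^e=0$, and if $G(0)>1$ the intermediate value theorem yields $u^e>0$ with $G(u^e)=1$.

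For uniqueness I would use that $G$ is nonincreasing, so its level set $\{u:G(u)=1\}$ is either a single point or a single closed interval; on such an interval $G=p_1+p_2$ is constant while each $p_m$ is individually nonincreasing, forcing both $p_1,p_2$ to be constant there, so the strategy $(p_1(u),p_2(u),0)$ is pinned down even when the level is not. The two regimes cannot coexist: $G(0)<1$ leaves only the balking solution, $G(0)>1$ precludes any $u=0$ equilibrium, and the boundary case $G(0)=1$ yields the same strategy from both descriptions. The one genuine subtlety is excluding a degenerate ``all to the slow station'' plateau, i.e.\ a flat of $G$ at level $1$ with $(p_1,p_2)=(1,0)$; this would require $f_2(0)\le f_1(1)$, impossible because $f_1(1)=R-C\,W_1(1)\le R-C/\mu_1\le R-C/\mu_2=f_2(0)$, so any plateau instead corresponds to the well-defined strategy $(0,1,0)$. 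This settles both existence and uniqueness. I expect the main obstacle to be the queueing lemma of the second paragraph, the continuity and strict monotonicity of the mean sojourn time in the routing probability; the surrounding steps are elementary monotone analysis.
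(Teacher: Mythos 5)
Your argument is correct and reaches the same conclusion, but by a genuinely different route from the paper's. The paper's proof (Appendix B.1) is an explicit case analysis on the sign pattern of $R-C/\mu_1$ and $R-C/\mu_2$, with sub-cases according to whether the indifference equations $R-Cw_m(p_m)=0$, or $w_1(p_1)=w_2(1-p_1)$, can be solved inside the simplex; the equilibrium is constructed and its uniqueness checked separately in each regime. You instead parametrize candidate equilibria by the attained utility level $u$, build the clamped inverse-demand functions $p_m(u)$ and the market-clearing map $G(u)=p_1(u)+p_2(u)$, and obtain existence from the intermediate value theorem and uniqueness from the monotonicity of $G$ together with the observation that each $p_m$ is individually constant on any flat of $G$. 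This buys a uniform treatment of all the boundary configurations that the paper enumerates by hand, and it scales directly to the $k-1$ parallel-queue model of Section \ref{sec:parallel_GG1}, where a case enumeration would be unwieldy. Both proofs ultimately rest on the same queueing input --- that $p_m\mapsto w_m(p_m)$ is continuous and \emph{strictly} increasing --- and neither fully establishes it: the paper cites Whitt (1974) for continuity and merely asserts monotonicity, while you sketch a thinning coupling. Note that the coupling as described only yields weak stochastic monotonicity; strictness of the increase of the \emph{mean} is genuinely needed (a flat of $f_m$ at the equilibrium level would produce a continuum of equilibria, e.g.\ when services are deterministic and shorter than the essential infimum of the inter-arrival time, every arrival sees an empty system and $w_m$ is constant), and it requires the further observation that with positive probability the extra accepted customer's work has not drained by the next arrival epoch. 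Since the paper leans on the same unproved assertion, this is not a gap relative to the paper, but it is the one load-bearing step in both writeups, and you are right to flag it as the main obstacle.
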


However, except for some special cases, an explicit characterization of this equilibrium strategy is not available, because the values of $\E [v_1]$ and $\E [v_2]$ given strategy $\mathbf{p}$ are inaccessible. Whether a strategy $\mathbf{p}=(p_1, p_2, p_3)\in\Delta(\mathcal{A})$ satisfies the equilibrium condition is a question that cannot be answered accurately, but rather approximately, based on one's ability to approximate $\E [v_1]$ and $\E [v_2]$ for the underlying strategy $\mathbf{p}$. For example, one can determine whether $\mathbf{p}$ satisfies an $\epsilon$-equilibrium condition, with $\epsilon$ depending on the ability to bound the approximation error for $\E [v_j]$.

 \textbf{Stochastic approximation.} For a given strategy $\mathbf{p}$, assume a customer arrives at the system when both queues are empty. Then we can consider this moment of arrival as an instant of system regeneration. Our method relies on simulating regeneration cycles, i.e., simulating the queueing process between two successive instants of system regeneration. During each cycle we assume that the customer strategy $\mathbf{p}$ is fixed, and keep track of the system state at every arrival instant. Let the r.v. $L$ denote the number of arrivals during a cycle (including the arrival that initiates the cycle). \textcolor{black}{Using discrete-event simulation}, we generate a single cycle consisting of $L\geq 1$ samples of the system state, $X_1, \dots, X_L$, with $X_j = (X_j^{[1]}, X_j^{[2]})$, and combine them together by defining
\[ 
\mathbf{G} = \begin{pmatrix} G_1 \\ G_2 \\ G_3 \end{pmatrix} = \sum_{j=1}^L\begin{pmatrix} R-C\cdot (X_j^{[1]} + 1/\mu_1) \\ R-C\cdot (X_j^{[2]} + 1/\mu_2) \\ 0 \end{pmatrix} .
\]
In words, we sum up the (conditional) expected utility over all arriving customers of that cycle, for each action they could have taken, had they observed the system state upon arrival. 
The motivation for the construction of $\mathbf{G}$ above is that for any  $\mathbf{p}$, the term $G_i/ \E[L]$, $i\in\{1,2,3\}$, can be regarded as an unbiased point estimator for $\E[v_i]$. Yet the values $\E[L]$ and $\E[v_i]$ themselves are assumed unknown, and obviously are not given as inputs to the algorithm. This particular form of the estimator $\mathbf{G}$ is tailored to correct for the length bias that often arises when estimating performance measures of a stochastic processes from random samples.

An application of our SA algorithm works as follows: Setting an arbitrary initial strategy $\mathbf{p}^{(1)}$, at each iteration $n=1,2, \dots$, we construct an estimator $\mathbf{G}^{(n)}$ assuming the strategy $\mathbf{p}^{(n)}$ is fixed. Given a constant $\gamma_0>0$, we then apply the following update rule:
\[ 
\mathbf{p}^{(n+1)} = \pi_{\Delta}\left(\mathbf{p}^{(n)}+\frac{\gamma_0}{n}\mathbf{G}^{(n)} \right),
\]
where $\pi_{\Delta}$ denotes the projection onto the standard (2-)simplex.
In Section \ref{sec:converg} of the paper we prove that under mild regularity of the primitives the proposed scheme converges (almost surely) to the true equilibrium, $\mathbf{p}^{(n)}\asarrow \mathbf{p}^{e}$. The parameter $\gamma_0$ is measured in the reciprocal of utility units, and clearly, the choice of its value will impact the performance of the algorithm, but we defer the discussion of this issue to later sections.

\textbf{Simulation results.} Below are results from a simulation, in which $F_1$ is $\rm{Beta}(10, 10)+0.5$, $F_2$ is $\rm{Bernoulli}(0.1) \cdot 10$, and inter-arrivals are distributed according to $\rm{Gamma}(0.1, 11)$, implying altogether that $\lambda = 10/11 <\mu_1=\mu_2=1$. The reward and cost parameters are given by $R=5$ and $C=1$. To set up the simulation we initialize $\mathbf{p}^{(1)}=(1/3, 1/3, 1/3)$ and $\gamma_0$=0.1, and run the algorithm for $N=10^6$ iterations. The convergence of the sequence $\{\mathbf{p}^{(n)}\}$ is depicted in Figure \ref{fig:multi-GG1}. After terminating at iteration $N=10^6$, the algorithm produces an output $\mathbf{p}^{(N)} = (0.525, 0.330, 0.145)$. We note that the coefficient of variation of $F_2$ is significantly higher than that of $F_1$ ($3$ compared to $0.11$), and generally speaking, higher service-time variation leads to longer waiting time and in turn to smaller utility. Thus, it is expected that in equilibrium, the negative impact of the higher variance in Server 2 will be effectively balanced by less arrivals, namely, that $p^e_1 > p^e_2$. A short numerical study described in Appendix \ref{sec:appVER} suggests that with high certainty (>99\%), our approximated solution $\mathbf{p}^{(N)}$ satisfies the criterion for an $\epsilon$-approximate Nash equilibrium (\cite{DMP2009}) for $\epsilon \leq 0.028$.

\begin{figure}[H]
\centerline{\includegraphics[scale=.5]{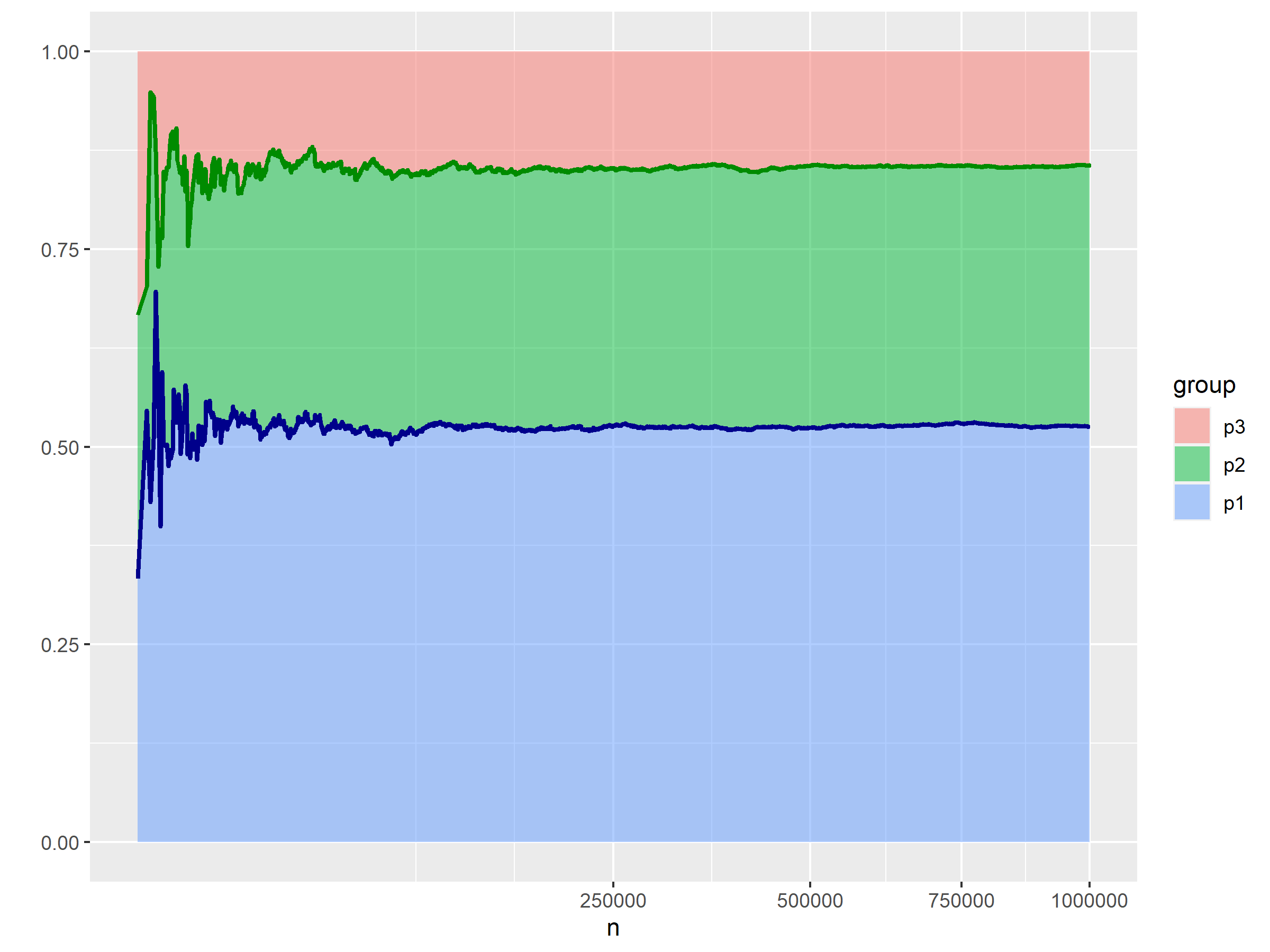}}
\caption{A stacked-area plot showing the convergence of the SA algorithm as $n\to\infty$. The coordinates of $\mathbf{p}^{(n)}=({p}^{(n)}_1, {p}^{(n)}_2, {p}^{(n)}_3)$ are plotted vs. $n$ on a square-root scale. The blue curve depicts ${p}^{(n)}_1$, and the green curve depicts ${p}^{(n)}_1+{p}^{(n)}_2$. Thus, the blue, green and orange shaded areas correspond with  ${p}^{(n)}_1$,  ${p}^{(n)}_2$, and  ${p}^{(n)}_3$, respectively. } \label{fig:multi-GG1}
\end{figure}

\subsection{Outline and main results}\label{sec:outline}
As discussed, our main result is the derivation of a simulation-based approximation technique to compute equilibria. The method is robust in the sense that it can be applied to a wide range of strategic queueing models. Below we summarize the organization of the paper and its results:

\textbf{Theoretical results.} In Section \ref{sec:model} we introduce a unified framework for unobservable queueing games with homogeneous customers and finitely many actions. As a byproduct we derive sufficient conditions for the existence of a symmetric Nash equilibrium in this class of games (Lemma \ref{lem:existence-of-equilibiurm}), which we show how to reduce down to a fixed-point problem of a real continuous function. This naturally yields an iterative (deterministic) mechanism to approximate equilibrium solutions, had customer stationary utility been known. In Section \ref{sec:SA} we explain how this stationary utility can be estimated based on samples drawn from simulated regeneration cycles. The proposed estimator serves in place of the unknown original utility, in combination with the iterative fixed-point scheme from Section \ref{sec:model}, to form the SA algorithm. The SA algorithm is shown, in Theorem \ref{thm:convergence}, to converge almost surely to an equilibrium under mild regularity conditions. Our main convergence result is accompanied with a thorough discussion of the assumptions and convergence-rate results in Sections \ref{sec:disc} and \ref{sec:conv}. In Section \ref{sec:observable} we then extend the method to account for state-dependent strategies in order to deal with observable models.

\textbf{Examples and implementation.} All along Sections \ref{sec:model} and  \ref{sec:SA} we use the canonical unobservable M/G/1 queue as an instructive tool to exemplify and generate intuition from the general model. In Section \ref{sec:applications} we discuss relevant applications, namely multiple GI/G/1 queues in parallel (Section \ref{sec:parallel_GG1}), and selective routing between queues with different buffer capacity (Section \ref{sec:CR}). We verify the sufficient convergence conditions for these applications, and use them to present refinements for run-time improvement. In Section \ref{sec:observable} we discuss the observable GI/G/1 model as an example of observable queueing games and provide results from numeric experiments. 


\subsection{Related literature}\label{sec:lit}

\textbf{Fundamentals of queueing games.} The game of joining or balking from an unobservable queue was first introduced by Edelson and Hildebrand \cite{EH1975}. They consider an M/M/1 queue with strategic customers wishing to maximize a utility function comprised of a fixed reward and a linear cost for waiting. 
As a simple formula is readily available for the expected waiting time in an M/M/1 queue, the equilibrium joining probability can be computed explicitly. This model easily extends to general service times \cite{BS1983}. Numerous other queueing games have been studied over the years and we refer interested readers to the books by Hassin and Haviv \cite{HH2003} and Hassin \cite{H2016} for complete overview and surveys.
Recently, Haviv and Oz \cite{HO2021} suggested a unified approach to formulate unobservable queueing games by analyzing the interactions between the (random) set of customers who visit the system during the same busy period. Their formulation is related to our work in that our SA algorithm updates the strategy in between busy periods. Broadly speaking, both works rely on the understanding that, given the strategy, all the statistical properties of the underlying queueing process are encapsulated in a single regeneration cycle.

\textbf{Stochastic optimization in queueing systems.} A comprehensive overview of the theory of stochastic approximations, which was initiated by the seminal work of Robbins and Monro \cite{RM1951}, can be found in Kushner and Yin \cite{KY2003} and in Borkar \cite{B2008}. An adaptation of the Robbins-Monro algorithm for the purpose of optimization was first presented by Kiefer and Wolfowitz \cite{KW1952}, who laid the groundwork for the rapid development of stochastic optimization techniques. Among their notably wide range of applications, these techniques have been applied to problems of optimizing the steady-state performance of queueing systems, to which we give emphasis next. 

The computation of an optimal service rate for a single-server queue using  stochastic approximations was initially studied by Suri and Zazanis \cite{SZ1988} where the gradient estimation problem is addressed via the method of Infinite Perturbation Analysis. Empirical experiments for the M/M/1 queue (building on the machinery developed in \cite{SZ1988}) were carried  by Suri and Leung \cite{SL1989}. Fu \cite{F1990} later extended this theory to the GI/G/1 queue, exploiting the regenerative structure underlying it. Further convergence properties are given for the GI/G/1 model in L'Ecuyer and Glynn \cite{LG1994}, for different estimators of the gradient such as finite-difference and likelihood-ratio estimators. A numerical study of the techniques discussed in \cite{LG1994} for the M/M/1 case can be found in L'Ecuyer et. al \cite{LGG1994}. A review of optimization techniques using simulation, including applications to GI/G/1 queues, is provided by Fu \cite{F1994}, and a unified framework for stochastic optimization of the steady-state performance in regenerative systems is established in Andradóttir \cite{A1996}. In all these settings the objective function is continuously differentiable in the parameter and the focus is on estimating the gradient towards the ultimate goal of computing its root. Our setting is different in that we do not focus on optimizing a given objective, but rather on identifying Nash equilibrium solutions -- a problem that is not dealt with in this line of literature. 

Recently, Chen et. al \cite{CLH2020} proposed an SA algorithm for jointly optimizing price and capacity in a GI/G/1 queue, and formulated conditions for convergence to the optimal solution. 
Chen et. al  \cite{CLH2020} further perform thorough analysis of the regret, drawing connections between stochastic approximations and reinforcement learning in the context of revenue management in queues. Reinforcement learning has been employed in recent studies to approach optimal control policies as a means of optimizing steady-state performance in queueing networks, by Liu et. al \cite{LXM2019} and by Dai and Gluzman \cite{DG2021}.

\textbf{Simulation and learning in queueing games.} \textcolor{black}{Stochastic approximation and adaptive simulation algorithms have been discussed in different branches of Game Theory. In particualr, some SA schemes were shown to converge to Nash equilibrium in certain classes of stochastic games with repeated interaction and decision making (see, for example, \cite{BH1999}, and \cite[Ch.~10.4]{B2008}).} However, in the area of strategic queueing, literature involving simulation-based methods to approach equilibrium solutions is rather scarce. Altman and Shimkin \cite{AS1998} explicitly analyze an observable processor-sharing queue with strategic customers, exponential services, and linear waiting cost. They  suggest an ad-hoc simulation-based learning algorithm, to demonstrate the convergence of customers' iterative decision-making process to a symmetric Nash equilibrium. This was extended to a model with heterogeneous customers in Ben Shahar et al. \cite{BOS2000}. Buche and Kushner \cite{BK2000} verify convergence of the algorithm presented in \cite{AS1998} to a Nash equilibrium and explain how it extends to non-linear cost functions. In the context of customers strategically timing their arrivals to a transient queueing processes, Sakuma et. al \cite{SMF2020} construct a heuristic dynamic algorithm to approximate the Nash equilibrium arrival strategy. We note, however, that the framework of strategic arrival scheduling in queues is significantly different than ours -- in the former, the focus is on transient queueing processes, and additionally, customer action space is usually assumed continuous (see more details in \cite{HR2021}).

Indirectly related to our work is the study of best-response dynamics; iterative updating of strategies by maximizing utility given the strategy in the previous iteration. Best-response algorithms are known to converge in some queueing games, such as S-modular games as defined by Yao \cite{Y1995}. For example, the S-modular framework was applied to a problem of decentralized control of a wireless network by Altman and Altman \cite{AA2003}. However, in most settings the known conditions required for convergence of best-response dynamics are not satisfied, and typically cannot even be tested due to the intractability of the stationary performance measures of the system. 

\textbf{Potential applications.} In the queueing game we introduce, customers are modeled as short-lived entities who arrive according to a renewal process to a general, regenerative system, and choose one out of finitely many actions, with the objective of maximizing their utility. Ever since Naor's \cite{N1969} seminal work, this modeling structure has been accepted as a standard approach to study customer behavior in service systems, and has been applied widely in the operations-management literature. While in \cite{N1969} the existence of a dominant  strategy deems the equilibrium analysis trivial, here we focus our attention on cases where the equilibrium search problem is non-degenerated. Below is a noninclusive list of few representative examples of such models.

Among the classical problem themes in the study of strategic queueing are join-or-balk decisions in observable and unobservable queues \cite{EH1975, BE2007, DTVW2008, GH2011, K2011}; decentralized selective routing in queueing networks \cite{BS1983, H1996, PK2004, HS2017}; paying to reduce wait via priority or premium service classes \cite{AY1974, HH1997, WCW2019, CWY2020}; and provision and  acquisition of information in queues \cite{GZ2007, XH2013, HRG2017, HS2021, HRG2021}.
Concrete applications that motivate the study of such models traditionally include communication and computer information services \cite{M1985, AS1998, MZ2003, JMMZ2012} and transportation networks \cite{MEK2014, MCK2017}. Some very recent business applications that are increasingly emerging in this line of literature also extend across ride-sharing platforms \cite{T2018, JRG2021, HWW2020, CFMNY2020}; food delivery and curbside pickup in restaurants \cite{CHW2022, SLY2020}; as well as omnichannel services \cite{BCL2022, RGY2020, GBL2020, CFMNY2020}.

In many of these papers, solving for customer equilibrium is a preliminary, yet a crucial step towards answering questions concerning pricing, policy selection, capacity planning and the like. Due to the intricacy of the equilibrium problem, the majority of these examples aim to capture the main characteristics of the equilibrium behavior through parsimonious models, on which direct analysis can be carried out. Two exceptions here are \cite{XH2013}, who adopt a mean-field approach, and \cite{MZ2003} who resort to heavy-traffic approximations (in the Halfin-Whitt regime). In \cite{MZ2003}, the authors further highlight the need for exhaustive simulations in tackling the pre-limit analog of their model.
The framework we study is fairly general, covers both observable and unobservable games, and is easily extended to multiple (finitely many) customer types (see, e.g., Appendix \ref{sec:heterogeneous-customers}). It is rich enough to capture the customer-behavior model employed in the examples listed above (with the restriction to finite heterogeneity types when considered). Thus, our results potentially offer a useful method to approach many of these examples' variants and extensions that are not amenable to direct analysis.

\subsection{Notation}\label{sec:notation}
In the paper, the domain of interest over which we define our problem is $\mathbb{R}^k$, $k>1$. Thus, we designate vectors in $\mathbb{R}^k$ by bold letters, and so we do for functions whose image is in $\mathbb{R}^k$. We denote by $\mathbf{e}_i, i=1,\dots,k$, the standard-basis unit vector with $1$ in its $i$-th coordinate, and denote by $\mathbf{e}$ the all-1 $k$-dimensional vector. By default, we define vectors as column vectors and use $'$ to denote their transpose. The non-negative real half line is denoted by $\mathbb{R}_+$. Given a function $g$ with a domain in $\mathbb{R}$ we denote its limit from the left at $t$ by $g(t-)=\lim_{s\uparrow t}g(s)$. For a real vector space, $\Vert \cdot \Vert$ is used by default to refer to the $L^2$-norm, and for $p\in[1,\infty)\cup\{\infty\}$, $\Vert \cdot \Vert_p$ denotes the $L^p$-norm. We further use $\Vert \cdot \Vert_0$ to denote the $L^0$-``norm'', i.e., the number of non-zero elements. For any non-empty set $\mathcal{S}$ and a point $x$ both defined in a real vector space we denote by $\pi_\mathcal{S}(x)$ the projection of $x$ onto $\mathcal{S}$, namely, $\pi_\mathcal{S}(x)=\argmin_{y\in\mathcal{S}} \Vert x-y\Vert$. With a slight abuse of notation, when $\pi_\mathcal{S}(x)$ is a singleton we will refer to it as a point in $\mathcal{S}$.
For $k$ being the dimension of the problem, we denote by $\Delta$ the $(k-1)$-simplex (i.e., the unit simplex with $k$ vertices), and its relative interior by $\Delta^\mathrm{o}$; $\Delta^\mathrm{o}=\{\mathbf{x}+\theta\mathbf{y} \mid \mathbf{x,y}\in\Delta, 0<\theta<1\}$.  
Given a probability space we denote by $\mathit{1}(A)$ the r.v. representing the indicator of an event $A$. For a random variable $X$, $\sigma(X)$ is the $\sigma$-algebra generated by $X$. The symbols $\asarrow$, $\parrow$ and $\darrow$ indicate convergence of a sequence of random variables almost surely, in probability, and in distribution, respectively. Almost-sure convergence of a random sequence $X_n$ to a set $\mathcal{S}$, denoted by $X_n\asarrow \mathcal{S}$, implies that $\inf_{x\in\mathcal{S}}\Vert X_n-x\Vert\asarrow 0$ as $n\to\infty$.
When comparing random variables, $\leq_{\rm st}$ symbolizes inequality in first-order stochastic dominance.

\section{Model and preliminaries}\label{sec:model}
We begin by introducing a general formulation of an unobservable queueing game. By unobservable, we mean that each customer commits to a specific action (which is possibly chosen randomly) prior to observing any information about the system's state. However, we treat the concept of an action here in rather general abstraction, which allows us to cover many observable models as well
(see discussion in \ref{sec:comment-obs}).
Later, in Section \ref{sec:observable}, we will allow customers to rely on state-information in their decisions, thereby extending our current formulation, providing a more refined treatment of observable queueing games. \textcolor{black}To ease the exposition, we assume throughout the paper that customers are homogeneous, however the general methods can be generalized to deal with finitely many heterogeneity types of customers through only incremental changes. {An instructive example of a model with multiple types of customers is given in Appendix \ref{sec:heterogeneous-customers}.}

 \textbf{The model.} Consider a service system in which the state at time $t$ represents some (possibly multi-dimensional) buffer content, taking values in a state space $\mathcal{X}\subset \mathbb{R}^d_+$. Potential customers arrive at the system according to a renewal process with inter-arrivals $\{A_n\}_{n\geq 1}$ (i.e., $A_n$ are iid), where we interpret $A_n$ as the time between the arrivals of the $(n-1)$-st and the $n$-th customers. The arrival epoch of the $n$-th customer, $n\geq 1$, is therefore given by $T_n=\sum_{i=1}^{n} A_i$. 

Every arriving customer chooses one out of $k\geq 2$ possible actions, with the action set denoted by $\mathcal{A}=\{a_1, \dots, a_k\}$. Thus, the $(k-1)$-simplex $\Delta$ represents the set of possible strategies (namely, distributions over $\mathcal{A}$). Given a strategy $\mathbf{p}\in \Delta$, we denote its $i$-th coordinate by $p_i$ which is the probability assigned to action $a_i$. 

We denote by $\{X(t; \mathbf{p})\}_{t\geq 0}$ the stochastic process representing the state of the system at time $t$ when the strategy employed by customers is $\mathbf{p}$. By convention, $\{X(t; \mathbf{p})\}_{t\geq 0}$ is assumed to be right-continuous with left-hand limits. The evolution of $X(t; \mathbf{p})$ depends on the strategy $\mathbf{p}$ played by the customers, however, in the general setup we suppress the specifics of how new arrivals and their corresponding actions change the state of the buffer content because that depends on the specific application. 

\begin{example}\label{example:MG1}
To generate intuition, one can think of the canonical Unobservable M/G/1 queue (see \cite{BS1983}), in which customers choose whether or not to join a single-server queue, thus $k=2$. For concreteness, we assume that $\lambda < \mu$, where $\lambda$ is the Poisson arrival rate and $1/\mu$ is the mean service time. In this example, a strategy is characterized by $\mathbf{p}=(p_1, p_2)=(p,1-p) \in \Delta$ for some $p\in[0,1]$, prescribing the joining and balking probabilities, $p$ and $1-p$, respectively. The content process $X(t; \mathbf{p})$ corresponds to the workload (or virtual waiting time), hence when a customer arrives at the system, with probability $p$ they join the queue and add a random job size (with mean $1/\mu$) to the workload.   \hfill $\diamond$ 
\end{example}

Let $X_n(\mathbf{p})=X(T_n-;\mathbf{p})$ denote the system state just before the $n$-th arrival given that customers adopt the strategy $\mathbf{p}$, and further assume that $X_1(\mathbf{p})=0^d$. In other words the system starts empty, so that the first customer arrives (at time $T_1$) to an empty system. The strategy $\mathbf{p}\in \Delta$ defines a probability measure $\mathbb{P}_\mathbf{p}$ for $\{X_n(\mathbf{p})\}_{n\geq 1}$. Expectation with respect to the measure $\mathbb{P}_\mathbf{p}$ is denoted by $\E_\mathbf{p}$. Define the \emph{cycle-length} r.v. $L(\mathbf{p})=\inf\{n\geq 1\mid  X_{n+1}(\mathbf{p})=0^d\}$. With the assumption $X_1(\mathbf{p})=0^d$, $L(\mathbf{p})$ is a r.v. describing the number of arrivals during a typical regenerative cycle, provided customers play according to $\mathbf{p}$. Note that separate cycle lengths are iid, that is, for any $n_1, n_2, \dots$ satisfying $X_{n_i}(\mathbf{p})=0^d$, $i=1,2,\dots$, the sequence $\inf\{n\geq 1\mid X_{n_i+n+1}(\mathbf{p})=0^d\}$ for $i=1,2,\dots$ consists of iid random variables. 
Let $\ell(\mathbf{p})=\E_\mathbf{p}L(\mathbf{p})$ denote the mean cycle length when the strategy is given by $\mathbf{p}$, and let $\ell^r(\mathbf{p})=\E_\mathbf{p}L^r(\mathbf{p})$ denote its $r$-th moment. By embedding our system at arrival epochs we impose that $L(\mathbf{p})\geq 1$ with probability 1, therefore $\ell(\mathbf{p})\geq 1$ for every $\mathbf{p}\in\Delta$. 

Considering the strategy $\mathbf{p}$ as given, it is known that if the first moment of the cycle length is finite, i.e., if $\ell(\mathbf{p})<\infty$, then $\{X_n(\mathbf{p})\}_{n\geq 1}$ is positive Harris recurrent; for further details see \cite[Ch.~VI]{A2003}. This implies $X_n(\mathbf{p})\darrow X(\mathbf{p})$ as $n\to\infty$, where $X(\mathbf{p})$ is a random variable corresponding to the stationary distribution at arrival times. Note that $X(\mathbf{p})$ may be different from the stationary, \emph{time-averaged} distribution of $X(t;\mathbf{p})$. The conditions for convergence of the algorithm presented below  demand that the cycle length has a finite second moment, implying that its first moment, $\ell(\mathbf{p})$, is also finite.

The utility of a customer depends on the system state at the time of their arrival, their chosen action $a_i$ and possibly some random outcome. Formally, given a \emph{realization} of the state $x\in\mathcal{X}$ and a realization of the random outcome $y$, we let $v_i(x, y)$, $i=1,\dots,k$ denote the value associated with action $a_i$. We define
\begin{equation}\label{eq:vXY}
\mathbf{v}(x, y)=\big( v_1(x, y), \dots, v_k(x, y) \big).
\end{equation}
The function $\mathbf{v}$ is considered a model's primitive.
Note that the random outcome may depend on the action chosen, in which case $y$ can be modeled as a vector $y\in\mathbb{R}^k$ with each coordinate corresponding to an action $a_i\in\mathcal{A}$. The dimension of $y$ is in fact irrelevant to our analysis and for the sake of simplicity $y$ can be thought of as single valued. The function $\mathbf{v}$, as well as the distribution of the random outcome, are allowed to depend on the strategy $\mathbf{p}$, but for brevity we suppress this dependence in the notation.

Our general interest is in treating the state and the random outcome as (possibly dependent) random variables, say $X$ and $Y$. The value vector $\mathbf{v}(X, Y)$ then constitutes a $k$-dimensional random variable. We introduce the vector of expected stationary values as
\begin{equation}\label{eq:up}
\mathbf{u}(\mathbf{p})=\E_\mathbf{p} \bigg[ \mathbf{v}\big(X(\mathbf{p}), Y\big)  \bigg],
\end{equation}
where we recall that $X(\mathbf{p})$ is the random stationary state given $\mathbf{p}$. We assume throughout that a stationary distribution exists for every $\mathbf{p}\in\Delta$, and furthermore that the function $\mathbf{v}(X, Y)$ is integrable with respect to the measure corresponding to that stationary distribution. This ensures that $\mathbf{u(p)}$ is well defined given $\mathbf{p}$, thus, it describes a function $\mathbf{u}:\Delta\to\mathbb{R}^k$; For each $i$, $u_i(\mathbf{p})$ takes the interpretation of the mean utility of some ``controlled'' customer who arrives at a stationary system and is told to play $a_i$. To clarify, we highlight that $\mathbf{u(p)}$ does \emph{not} represent the (single-valued) average utility obtained when customers employ the strategy $\mathbf{p}$ -- the latter can be expressed as $\mathbf{u(p)}'\mathbf{p}$.

\setcounter{example}{0}
\begin{example}\textbf{{\rm (Continued)}}
Consider once again the example of the unobservable M/G/1, with arrival and service rates $\lambda$ and $\mu$, respectively, satisfying $\lambda < \mu$. As explained, a strategy is given by $\mathbf{p} = (p, 1-p)$ where $p\in[0,1]$ depicts the joining probability, thus, we interpret $X(\mathbf{p})$ as the stationary virtual workload at arrival epochs associated with customers joining at rate $p\lambda$, and $Y$ is the service time of a tagged arrival. The possible actions are joining ($a_1$) or balking ($a_2$), hence $v_1(x,y)=R-C\cdot(x+y)$ for $R$ and $C$ being the reward from service and cost for unit of delay, and $v_2(x,y)=0$. In addition, letting $w(\cdot)$ denote the mean virtual workload in the system as a function of the arrival rate, we have $\E_\mathbf{p} [ X(\mathbf{p}) ]=w(p\lambda)$ and therefore  $\mathbf{u}(\mathbf{p}) = (R-C\cdot(w(p\lambda)+\mu^{-1}), 0)$.  \hfill$\diamond$
\end{example}

As in Example 1, it is oftentimes the case that $Y$ and $X(\mathbf{p})$ are independent. However, in general the distribution of $Y$ may depend on the observed state, $x$, as well as on the strategy of others, $\mathbf{p}$. One classic example is a processor-sharing system in which the waiting time depends on the strategy of future arrivals (see \cite{AS1998}). 

 \textbf{Equilibrium strategy.} Next, we define an equilibrium, the desired solution concept at the center of attention in this work.

\begin{definition}
The best-response set for a strategy $\mathbf{p}\in\Delta$ is the set 
\[\mathcal{BR}(\mathbf{p})=\argmax_{\mathbf{q}\in\Delta} \mathbf{u(p)}'\mathbf{q}.\]
\end{definition}
For a strategy $\mathbf{p}\in\Delta$, each of the elements of $\mathcal{BR}(\mathbf{p})$ is commonly termed a \emph{best-response strategy} (or \emph{best response} in short) for $\mathbf{p}$. Thus, $\mathbf{p}$ is a symmetric equilibrium if it is a best response to itself, or synonymously, if it is a fixed point of $\mathcal{BR}$, when the latter is viewed as a set-valued map, $\mathcal{BR}:\Delta\to 2^\Delta$. Hence,
\begin{definition}\label{def:SNE}
A Symmetric Nash Equilibrium strategy is a strategy $\mathbf{p}\in\Delta$ such that 
\[\mathbf{p}\in \mathcal{BR}(\mathbf{p}) = \argmax_{\mathbf{q}\in\Delta} \mathbf{u(p)}'\mathbf{q}.\]
\end{definition}
 \noindent Nonetheless, describing an equilibrium as a fixed point of the best-response function $\mathcal{BR}$ does not yield any simple method to compute it in general. This is because $\mathcal{BR}$ is a set-valued mapping in a continuous space, and is naturally hard to work with. Moreover, even if $\mathcal{BR}$ is a singleton almost everywhere in the domain $\Delta$ and continuous at every such point, it is often the case that the fixed point $\mathbf{p}^e\in\mathcal{BR}(\mathbf{p}^e)$ is a point of jump discontinuity, in the sense that the limits approaching $\mathbf{p}^e$ from different directions do not agree.
\setcounter{example}{0}
\begin{example}\textbf{{\rm (Continued)}}
In the unobservable M/G/1, recall that a strategy is given by $\mathbf{p} = (p, 1-p)$ with $p\in[0,1]$ and that $\E_\mathbf{p} [ X(\mathbf{p}) ]=w(p\lambda)$ which is the mean virtual workload when the arrival rate is $p\lambda$. Suppose that $R,C$ and $\mu$ are such that $0< R/C -1/\mu < w(\lambda)$. Then there exists a unique equilibrium joining probability $p^e \in (0,1)$, which is characterized by the unique solution to the equation $R-C\cdot(w(p^e \lambda)+1/\mu)=0$, and the corresponding equilibrium strategy is $\mathbf{p}^e=(p^e, 1-p^e)$. The best-response function (plotted in Figure~\ref{fig:MG1_BR}) for a strategy $\mathbf{p} = (p, 1-p)$ takes the form:
\[
\mathcal{BR}(\mathbf{p}) = \begin{cases} \mathbf{e}_1 \quad &\mbox{if } p<p^e;\\
\Delta &\mbox{if } p= p^e;\\
\mathbf{e}_2 &\mbox{if } p>p^e.
\end{cases}
\]
Denoting $\tilde{\mathbf{e}}=\mathbf{e}_1-\mathbf{e}_2$, it can be seen that for any $\epsilon>0$, 
\[\Vert \mathcal{BR}(\mathbf{p}^e+\epsilon\tilde{\mathbf{e}}) -  \mathcal{BR}(\mathbf{p}^e-\epsilon\tilde{\mathbf{e}}) \Vert  = \Vert \mathbf{e}_1-\mathbf{e}_2 \Vert =\Vert \tilde{\mathbf{e}} \Vert =\sqrt{2}.\] 
\hfill$\diamond$ 
\end{example}
Example \ref{example:MG1} shows that the set-valued mapping $\mathcal{BR}$ in general is not lower-hemicontiuous. Thus, even in the fundamental unobservable M/G/1 setting, it is not obvious how to approach the equilibrium strategy based on fixed-point iterations.
To overcome this difficulty, we define next a modified version of the best-response function, which is a vector-valued function, 
$\mathbf{f}:\Delta \to\Delta $, expressed as:
\begin{equation}
\mathbf{f}(\mathbf{p})= \pi_\Delta\big( \mathbf{p}+\mathbf{u(p)}\big), \label{EQN:f-def}
\end{equation} 
recalling that $\pi_\Delta(\mathbf{x})$ denotes the projection of $\mathbf{x}\in\mathbb{R}^k$ onto the simplex $\Delta$. This vector-valued function can replace the raw definition of $\mathcal{BR}$ for our purpose, and in addition, under appropriate conditions on the primitives, also possesses desirable attributes like continuity and smoothness.

The intuition behind the definition of $\mathbf{f}$ is that $\mathbf{f}(\mathbf{p})$ maps $\mathbf{p}$ to a strategy obtained by deviating from $\mathbf{p}$ in the direction of one of its (actual) best responses. This can be noticed observing that $\mathbf{p}+\mathbf{u(p)}$ is the unique solution of the following (convex) optimization problem:
\[ 
\max_{\mathbf{q}\in\mathbb{R}^k} \left\{\mathbf{u(p)}'\mathbf{q}- \frac{1}{2}\Vert \mathbf{p-q}\Vert^2 \right\}.
\]
Whenever the best response for $\mathbf{p}$ is not unique, $\mathbf{f(p)}$ ``pushes'' $\mathbf{p}$ in the direction of the best response that is the closest to $\mathbf{p}$ in the standard euclidean sense (note that the projection of $\mathbf{p}$ onto $\mathcal{BR}(\mathbf{p})$ is unique because the latter is a convex polyhedron in $\mathbb{R}^k$). 
It is intuitive therefore that a strategy $\mathbf{p}$ is a symmetric equilibrium if and only if it is a fixed point of $\mathbf{f}$.  This statement is rigorously formulated in the next lemma, whose proof appears in Appendix \ref{lem:equilibrium-equivalence-proof}:
\begin{lemma}\label{lem:equilibrium-equivalence} 
A strategy $\mathbf{p}\in\Delta$ is a Symmetric Nash Equilibrium, i.e., $\mathbf{p} \in \mathcal{BR}(\mathbf{p})$, if and only if it satisfies $\mathbf{p}=\mathbf{f}(\mathbf{p})$.
\end{lemma}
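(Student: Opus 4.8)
The plan is to reduce both halves of the claimed equivalence to a single variational inequality, namely
\[
\mathbf{u(p)}'(\mathbf{q}-\mathbf{p})\le 0 \quad \text{for all } \mathbf{q}\in\Delta,
\]
and then to observe that each of the two conditions in the lemma is equivalent to it. The one tool I need is the standard first-order (variational) characterization of the Euclidean projection onto a closed convex set: for closed convex $K\subseteq\mathbb{R}^k$ and a point $x$, the nearest point $z=\pi_K(x)$ is the \emph{unique} element of $K$ satisfying $(x-z)'(\mathbf{q}-z)\le 0$ for every $\mathbf{q}\in K$. Since $\Delta$ is compact and convex, this applies with $K=\Delta$.

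First I would treat the direction $\mathbf{p}=\mathbf{f}(\mathbf{p})\Rightarrow \mathbf{p}\in\mathcal{BR}(\mathbf{p})$. Writing $\mathbf{f}(\mathbf{p})=\pi_\Delta(\mathbf{p}+\mathbf{u(p)})$ and substituting the hypothesis $z:=\mathbf{f}(\mathbf{p})=\mathbf{p}$ into the projection inequality with $x=\mathbf{p}+\mathbf{u(p)}$, the vector $x-z$ collapses to $\mathbf{u(p)}$, so the inequality reads exactly $\mathbf{u(p)}'(\mathbf{q}-\mathbf{p})\le 0$ for all $\mathbf{q}\in\Delta$. Rearranging yields $\mathbf{u(p)}'\mathbf{q}\le \mathbf{u(p)}'\mathbf{p}$ for every $\mathbf{q}\in\Delta$, i.e. $\mathbf{p}\in\argmax_{\mathbf{q}\in\Delta}\mathbf{u(p)}'\mathbf{q}=\mathcal{BR}(\mathbf{p})$.

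For the converse, $\mathbf{p}\in\mathcal{BR}(\mathbf{p})\Rightarrow\mathbf{p}=\mathbf{f}(\mathbf{p})$, I would start from the optimality condition defining $\mathcal{BR}$: that $\mathbf{p}$ maximizes the linear functional $\mathbf{q}\mapsto\mathbf{u(p)}'\mathbf{q}$ over the polytope $\Delta$ is precisely the statement $\mathbf{u(p)}'(\mathbf{q}-\mathbf{p})\le 0$ for all $\mathbf{q}\in\Delta$. Since $(\mathbf{p}+\mathbf{u(p)})-\mathbf{p}=\mathbf{u(p)}$, this is exactly the projection inequality certifying that $\mathbf{p}$ is the nearest point of $\Delta$ to $\mathbf{p}+\mathbf{u(p)}$; by uniqueness of the projection onto a convex set, $\mathbf{p}=\pi_\Delta(\mathbf{p}+\mathbf{u(p)})=\mathbf{f}(\mathbf{p})$. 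Because both implications pass through the same inequality, the two directions are essentially mirror images.

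I do not anticipate a genuine obstacle: the only points requiring care are to invoke the closest-point/variational form of the projection (rather than any closed-form expression for $\Delta$), and to note that $\mathcal{BR}(\mathbf{p})$, being the $\argmax$ of a linear functional over a convex polytope, is captured \emph{exactly} by this first-order condition, with no need for strict inequality or interiority. The surrounding remark in the text — that $\mathbf{p}+\mathbf{u(p)}$ solves the unconstrained quadratic $\max_{\mathbf{q}}\{\mathbf{u(p)}'\mathbf{q}-\tfrac12\Vert\mathbf{p}-\mathbf{q}\Vert^2\}$ and that projecting recovers the constrained maximizer over $\Delta$ — could serve as an alternative entry point, but the direct projection argument above is the most economical route.
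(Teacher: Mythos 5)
Your proof is correct. Both directions reduce cleanly to the variational inequality $\mathbf{u(p)}'(\mathbf{q}-\mathbf{p})\le 0$ for all $\mathbf{q}\in\Delta$, and the standard nearest-point characterization of the projection onto a closed convex set does exactly what you claim. (The paper's notation section writes $\pi_{\mathcal S}(x)=\argmax_{y\in\mathcal S}\Vert x-y\Vert$, an evident typo for $\argmin$; you correctly read it as the nearest point.)

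The route differs from the paper's in packaging rather than substance. The paper proves both implications by bare-hands norm expansion: for the forward direction it expands $\Vert \mathbf{p}+\mathbf{u(p)}-\mathbf{q}\Vert^2 = \Vert\mathbf{p}-\mathbf{q}\Vert^2 + 2\mathbf{u(p)}'(\mathbf{p}-\mathbf{q}) + \Vert\mathbf{u(p)}\Vert^2$ and concludes every $\mathbf{q}\neq\mathbf{p}$ is strictly farther; for the converse it argues by contraposition, picking $\mathbf{q}$ with $\mathbf{u(p)}'(\mathbf{q}-\mathbf{p})>0$ and constructing an explicit convex combination $\tilde{\mathbf{q}}=(1-\theta)\mathbf{p}+\theta\mathbf{q}$ with $\theta$ chosen small enough that $\tilde{\mathbf{q}}$ lies strictly closer to $\mathbf{p}+\mathbf{u(p)}$ than $\mathbf{p}$ does. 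Your appeal to the first-order characterization of $\pi_\Delta$ subsumes both computations: it makes the two directions exact mirror images of the same inequality and eliminates the $\theta$-construction entirely, at the cost of invoking (or having to prove) the projection theorem. The paper's version is self-contained and elementary; yours is shorter and makes the equivalence transparent. Either is acceptable.
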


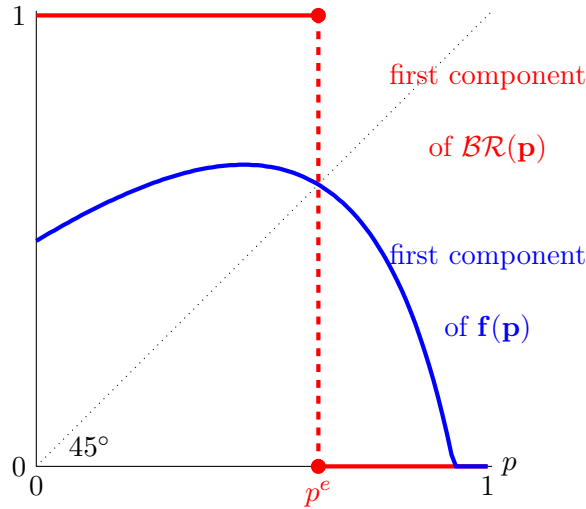
\begin{figure}[H]
\centering
\begin{tikzpicture}[xscale=6,yscale=6]
  \def\xmin{0}
  \def\xmax{1.01}
  \def\ymin{0}
  \def\ymax{1.01}
    \draw[-] (\xmin,\ymin) -- (\xmax,\ymin) node[right] {$p$} ;
    \draw[-] (\xmin,\ymin) -- (\xmin,\ymax);
    \draw[dotted] (\xmin,\ymin) -- (\xmax,\ymax);
    \node[draw=none, above right ] at (\xmin+.05,\ymin) {$45^\circ$};
    \foreach \x in {0,1}
    \node at (\x,\ymin) [below] {\x};
    \foreach \y in {0,1}
    \node at (\xmin,\y) [left] {\y};
    \draw[red, ultra thick] (0,1)--(0.625,1) node[minimum size=0.2cm,inner sep=0pt,circle,red,fill]{};
    \draw[red, ultra thick, dashed] (0.625,1)--(0.625,0) node[minimum size=0.2cm,inner sep=0pt,circle,red,fill]{};
    \draw[red, ultra thick] (0.625,0)--(1,0);
    \node[draw=none, below, red] at (0.625,-0.01) {$p^e$};
    \node[draw=none,right,color=red] at (0.725,0.8)  { \begin{tabular}{c} first component \\ of $\mathcal{BR}(\mathbf{p})$\end{tabular} };
    
        \node[draw=none,right,color=blue] at (0.725,0.4)  { \begin{tabular}{c} first component \\ of $\mathbf{f}(\mathbf{p})$\end{tabular} };
    
    \draw[blue,  ultra thick] ( 0 , 0.5 )-- ( 0.01 , 0.506 )-- ( 0.02 , 0.512 )-- ( 0.03 , 0.518 )-- ( 0.04 , 0.523 )-- ( 0.05 , 0.529 )-- ( 0.06 , 0.535 )-- ( 0.07 , 0.54 )-- ( 0.08 , 0.546 )-- ( 0.09 , 0.551 )-- ( 0.1 , 0.557 )-- ( 0.11 , 0.562 )-- ( 0.12 , 0.567 )-- ( 0.13 , 0.572 )-- ( 0.14 , 0.577 )-- ( 0.15 , 0.582 )-- ( 0.16 , 0.587 )-- ( 0.17 , 0.591 )-- ( 0.18 , 0.596 )-- ( 0.19 , 0.6 )-- ( 0.2 , 0.605 )-- ( 0.21 , 0.609 )-- ( 0.22 , 0.613 )-- ( 0.23 , 0.617 )-- ( 0.24 , 0.621 )-- ( 0.25 , 0.625 )-- ( 0.26 , 0.629 )-- ( 0.27 , 0.632 )-- ( 0.28 , 0.636 )-- ( 0.29 , 0.639 )-- ( 0.3 , 0.642 )-- ( 0.31 , 0.645 )-- ( 0.32 , 0.648 )-- ( 0.33 , 0.651 )-- ( 0.34 , 0.653 )-- ( 0.35 , 0.656 )-- ( 0.36 , 0.658 )-- ( 0.37 , 0.66 )-- ( 0.38 , 0.662 )-- ( 0.39 , 0.663 )-- ( 0.4 , 0.665 )-- ( 0.41 , 0.666 )-- ( 0.42 , 0.667 )-- ( 0.43 , 0.668 )-- ( 0.44 , 0.668 )-- ( 0.45 , 0.669 )-- ( 0.46 , 0.669 )-- ( 0.47 , 0.669 )-- ( 0.48 , 0.668 )-- ( 0.49 , 0.668 )-- ( 0.5 , 0.667 )-- ( 0.51 , 0.665 )-- ( 0.52 , 0.664 )-- ( 0.53 , 0.662 )-- ( 0.54 , 0.66 )-- ( 0.55 , 0.657 )-- ( 0.56 , 0.654 )-- ( 0.57 , 0.651 )-- ( 0.58 , 0.647 )-- ( 0.59 , 0.643 )-- ( 0.6 , 0.638 )-- ( 0.61 , 0.633 )-- ( 0.62 , 0.628 )-- ( 0.63 , 0.622 )-- ( 0.64 , 0.615 )-- ( 0.65 , 0.608 )-- ( 0.66 , 0.601 )-- ( 0.67 , 0.592 )-- ( 0.68 , 0.584 )-- ( 0.69 , 0.574 )-- ( 0.7 , 0.564 )-- ( 0.71 , 0.553 )-- ( 0.72 , 0.541 )-- ( 0.73 , 0.528 )-- ( 0.74 , 0.515 )-- ( 0.75 , 0.5 )-- ( 0.76 , 0.484 )-- ( 0.77 , 0.468 )-- ( 0.78 , 0.45 )-- ( 0.79 , 0.431 )-- ( 0.8 , 0.411 )-- ( 0.81 , 0.39 )-- ( 0.82 , 0.367 )-- ( 0.83 , 0.342 )-- ( 0.84 , 0.316 )-- ( 0.85 , 0.287 )-- ( 0.86 , 0.257 )-- ( 0.87 , 0.225 )-- ( 0.88 , 0.191 )-- ( 0.89 , 0.154 )-- ( 0.9 , 0.114 )-- ( 0.91 , 0.072 )-- ( 0.92 , 0.026 )-- ( 0.93 , 0 )-- ( 0.94 , 0 )-- ( 0.95 , 0 )-- ( 0.96 , 0 )-- ( 0.97 , 0 )-- ( 0.98 , 0 )-- ( 0.99 , 0 )-- ( 1 , 0 );
\end{tikzpicture}
\caption{The first component of the best-response function  $\mathcal{BR}(\mathbf{p})$ (red), and the first component of the best-response surrogate  $\mathbf{f(p)}$ (blue), for the unobservable M/G/1 model. At $p^e$ customers are indifferent between joining and balking, thus, every joining probability $p\in[0,1]$ constitutes a best-response strategy, $(p, 1-p)\in\mathcal{BR}(\mathbf{p}^e)$. The red curve's left and right limits at $p^e$ do not coincide. The blue curve is a single-valued continuous function over $[0,1]$ and is smooth at $p^e$. }\label{fig:pp_BR_a}
\label{fig:MG1_BR}
\end{figure}

In Figure \ref{fig:MG1_BR}, both the best-response function, $\mathcal{BR}$, and its surrogate, $\mathbf{f}$, are illustrated for the unobservable M/G/1 model of Example \ref{example:MG1}. It can be seen that both functions admit a unique fixed point at the equilibrium strategy. However, $\mathcal{BR}$ is discontinuous (in the sense of lower-hemicontinuity) at that point, while $\mathbf{f(p)}$ is indeed continuous. 

\begin{remark}\label{rem:choice-of-ell}
The result of Lemma \ref{lem:equilibrium-equivalence} would still prevail if we extended
our definition of $\mathbf{f(p)}$ to $\pi_\Delta\big( \mathbf{p}+h(\mathbf{p})\mathbf{u(p)}\big)$,
for any positive real function $h:\Delta\to\mathbb{R}$. In using our suggested equilibrium-approximation scheme, it is useful to choose a function $h$ that is bounded away from 0.
\end{remark}

 \textbf{Iterative equilibrium-approximation scheme.} Equipped with Lemma \ref{lem:equilibrium-equivalence} we focus our interest on characterizing a solution to $\mathbf{p}=\mathbf{f}(\mathbf{p})$. To ensure that such a fixed point exists, we assume that $\mathbf{u}$ is continuous, in which case we have, as an implication of the maximum theorem, that $\mathbf{f}$ is continuous as well. We can then state the following lemma.

\begin{lemma}\label{lem:existence-of-equilibiurm} Assume that $\mathbf{u(p)}$ is continuous for all $\mathbf{p}\in\Delta$. Then a symmetric equilibrium strategy $\mathbf{p}^e\in\Delta$ exists, and this strategy satisfies $\mathbf{p}^e = \mathbf{f}(\mathbf{p}^e)$.
\end{lemma}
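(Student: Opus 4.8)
The plan is to reduce the existence of a symmetric equilibrium to the existence of a fixed point of the surrogate map $\mathbf{f}$, and then to invoke Brouwer's fixed-point theorem. By Lemma~\ref{lem:equilibrium-equivalence}, a strategy $\mathbf{p}\in\Delta$ is a symmetric Nash equilibrium if and only if $\mathbf{p}=\mathbf{f}(\mathbf{p})$, so it suffices to exhibit a fixed point of $\mathbf{f}$. The domain $\Delta$ is a nonempty, compact, convex subset of $\mathbb{R}^k$, and by construction $\mathbf{f}(\mathbf{p})=\pi_\Delta\big(\mathbf{p}+\mathbf{u(p)}\big)$ lands in $\Delta$; hence $\mathbf{f}:\Delta\to\Delta$ is a self-map of a compact convex set. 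Brouwer's theorem then guarantees a fixed point, provided $\mathbf{f}$ is continuous.

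The bulk of the argument is therefore to verify continuity of $\mathbf{f}$. First, the map $\mathbf{p}\mapsto\mathbf{p}+\mathbf{u(p)}$ is continuous on $\Delta$, since the identity is continuous and $\mathbf{u}$ is continuous by hypothesis. Second, the Euclidean projection $\pi_\Delta$ onto the closed convex set $\Delta$ is continuous. The cleanest way I would establish this is to recall that projection onto a nonempty closed convex set is non-expansive: for any $x,y\in\mathbb{R}^k$,
\[
\Vert \pi_\Delta(x)-\pi_\Delta(y)\Vert \leq \Vert x-y\Vert,
\]
which follows from the variational (obtuse-angle) characterization of the projection together with the Cauchy--Schwarz inequality. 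In particular $\pi_\Delta$ is $1$-Lipschitz, hence continuous, and composing the two continuous maps yields continuity of $\mathbf{f}$. Alternatively, continuity of $\mathbf{f}$ follows from Berge's maximum theorem applied to the strongly concave program $\max_{\mathbf{q}\in\Delta}\{\mathbf{u(p)}'\mathbf{q}-\tfrac12\Vert\mathbf{p-q}\Vert^2\}$, whose unique maximizer is exactly $\mathbf{f(p)}$; strong concavity makes the arg-max single-valued, so the upper-hemicontinuous correspondence delivered by the theorem is in fact a continuous function.

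With continuity in hand, Brouwer's fixed-point theorem applied to the continuous self-map $\mathbf{f}:\Delta\to\Delta$ produces a point $\mathbf{p}^e\in\Delta$ with $\mathbf{p}^e=\mathbf{f}(\mathbf{p}^e)$. By Lemma~\ref{lem:equilibrium-equivalence}, this $\mathbf{p}^e$ is a symmetric Nash equilibrium, which is exactly the claim. I do not anticipate a genuine obstacle here; the only point requiring care is the continuity of the projection, and more specifically the fact that $\pi_\Delta(x)$ is single-valued so that $\mathbf{f}$ is a bona fide function rather than a set-valued correspondence. Single-valuedness is immediate because $\Delta$ is convex and the squared-distance objective is strictly convex, giving a unique nearest point; this is also why the optimization reformulation stated just before the lemma admits a unique solution.
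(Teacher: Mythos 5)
Your proof is correct and follows essentially the same route as the paper: reduce existence to a fixed point of $\mathbf{f}$ via Lemma~\ref{lem:equilibrium-equivalence}, establish continuity of $\mathbf{f}$ (the paper cites the maximum theorem, which is your second alternative; your non-expansiveness argument for $\pi_\Delta$ is an equally valid and more elementary way to get the same continuity), and apply Brouwer's fixed-point theorem on the compact convex set $\Delta$. The only difference is that you spell out the continuity of the projection in more detail than the paper does.
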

Lemma \ref{lem:existence-of-equilibiurm} follows directly from Lemma \ref{lem:equilibrium-equivalence} and the continuity of $\mathbf{f}$ by applying Brouwer's fixed-point theorem. As a consequence of Lemma \ref{lem:existence-of-equilibiurm}, when $\mathbf{u(p)}$ is accessible and can be evaluated directly, then a simple fixed-point iteration algorithm can be used to approximate an equilibrium; To this end,
we think of $\mathbf{u(p)}$ as the vector that determines the direction of the step at each iteration, 
giving rise to the following (deterministic) adaptive-iteration scheme:
\begin{equation} \label{eq:FI_iteration}
{\mathbf{p}}^{(n+1)} = \pi_\Delta \left( {\mathbf{p}}^{(n)}+\gamma_n \mathbf{u}({\mathbf{p}}^{(n)}) \right), 
\end{equation}
where $\{\gamma_n\}_{n\geq 1}$ is a positive sequence called the \textit{step-size sequence}, and satisfies $\sum_{n= 1}^\infty \gamma_n=\infty$ (in applications, the requirement $\gamma_n\to0$ is further imposed for regulation). Assuming $\mathbf{u}(\cdot)$ is continuous, any limit of the sequence in \eqref{eq:FI_iteration} corresponds to an equilibrium strategy. 
Yet, the latter statement alone does not make any conclusion about whether or not convergence takes place, and in some cases the iterate may indeed end up oscillating between non-equilibrium strategies. 
Further regulating assumptions on $\mathbf{u}(\cdot)$ are needed (for example, Assumption \ref{assum:finite_u} below) to guarantee convergence of the algorithm.
General conditions under which \eqref{eq:FI_iteration} converges is a rather technical issue that exceeds the scope of our discussion here; some relevant information is provided in  Appendix~\ref{sec:app_convergence-proof}.
\begin{remark}
\textcolor{black}{Consider a point $\mathbf{p}\in\Delta$, and let $\gamma>0$ be an arbitrary step size. Define
$$\gamma\mathbf{z} = \pi_\Delta \left( \mathbf{p}+ \gamma\mathbf{u}(\mathbf{p}) \right) - (\mathbf{p}+ \gamma\mathbf{u}(\mathbf{p})),$$ 
which is the shortest vector needed to bring $\mathbf{p}+ \gamma\mathbf{u}(\mathbf{p})$ back into $\Delta$. Starting at $\mathbf{p}$, applying one iteration of \eqref{eq:FI_iteration} with the step size $\gamma$ results in the updated iterate $\mathbf{p} + \gamma\mathbf{u}(\mathbf{p})+\gamma\mathbf{z}$.
Thus, $\mathbf{p}$ is a fixed point for (\ref{eq:FI_iteration}) if (and only if) $\mathbf{z}=-\mathbf{u}(\mathbf{p})$, namely, 
the shortest trajectory from $\mathbf{p}+\gamma\mathbf{u}(\mathbf{p})$ back to the simplex has to be along the opposite direction of $\mathbf{u}(\mathbf{p})$. }
When $\mathbf{p} \in \Delta^\mathrm{o}$, i.e., $\mathbf{p}$ is in the (relative) interior of $\Delta$, the latter condition means that
$\mathbf{u(p})$ has to be orthogonal to the hyperplane spanned by $\Delta$. In other words, all entries of $\mathbf{u(p})$ have to be equal. This observation falls in line with the well-known \emph{indifference principle}: If $\mathbf{p}$ is a strategy such that when customers play $\mathbf{p}$, an individual customer is indifferent to choosing any action in $\mathcal{A}$, then $\mathbf{p}$ induces equilibrium. The latter argument and its intuitive interpretation can be extended to any strategy $\mathbf{p}=(p_1, \dots, p_k)$ that lies in a facet of $\Delta$, with its corresponding subset of supported actions, $\{a_i \in \mathcal{A} \mid i \text{ \rm{ s.t.} } p_i>0 \}$.
\end{remark}

Clearly, the deterministic algorithm just described is only useful when the utility function $\mathbf{u(p)}$ can be computed.
In many settings, however, the entries of $\mathbf{u(p)}$ may not be given in closed form, and even numerical approximations can be difficult to obtain. Therefore we devise a method of approaching the desired equilibrium using an SA algorithm. 

\section{Simulation and stochastic approximation}\label{sec:SA}

The stochastic approximation procedure is based on the Robbins-Monro algorithm, which aims to mimic the deterministic fixed-point iteration of \eqref{eq:FI_iteration}, replacing the progression direction $\mathbf{u(p)}$ with a noisy estimator based on a realization of  the queueing process. However, estimating $\mathbf{u(p)}$ naively by taking a sample average over a (finitely long) simulation path, often results in a biased estimator. Using such biased estimators in the SA scheme, one risks the algorithm will converge to an undesirable limit. In light of Remark \ref{rem:choice-of-ell}, we thus replace $\mathbf{u(p)}$ in \eqref{eq:FI_iteration} with $\mathbf{g(p)}=\ell(\mathbf{p})\mathbf{u(p)}$, which we can easily estimate, without bias, based on realizations of regeneration cycles.
A typical iteration of the SA algorithm then runs as follows.  At iteration $n$, assume that the strategy is $\mathbf{p}^{(n)}$, and that we are provided with an estimator $\mathbf{G}^{(n)}$ for $\mathbf{g}(\mathbf{p}^{(n)})$. Recall that $\gamma_n$ is the $n$-th step size. The update at the next iteration of the SA scheme is done by
\begin{equation}\label{eq:SA_iteration}
    \mathbf{p}^{(n+1)} = \pi_\Delta \left( \mathbf{p}^{(n)} + \gamma_n \mathbf{G}^{(n)}\right),\ n\geq 1,
\end{equation}
where $\mathbf{p}^{(1)}\in\Delta$ is an arbitrary initial strategy. 
If $\mathbf{G}^{(n)}$ is unbiased for $\mathbf{g}(\mathbf{p}^{(n)})$, then under mild regularity conditions the algorithm will converge to a fixed point of $\mathbf{f}$ (see Theorem \ref{thm:convergence}), i.e., to a Nash equilibrium. The remainder of this section is dedicated to the construction of an unbiased estimator and to presenting the necessary assumptions and steps for our main convergence result.

\subsection{Unbiased estimator}

Assume that the strategy $\mathbf{p}$ is given. To avoid using cumbersome notation we suppress in some places the dependency on $\mathbf{p}$, however all the distributions of the random variables to be defined next should be regarded as functions of $\mathbf{p}$. To construct the estimator, we simulate a single regenerative cycle starting with the arrival of a customer at an empty system; $X_1(\mathbf{p})=0^d$. As before, we denote by $L=\inf\{n\geq 1 \mid X_{n+1}(\mathbf{p})=0\}$ the number of arrivals during that cycle. For each arrival $j=1, \dots, L$, let $X_j$ be the state and $Y_j$ be the corresponding random outcome. For the $j$-th customer, the vector $\mathbf{v}(X_j, Y_j)$ as defined in \eqref{eq:vXY} reflects the potential utility that each action will yield to that customer. We clarify that calculating $\mathbf{v}(X_j, Y_j)$ for each customer $j$ requires to assess their realized value for \emph{every} action in $\mathcal{A}$ \emph{as if} they chose it, yet in the simulation process, the action taken by the $j$-th customer is fully governed by $\mathbf{p}$.

Denote the expectation of the utility conditional on observing state $X$ by
\begin{equation}\label{eq:Vx}
\mathbf{\overline{v}}(X)= \E_Y \left[\mathbf{v}\big(X, Y\big) \: \big| \: X \right].
\end{equation}
Throughout the paper we assume, for all $\mathbf{p}\in\Delta$, that $\mathbf{\overline{v}}$ is integrable with respect to the distribution of $X(\mathbf{p})$. Then the vector of expected stationary utilities, $\mathbf{u}(\mathbf{p})$, can be written as
$\mathbf{u}(\mathbf{p})=\E_\mathbf{p} \big[ \mathbf{\overline{v}}(X(\mathbf{p})) \big]$.
Define 
\begin{equation}\label{eq:G}
\mathbf{G} =  \sum_{j=1}^{L} \mathbf{\overline{v}}(X_j).
\end{equation}
The following lemma, which is a direct result of a classical formula of regenerative processes (see, for example, \cite[Prop.~A.3]{CI1975}), states that $\mathbf{G}$ is an unbiased estimator for $\mathbf{g(p)}$.
\begin{lemma}\label{lemma:unbiased_G}
Suppose that $\ell^2(\mathbf{p})<\infty$ for all $\mathbf{p}\in\Delta$, then
\begin{equation}\label{eq:EG}
\E_{\mathbf{p}}\mathbf{G}=\ell(\mathbf{p})\mathbf{u(p)}=\mathbf{g(p)}.
\end{equation}
\end{lemma}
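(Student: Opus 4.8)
The plan is to recognize equation \eqref{eq:EG} as a direct instance of the classical cycle-average (ratio) formula for regenerative sequences, applied coordinate by coordinate to the state-only reward $\overline{\mathbf{v}}$. Since $\ell^2(\mathbf{p})<\infty$ implies $\ell(\mathbf{p})<\infty$, the embedded sequence $\{X_n(\mathbf{p})\}_{n\geq 1}$ is positive Harris recurrent with iid cycles delimited by the regeneration epochs at which the state equals $0^d$, and $X_n(\mathbf{p})\darrow X(\mathbf{p})$. The reward accumulated over one cycle, $\sum_{j=1}^{L}\overline{\mathbf{v}}(X_j)$, is a functional of the states alone, so no further handling of the random outcomes $Y_j$ is needed once we record that, by the tower property, $\mathbf{u}(\mathbf{p})=\E_\mathbf{p}[\overline{\mathbf{v}}(X(\mathbf{p}))]$, as already noted above.

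First I would fix a coordinate $i\in\{1,\dots,k\}$ and verify the integrability hypothesis of the regenerative formula, namely $\E_\mathbf{p}\big[\sum_{j=1}^{L}|\overline{v}_i(X_j)|\big]<\infty$. This is where the standing assumption that $\overline{\mathbf{v}}$ is integrable with respect to the law of $X(\mathbf{p})$ enters: applying the (always valid) nonnegative version of the formula to $|\overline{v}_i|$ gives $\E_\mathbf{p}\big[\sum_{j=1}^{L}|\overline{v}_i(X_j)|\big]=\ell(\mathbf{p})\,\E_\mathbf{p}|\overline{v}_i(X(\mathbf{p}))|<\infty$, which simultaneously guarantees that $G_i$ is integrable and licenses the signed version of the identity (decomposing $\overline{v}_i=\overline{v}_i^+-\overline{v}_i^-$).

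Next I would invoke the cycle-average formula in the form of \cite[Prop.~A.3]{CI1975} for the signed functional $\overline{v}_i$, stating that the arrival-stationary expectation equals the expected per-cycle reward divided by the expected cycle length,
\[
u_i(\mathbf{p})=\E_\mathbf{p}\big[\overline{v}_i(X(\mathbf{p}))\big]=\frac{1}{\ell(\mathbf{p})}\,\E_\mathbf{p}\Bigg[\sum_{j=1}^{L}\overline{v}_i(X_j)\Bigg]=\frac{\E_\mathbf{p}[G_i]}{\ell(\mathbf{p})}.
\]
Multiplying through by $\ell(\mathbf{p})$ and stacking the $k$ coordinates yields $\E_\mathbf{p}\mathbf{G}=\ell(\mathbf{p})\mathbf{u}(\mathbf{p})=\mathbf{g}(\mathbf{p})$, which is \eqref{eq:EG}.

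The only genuine obstacle is the bookkeeping around integrability and the correct normalization: one must ensure that the ratio formula is applied at the \emph{arrival} epochs used to define $X_n(\mathbf{p})$ and $L(\mathbf{p})$, dividing by the expected number of arrivals $\ell(\mathbf{p})$ rather than by an expected cycle \emph{duration} in continuous time, since $X(\mathbf{p})$ is the arrival-stationary law and may differ from the time-stationary one, as flagged in the text. I would also remark that although the lemma assumes $\ell^2(\mathbf{p})<\infty$ (a condition reserved for the later variance/martingale requirements of the SA analysis), unbiasedness itself only uses $\ell(\mathbf{p})<\infty$ together with the integrability of $\overline{\mathbf{v}}$; everything else is a routine transcription of the regenerative identity.
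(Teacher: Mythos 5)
Your argument is correct and is exactly the route the paper takes: the paper offers no separate proof of this lemma, justifying it in one line as "a direct result of a classical formula of regenerative processes (see \cite[Prop.~A.3]{CI1975})," which is precisely the ratio formula you invoke coordinate-by-coordinate at arrival epochs. Your additional bookkeeping (integrability of $\overline{v}_i$ via the standing assumption, the arrival- versus time-stationary distinction, and the observation that only $\ell(\mathbf{p})<\infty$ is needed for unbiasedness) is a faithful expansion of that citation rather than a different argument.
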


Lemma \ref{lemma:unbiased_G} explains the rational behind the choice of an estimate for $\mathbf{g(p)}$ over an estimate for $\mathbf{u(p)}$. As discussed, one may attempt to estimate $\mathbf{u(p)}$ directly, for example by considering the naive estimator 
\[\mathbf{Z} = \frac{1}{L}\sum_{j=1}^L\mathbf{\overline{v}}(X_j),\] 
however, such an estimator is generally biased. To see why, we provide an example.

\setcounter{example}{0}
\begin{example}\textbf{{\rm (Continued)}}
Recall the unobservable M/G/1 model with service rate $\mu$, reward $R$, and delay cost $C$, where we use $\mathbf{p}=(p,1-p)$ to denote a strategy and $X_j$ to describe the virtual workload at the $j$-th arrival, so that $\E_\mathbf{p} [ X(\mathbf{p}) ]=w(p\lambda)$ is the mean virtual workload when the arrival rate is $p\lambda$. In particular, the estimator $\mathbf{Z}$ when constructed from a single regeneration cycle (i.e., one busy period) is a 2-dimensional vector, $\mathbf{Z} = (R-C\cdot (\bar{X}+1/\mu),0 )$, where $\bar{X}=(1/L)\sum_{j=1}^L X_j$ is the average workload at arrival instants over all customers of that cycle. It is known that $\bar{X}$ is a biased estimator of $w(p\lambda)$, i.e., that $\E_\mathbf{p} [ \bar{X}]\neq w(p\lambda)$ (see, for example, \cite{CI1975}), and so, $\mathbf{Z}$ is biased for $\mathbf{u(p)}$. The fact that $\bar{X}$ is biased can be explained as a ramification of the well-known \emph{length bias}: The average virtual workload over long busy periods will naturally tend to be larger than that over short busy periods. Yet, during a long busy period more customers will typically arrive at the system than in a short one. By averaging the sampled workloads one erroneously normalizes the length of the  busy period, and as a result,  gives disproportional importance to short busy periods in the estimation, resulting in an underestimate of the mean workload. 
By contrast, the estimator $\mathbf{G}$ presented above, for the unobservable M/G/1 model takes the form 
\[ \mathbf{G}=\sum_{j=1}^L \mathbf{\overline{v}}(X_j)= \sum_{j=1}^L \begin{pmatrix} R-C\cdot(X_j+1/\mu) \\ 0 \end{pmatrix}.\]
This estimator is indeed unbiased for $\mathbf{g(p)}$, and in particular, $\E_{\mathbf{p}}\left( \sum_{j=1}^L X_j \right) = \ell(\mathbf{p})w(\lambda p)$.
\hfill $\diamond$
\end{example}

It is important to point out the difference between using the realized value vector for customer $j$, $\mathbf{v}(X_j, Y_j)$, and using the conditional expectation $\mathbf{\overline{v}}(X_j)$, as prescribed in (\ref{eq:G}). The latter has two main advantages. Firstly, using the expected value instead of the realization reduces the variance of the estimator. Secondly, $\mathbf{\overline{v}}$ is merely a function of the state, and hence is more convenient to work with. 
Yet, in some settings it may not be trivial to express this function explicitly. 

In principle, one can directly use $\mathbf{v}(X_j, Y_j)$ in Equation (\ref{eq:G}) instead of $\mathbf{\overline{v}}(X_j)$. However, since $\mathbf{v}(X_j, Y_j)$ is a function of both the observed state $X_j$ and the random outcome $Y_j$, in order to apply the result of Lemma \ref{lemma:unbiased_G}, one must make assumptions about the dependence between $X_j$ and $Y_j$. For example, the following assumption is sufficient: At the $j$-th arrival with the observed state $x_j$, the random outcome conditioned on the state observed, $Y_j\mid X_j =x_j$, should be independent of $X_{j'}$ and $Y_{j'}$ for all $j'\neq j$. This assumption may not hold in systems with elaborate service policies such as priority queues or processor sharing: For instance, in processor sharing, the waiting of the $j$-th arriving customer indeed depends on the service demand (as well as the action) of later-arriving customers. Furthermore, the value $\mathbf{v}(X_j, Y_j)$ is not a byproduct of a single busy period simulation because it must be computed for every possible action of a customer, including those not taken in the realization of the simulation. 
A possible solution is, for each $j=1\ldots,L$, to obtain an independent sample of $\mathbf{v}(x_j, Y_j)$ using subordinate simulations, independently of the main simulation path.  
To generate a sample of any component $v_i(x_j, Y_j)$ of the vector $\mathbf{v}(x_j, Y_j)$, one can run an independent subordinate simulation that starts with the arrival of a customer at state $x_j$ who takes action $a_i$, and terminates when the utility of that customer is realized. When possible, the same subordinate simulation can be used to evaluate as many components of the vector $\mathbf{v}(x_j, Y_j)$ as needed, as long as this is done independently of the main simulation path.

\subsection{Convergence and main result}\label{sec:converg}

In the approximation process, given a strategy $\mathbf{p}$, we make use of the estimator $\mathbf{G}$ at each iteration of the algorithm as a ``proxy'' for the true value $\mathbf{g(p)}$. Following the framework of \cite[Ch.~5]{KY2003} we present conditions for this algorithm to converge almost surely to a set of equilibrium points. In particular, we assume that the variance of the unbiased estimator $\mathbf{G}$ is finite for any strategy $\mathbf{p}$. This is combined together with the assumptions required for existence of an equilibrium, as discussed in Section \ref{sec:model}, to obtain the convergence result described in Theorem \ref{thm:convergence}. The proof of that theorem appears in Appendix \ref{sec:app_convergence-proof}.

To state our assumptions, we
first introduce the hyperplane $\mathcal{H}=\{\mathbf{x}\in \mathbb{R}^k \mid \mathbf{e}'\mathbf{x}=0\}$ which is a $(k-1)$-dimensional subspace of $\mathbb{R}^k$, parallel to $\Delta$. We further note that the orthogonal projection onto $\mathcal{H}$, $\pi_{\mathcal{H}}:\mathbb{R}^k\to \mathcal{H}$, is a linear transformation, and can be represented by a symmetric matrix $\mathbf{H}\in\mathbb{R}^{k \times k}$, namely, $\pi_{\mathcal{H}}(\mathbf{x})=\mathbf{Hx}$. 

\begin{assumption}\label{assum:finite_El2}
The second moment of the cycle length $L(\mathbf{p})$ is bounded for every strategy;  $\ell^2(\mathbf{p})<\infty$ for all $\mathbf{p}\in \Delta$.
\end{assumption}

\begin{assumption}\label{assum:finite_EG2}
The second moment of the estimator $\mathbf{G}$ is uniformly bounded on $\Delta$;\\ $\sup_{\mathbf{p}\in\Delta}\E_{\mathbf{p}} \Vert \mathbf{G} \Vert^2<\infty$. 
\end{assumption}

\begin{assumption}\label{assum:finite_u} The projected utility function $\mathbf{Hu}(\cdot)$ is continuous, and there exists a differentiable function $u^*:\Delta\to\mathbb{R}$ such that $-\mathbf{Hu}(\mathbf{p})$ is the gradient of $u^*$ at $\mathbf{p}$, i.e., $-\mathbf{Hu}(\mathbf{p})=\nabla u^*(\mathbf{p})$.
\end{assumption}

\begin{assumption}\label{assum:gamma_n}
The step-size sequence $\{\gamma_n\}_{n\geq 1}$ satisfies 
\begin{equation}\label{eq:gamma_condition}
\sum_{n=1}^\infty \gamma_n=\infty, \quad \sum_{n=1}^\infty \gamma_n^2<\infty ,
\end{equation}
\end{assumption}

\begin{theorem}\label{thm:convergence}
Suppose Assumptions \ref{assum:finite_El2}--\ref{assum:gamma_n} are satisfied. As $n\to\infty$, $\mathbf{p}^{(n)}\asarrow \mathcal{S}^e$, where $\mathcal{S}^e\subseteq\Delta$ is a set of equilibrium strategies, namely, $\mathbf{f}(\mathbf{p})=\mathbf{p}$ for every $\mathbf{p}\in \mathcal{S}^e$.
\end{theorem}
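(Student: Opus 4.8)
The plan is to follow the ODE (mean-field) method for constrained stochastic approximation developed in \cite[Ch.~5]{KY2003}. First I would rewrite the update \eqref{eq:SA_iteration} in canonical Robbins--Monro form by decomposing the estimator as $\mathbf{G}^{(n)} = \mathbf{g}(\mathbf{p}^{(n)}) + \delta\mathbf{M}^{(n)}$, where $\delta\mathbf{M}^{(n)} = \mathbf{G}^{(n)} - \mathbf{g}(\mathbf{p}^{(n)})$. Letting $\mathcal{F}_n = \sigma(\mathbf{p}^{(1)}, \mathbf{G}^{(1)}, \dots, \mathbf{G}^{(n-1)})$ denote the natural filtration, Lemma \ref{lemma:unbiased_G} gives $\E[\delta\mathbf{M}^{(n)} \mid \mathcal{F}_n] = 0$, so $\{\delta\mathbf{M}^{(n)}\}$ is a martingale-difference sequence. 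Writing the projection as an additive reflection term, $\mathbf{p}^{(n+1)} = \mathbf{p}^{(n)} + \gamma_n \mathbf{g}(\mathbf{p}^{(n)}) + \gamma_n \delta\mathbf{M}^{(n)} + \gamma_n \mathbf{Z}^{(n)}$, where $\gamma_n\mathbf{Z}^{(n)}$ is the minimal correction returning the iterate to $\Delta$, places the recursion exactly in the form treated by the constrained-SA theory. The iterates are automatically bounded since $\mathbf{p}^{(n)} \in \Delta$, a compact set, which disposes of the usual stability/tightness hurdle.

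Second, I would identify the associated projected mean ODE,
\[
\dot{\mathbf{p}} = \mathbf{g}(\mathbf{p}) + \mathbf{z},
\]
where $\mathbf{z}$ is the minimal reflection term (in the inward normal cone of $\Delta$) that keeps the flow in $\Delta$. The rest points of this constrained ODE are exactly the $\mathbf{p}\in\Delta$ for which $\mathbf{g}(\mathbf{p})$ lies in the (outward) normal cone, i.e.\ $\pi_\Delta(\mathbf{p}+\mathbf{g}(\mathbf{p}))=\mathbf{p}$. Since $\mathbf{g}(\mathbf{p})=\ell(\mathbf{p})\mathbf{u}(\mathbf{p})$ with $\ell(\mathbf{p})\ge 1 > 0$, Remark \ref{rem:choice-of-ell} shows this rest-point set coincides with the fixed-point set of $\mathbf{f}(\mathbf{p})=\pi_\Delta(\mathbf{p}+\mathbf{u}(\mathbf{p}))$, which by Lemma \ref{lem:equilibrium-equivalence} is precisely the equilibrium set $\mathcal{S}^e$.

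Third, I would verify the hypotheses of the Kushner--Yin convergence theorem and exhibit a Lyapunov function. Assumption \ref{assum:finite_EG2} bounds $\sup_n \E\Vert\delta\mathbf{M}^{(n)}\Vert^2$ (and, via Jensen, $\sup_{\mathbf{p}}\Vert\mathbf{g}(\mathbf{p})\Vert$), so together with $\sum_n\gamma_n^2<\infty$ from Assumption \ref{assum:gamma_n} the noise sum $\sum_n \gamma_n \delta\mathbf{M}^{(n)}$ converges a.s.\ and its tail contribution to the interpolated process is asymptotically negligible; the step-size conditions also fix the time-scaling. The continuity required of the drift reduces to continuity of the projected field $\mathbf{H}\mathbf{g}(\mathbf{p})=\ell(\mathbf{p})\mathbf{H}\mathbf{u}(\mathbf{p})$, for which Assumption \ref{assum:finite_u} supplies continuity of $\mathbf{H}\mathbf{u}$. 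Crucially, Assumption \ref{assum:finite_u} provides the differentiable potential $u^*$ with $-\mathbf{H}\mathbf{u}(\mathbf{p})=\nabla u^*(\mathbf{p})$; since $\nabla u^*$ lies in $\mathcal{H}$, along an interior trajectory (where $\dot{\mathbf{p}}=\mathbf{H}\mathbf{g}(\mathbf{p})$) one gets $\tfrac{d}{dt}u^*(\mathbf{p}) = \nabla u^*(\mathbf{p})'\mathbf{H}\mathbf{g}(\mathbf{p}) = -\ell(\mathbf{p})\Vert\mathbf{H}\mathbf{u}(\mathbf{p})\Vert^2 \le 0$, with equality only at rest points. Thus $u^*$ is a strict Lyapunov function for the ODE, ruling out limit cycles and forcing every limit set into $\mathcal{S}^e$; the a.s.\ convergence $\mathbf{p}^{(n)}\asarrow\mathcal{S}^e$ then follows from the ODE method.

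The hard part will be the careful treatment of the boundary of $\Delta$ inside the constrained-ODE framework: I must check that the reflection term $\mathbf{z}$ creates no spurious rest points outside $\mathcal{S}^e$ and does not increase $u^*$, so that the Lyapunov argument survives up to $\partial\Delta$ and on lower-dimensional facets where only a sub-vector of actions is supported. Concretely, this means matching the rest points of the continuous-time projected dynamics with the one-step projection defining $\mathbf{f}$ even on $\partial\Delta$, and verifying $\nabla u^*(\mathbf{p})'\mathbf{z}\le 0$ there. A secondary technical point is confirming enough regularity of $\mathbf{g}$ (in particular of $\ell(\mathbf{p})$) for well-posedness of the mean ODE, which is where Assumptions \ref{assum:finite_El2} and \ref{assum:finite_EG2} do their work.
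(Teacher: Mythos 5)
Your plan follows the same route as the paper: recast \eqref{eq:SA_iteration} in the canonical constrained Robbins--Monro form with martingale-difference noise, pass to the projected mean ODE, identify its rest points with the fixed points of $\mathbf{f}$ (hence with equilibria via Lemma \ref{lem:equilibrium-equivalence}), and use the potential $u^*$ of Assumption \ref{assum:finite_u} as a Lyapunov function to force the limit set into $\mathcal{S}^e$ via \cite[Ch.~5.2, Thm.~2.4]{KY2003}. Two loose ends in your write-up are worth closing. First, you keep the drift as $\mathbf{g}(\mathbf{p})=\ell(\mathbf{p})\mathbf{u}(\mathbf{p})$, which makes well-posedness of the mean ODE hinge on continuity of $\ell(\cdot)$ --- a property that Assumptions \ref{assum:finite_El2}--\ref{assum:finite_EG2} do \emph{not} supply (they only give boundedness of moments). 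The paper sidesteps this by writing the iteration with the random step size $\epsilon_n=\gamma_n\ell(\mathbf{p}^{(n)})$ and the normalized increment $\mathbf{H}\mathbf{G}^{(n)}/\ell(\mathbf{p}^{(n)})$ (using Lemma \ref{lem:aux-lemma-thm5} to insert $\mathbf{H}$), so the ODE drift is $\mathbf{Hu}(\mathbf{p})$, whose continuity is exactly what Assumption \ref{assum:finite_u} asserts; boundedness of $\ell$ from Assumption \ref{assum:finite_El2} is only needed to check $\sum_n\epsilon_n=\infty$ and $\sum_n\epsilon_n^2<\infty$. Second, the boundary step you flag as ``the hard part'' is resolved in the paper's Lemma \ref{lem:limit-points-are-equilibria} by a short argument: since $\mathbf{p}(t)+\mathbf{Hu}(\mathbf{p}(t))+\mathbf{z}(t)$ is the projection of $\mathbf{p}(t)+\mathbf{Hu}(\mathbf{p}(t))$ onto $\Delta$ and $\mathbf{p}(t)\in\Delta$, one has $\Vert\mathbf{z}(t)\Vert\leq\Vert\mathbf{Hu}(\mathbf{p}(t))\Vert$ with strict inequality off the rest-point set, whence Cauchy--Schwarz gives $\frac{d}{dt}u^*(\mathbf{p}(t))=-\big(\mathbf{Hu}(\mathbf{p}(t))\big)'\big(\mathbf{Hu}(\mathbf{p}(t))+\mathbf{z}(t)\big)<0$ at every non-stationary point, including those on $\partial\Delta$; and the rest points themselves are, by construction of the reflection cone, exactly the points with $\pi_\Delta(\mathbf{p}+\mathbf{Hu}(\mathbf{p}))=\mathbf{p}$, i.e.\ the fixed points of $\mathbf{f}$, so no spurious rest points arise. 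With these two repairs your argument matches the paper's proof.
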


\begin{corollary}\label{eq:corrol_conv}
Under the conditions of Theorem \ref{thm:convergence}, if the set of equilibrium strategies is finite, then as $n\to\infty$, $\mathbf{p}^{(n)}\asarrow \mathbf{p}^e$, where  $\mathbf{p}^e$ is an equilibrium.
\end{corollary}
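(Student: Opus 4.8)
The plan is to upgrade the set-convergence already furnished by Theorem~\ref{thm:convergence} to convergence to a single point, exploiting the finiteness of $\mathcal{S}^e$. Theorem~\ref{thm:convergence} gives $\inf_{\mathbf{x}\in\mathcal{S}^e}\Vert\mathbf{p}^{(n)}-\mathbf{x}\Vert\asarrow 0$, so almost every sample path eventually enters any neighborhood of $\mathcal{S}^e$; what remains is to rule out the possibility that the path oscillates indefinitely between two or more distinct equilibria. The strategy combines three ingredients: (i) the set-convergence from Theorem~\ref{thm:convergence}; (ii) an almost-sure bound showing the algorithm's increments vanish; and (iii) the strictly positive minimal separation between the finitely many points of $\mathcal{S}^e$. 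The degenerate case $\vert\mathcal{S}^e\vert=1$ is immediate, so assume $\vert\mathcal{S}^e\vert\geq 2$.

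First I would establish that the increments vanish almost surely. Because $\mathbf{p}^{(n)}\in\Delta$ and the metric projection $\pi_\Delta$ onto the convex set $\Delta$ is nonexpansive, $\pi_\Delta(\mathbf{p}^{(n)})=\mathbf{p}^{(n)}$ yields $\Vert\mathbf{p}^{(n+1)}-\mathbf{p}^{(n)}\Vert\leq\gamma_n\Vert\mathbf{G}^{(n)}\Vert$. Conditioning on the past, the estimator $\mathbf{G}^{(n)}$ is generated under the (measurable) strategy $\mathbf{p}^{(n)}$, so its conditional second moment is at most $M:=\sup_{\mathbf{p}\in\Delta}\E_\mathbf{p}\Vert\mathbf{G}\Vert^2$, which is finite by Assumption~\ref{assum:finite_EG2}. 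Hence $\E\sum_{n}\gamma_n^2\Vert\mathbf{G}^{(n)}\Vert^2\leq M\sum_n\gamma_n^2<\infty$ by Assumption~\ref{assum:gamma_n}, so the nonnegative series $\sum_n\gamma_n^2\Vert\mathbf{G}^{(n)}\Vert^2$ converges almost surely; its general term therefore tends to $0$, giving $\gamma_n\Vert\mathbf{G}^{(n)}\Vert\asarrow 0$ and thus $\Vert\mathbf{p}^{(n+1)}-\mathbf{p}^{(n)}\Vert\asarrow 0$.

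Next I would run a deterministic ``no-jump'' argument on each good sample path. Set $2\delta=\min_{\mathbf{x}\neq\mathbf{y}\in\mathcal{S}^e}\Vert\mathbf{x}-\mathbf{y}\Vert>0$ and surround each equilibrium by an open ball of radius $\delta/2$; these balls are pairwise disjoint and any two of them are separated by a gap of at least $\delta$. On a path where both the set-convergence of (i) and the vanishing increments of (ii) hold, there is an index $N$ such that for all $n\geq N$ the iterate $\mathbf{p}^{(n)}$ lies in the union of these balls and $\Vert\mathbf{p}^{(n+1)}-\mathbf{p}^{(n)}\Vert<\delta$. A single step shorter than the inter-ball gap cannot carry the iterate from one ball into a different one, so $\mathbf{p}^{(n)}$ is trapped in a single ball $B(\mathbf{q},\delta/2)$ for all $n\geq N$. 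Since $\mathbf{q}$ is the only element of $\mathcal{S}^e$ in that ball and the path still converges to $\mathcal{S}^e$, every limit point of the path equals $\mathbf{q}$; as this occurs on a probability-one event, $\mathbf{p}^{(n)}\asarrow\mathbf{q}=:\mathbf{p}^e$.

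The only genuinely probabilistic step is (ii), the almost-sure vanishing of the increments, and I expect that to be the crux, since everything after it is a purely topological consequence of the limit set being finite (hence having only isolated points). The delicate point there is that $\mathbf{G}^{(n)}$ depends on the random iterate $\mathbf{p}^{(n)}$, which is precisely why the \emph{uniform} second-moment control of Assumption~\ref{assum:finite_EG2}, rather than a merely pointwise bound, is what lets me pass the expectation through the infinite sum.
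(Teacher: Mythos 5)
Your proof is correct. The paper states this corollary without proof, treating it as an immediate consequence of Theorem~\ref{thm:convergence}; your argument --- nonexpansiveness of $\pi_\Delta$ plus Assumptions~\ref{assum:finite_EG2} and~\ref{assum:gamma_n} to get $\Vert\mathbf{p}^{(n+1)}-\mathbf{p}^{(n)}\Vert\asarrow 0$, followed by the separation/no-jump argument on the finite limit set --- is exactly the standard justification the authors are implicitly invoking, so it fills the omitted step faithfully.
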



\subsection{Discussion of assumptions}\label{sec:disc}

 Assumption~\ref{assum:finite_El2} is required for positive recurrence of the underlying regenerative process, which in turn ensures the existence of a stationary distribution for any strategy.
The step-size condition of Assumption~\ref{assum:gamma_n} is standard and enables the application of the law of large numbers for martingale differences, which underlies the convergence result in Theorem \ref{thm:convergence}. Assumptions~\ref{assum:finite_EG2}--\ref{assum:finite_u} impose regularity conditions on the utility function which are specific to the framework of queueing games, and therefore deserve more attention, as discussed below.

A straightforward generalization of Lemma \ref{lem:existence-of-equilibiurm} shows that the continuity of $\mathbf{Hu}(\cdot)$ implies that an equilibrium exits, in accordance with the first part of Assumption~\ref{assum:finite_u}. The second part of the assumption, which relates to the existence of some potential function $u^*$,  states, in different words, that $\mathbf{Hu}(\cdot)$ should be a conservative vector field on $\Delta$. Still, the function $u^*$ need not be specified. Its existence prevents the iterate from circulating a perpetual loop of off-equilibrium strategies. Importantly, without this assumption there is no guarantee  that the deterministic algorithm converges to an equilibrium, even when $\mathbf{u(p)}$ is explicitly given. For a continuous utility $\mathbf{u}(\cdot)$, Assumption \ref{assum:finite_u} holds, for example, when $k=2$, or alternatively when $k>2$ and $\mathbf{u}$ itself is a conservative vector field on $\Delta$ (see Remark \ref{rem:conservative-u} in Appendix \ref{sec:app_convergence-proof}). Further elaboration on Assumption \ref{assum:finite_u} is provided in Appendix \ref{sec:app_convergence-proof}.

Assumption  \ref{assum:finite_EG2} imposes that the variance of the estimator $\mathbf{G}$ is finite. Finite variance of the increments is quite a common assumption in the framework of stochastic approximation. However, verifying it  for specific queueing games with only limited knowledge about the stationary distribution can be challenging. In this paper we provide specific examples in which we rely on  queueing-analytic tools to verify this assumption. Example \ref{example:MG1} below  provides an analogous condition to Assumption \ref{assum:finite_EG2} for the unobservable M/G/1 queue, which is formulated in terms of moments of the service-time distribution.

\setcounter{example}{0}
\begin{example}\textbf{{\rm (Continued)}}
Consider once more the unobservable M/G/1 model described in Sections \ref{sec:model} and \ref{sec:SA}: Denote the generic service time random variable by $Y$ with $\E Y=1/\mu$. As before, we assume that $\lambda<\mu$. 
We further assume $\E Y^4<\infty$, which clearly implies $\E Y^2<\infty$. In this case Assumptions \ref{assum:finite_El2} and \ref{assum:finite_u}
can be verified by applying known properties, which is done in Section \ref{sec:parallel_GG1}. In the particular unobservable M/G/1 model, $\E Y^4<\infty$ is also a necessary condition for Assumption \ref{assum:finite_EG2};
Recall that $G_1=\sum_{i=1}^L(R-C(X_i+1/\mu))$, where $X_i$ is the virtual workload observed by customer $i$. Hence, $G_1$ is an affine transformation of the sum of waiting times during a busy period. An explicit formula of the variance of the cumulative workload during a busy period was given in \cite{DJ1969} as a function of $\lambda$ and $\E Y^r$ for $r=1,2,3,4$. Therefore, for the strategy $\mathbf{p}=\mathbf{e}_1$ (all customers join), if $\lambda<\mu$ and $\E Y^4<\infty$, then
\[
\E_{\mathbf{e}_1} \Vert \mathbf{G} \Vert^2=\E_{\mathbf{e}_1} \Bigg\Vert \sum_{j=1}^{L}  \mathbf{\overline{v}}(X_j(\mathbf{e}_1))\Bigg\Vert^2<\infty .
\] 
Furthermore, by standard coupling arguments it can be verified that for every strategy $\mathbf{p}\in\Delta$,
\[
\E_{\mathbf{p}} \Vert \mathbf{G} \Vert^2\leq \E_{\mathbf{e}_1} \Vert \mathbf{G} \Vert^2.
\]
We conclude that $\lambda<\mu$ together with $\E Y^4<\infty$ form a necessary and sufficient condition for Assumption \ref{assum:finite_EG2}. Theorem~\ref{thm:convergence} then implies that under these conditions the SA algorithm converges almost surely to the unique equilibrium. \hfill$\diamond$ 
\end{example}

The conditions in Assumptions~\ref{assum:finite_El2}--\ref{assum:finite_u} are imposed on all strategies in the simplex. However, any strategy in $\Delta$ for which the system is not stable, in general, refutes the assumptions. This issue can be alleviated in some cases using a small modification of the algorithm. If there exists a known closed convex set of strategies $\mathcal{S}\subseteq\Delta$ that contains an equilibrium, and conditions \ref{assum:finite_El2}--\ref{assum:finite_EG2} are satisfied for all $\mathbf{p}\in\mathcal{S}$, then the projection $\pi_\mathcal{S}$ can be used in \eqref{eq:SA_iteration} and all our results carry over. In practice, such a set can often be found because stability conditions are known for many models even without explicit derivation of the stationary distribution. In absence of a-priori knowledge of the stability region, the algorithm has to be modified to incorporate `stability-enforcing' dynamics. Appendix~\ref{sec:app_stability} presents a heuristic modification of the algorithm that terminates a busy cycle if its length exceeds a threshold. Of course, this introduces a bias in the estimation step. To eliminate this bias, the termination threshold is increased gradually, with the goal of making the bias asymptotically negligible once the algorithm is absorbed in the stability region.

That said, we acknowledge that the task of detecting stability regions using simulation is challenging. In fact, the problem of determining whether a queueing system is stable under a general schedueling policy was shown in \cite{G2002} to be undecidable even in a simple system configuration. A  simulated-annealing method to identify stability regions in some practical setups is suggested in \cite{MPW2017}. Some ad-hoc solutions have been discussed in recent papers that involve learning and control algorithms for queueing systems (see \cite{DG2021} and \cite{CLH2020}).

\subsection{Rate of convergence and approximate equilibrium}\label{sec:conv}
An important question from a practical standpoint is how many iterations are needed to obtain a `good' approximation. 
This question has been the subject of interest for a large body of works in the literature of stochastic gradient-descent algorithms, where it is often implied by appropriate conditions on the primitives that the iterate $\mathbf{p}^{(n)}$ converges (almost surely) to a unique point $\mathbf{p}^e$. Then, the above question is addressed via studying different notions of convergence of the scaled approximation error, $(\mathbf{p}^{(n)}-\mathbf{p}^e)/\sqrt{\gamma_n}$.
For a specific choice of the step-size sequence $\{\gamma_n\}_{n\geq 1}$,  \cite{NJLS2009} established $L^2$-convergence of the scaled error, by imposing strong convexity and smoothness on the potential function ${u}^*$. Under these assumptions, the equilibrium exists uniquely (as it is the minimum point of $u^*$), and the vector $-\mathbf{Hu}$ is the gradient for which we have noisy observations. In what follows we leverage this framework to bound our algorithm's equilibrium-approximation error with high probability.

Rather than studying the scaled error per se, we find it more meaningful in our setup to understand, given the iteration index $n$, how ``close" the strategy $\mathbf{p}^{(n)}$ is to meeting the equilibrium condition. We therefore adopt the concept of \emph{$\epsilon$-approximate Nash equilibrium} (see \cite{DMP2009}). Recall that a strategy $\mathbf{p}$ is a (symmetric) $\epsilon$-approximate Nash equilibrium, or an $\epsilon$-equilibrium for short, if no deviation from this strategy can increase a customer's expected utility by more than $\epsilon$:
\begin{equation}\label{eq:epsilon_Nash}
\mathbf{u}(\mathbf{p})\apost\mathbf{p}\geq  \max_{\mathbf{q}\in\Delta}\mathbf{u}(\mathbf{p})\apost\mathbf{q}-\epsilon.
\end{equation}
Proposition \ref{prop:eps_NE} below relates the convergence of the scaled errors to $\epsilon$-equilibria. The proof is provided in Appendix~\ref{sec:app_norm}. 

\begin{proposition}\label{prop:eps_NE}
Suppose that Assumptions \ref{assum:finite_El2}--\ref{assum:finite_u} hold. Suppose also that $\mathbf{u}$ is (locally) Lipschitz on $\Delta$, and that $u^*$ in \ref{assum:finite_u} is strongly convex on $\Delta$, i.e., there exists a constant $C>0$ such that
\begin{equation}\label{eq:convex_potential}
u^*(\mathbf{p})\geq u^*(\mathbf{q})+\mathbf{Hu}(\mathbf{q})\apost(\mathbf{p}-\mathbf{q}) + \frac{C}{2}\Vert \mathbf{p}-\mathbf{q} \Vert^2, \ \forall \mathbf{p,q}\in\Delta.
\end{equation}
Let $\gamma_n=\eta/n$, where $\eta> 1/(2C)$. Then there exists a constant $M$, such that for all $n\geq 1$ and $\delta\in(0,1)$,
\begin{equation}\label{eq:epsilon_nash_P}
        \P\left(\mathbf{u}(\mathbf{p}^{(n)})\apost\mathbf{p}^{(n)} \geq         \max_{\mathbf{q}\in\Delta}\mathbf{u}(\mathbf{p}^{(n)})\apost \mathbf{q} -\sqrt{n^{-\delta}}\right)
        \geq 1-M \cdot  n^{\delta-1};
\end{equation}
that is, for all $n$ and  $\delta\in(0,1)$, the $n$-th iterate $\mathbf{p}^{(n)}$ is a $\sqrt{n^{-\delta}}$-equilibrium with probability (at least) $1-M \cdot  n^{\delta-1}$.
\end{proposition}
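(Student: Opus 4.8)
The plan is to combine a self-contained $L^2$-contraction estimate for the projected Robbins--Monro recursion \eqref{eq:SA_iteration} with a Lipschitz bound that converts distance-to-equilibrium into the equilibrium gap, and then to pass from the $L^2$ bound to the probability bound via Markov's inequality. Throughout, let $\mathbf{p}^e$ denote the equilibrium: by strong convexity of $u^*$ it is the \emph{unique} minimizer of $u^*$ over $\Delta$, and it satisfies the variational inequality $\nabla u^*(\mathbf{p}^e)'(\mathbf{q}-\mathbf{p}^e)\geq 0$ for all $\mathbf{q}\in\Delta$, i.e.\ $\mathbf{Hu}(\mathbf{p}^e)'(\mathbf{q}-\mathbf{p}^e)\leq 0$. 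Let $\mathcal{F}_n=\sigma(\mathbf{p}^{(1)},\dots,\mathbf{p}^{(n)})$, so that by Lemma \ref{lemma:unbiased_G} (applicable under Assumption \ref{assum:finite_El2}) we have $\E[\mathbf{G}^{(n)}\mid\mathcal{F}_n]=\mathbf{g}(\mathbf{p}^{(n)})=\ell(\mathbf{p}^{(n)})\mathbf{u}(\mathbf{p}^{(n)})$ and $\E[\Vert\mathbf{G}^{(n)}\Vert^2\mid\mathcal{F}_n]\leq\sigma^2:=\sup_{\mathbf{p}\in\Delta}\E_{\mathbf{p}}\Vert\mathbf{G}\Vert^2<\infty$ by Assumption \ref{assum:finite_EG2}.

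First I would establish that $a_n:=\E\Vert\mathbf{p}^{(n)}-\mathbf{p}^e\Vert^2\leq K/n$ for some constant $K$. Since $\pi_\Delta$ is non-expansive and $\pi_\Delta(\mathbf{p}^e)=\mathbf{p}^e$, expanding $\Vert\mathbf{p}^{(n+1)}-\mathbf{p}^e\Vert^2\leq\Vert\mathbf{p}^{(n)}+\gamma_n\mathbf{G}^{(n)}-\mathbf{p}^e\Vert^2$ and taking $\E[\cdot\mid\mathcal{F}_n]$ gives
\[
\E[\Vert\mathbf{p}^{(n+1)}-\mathbf{p}^e\Vert^2\mid\mathcal{F}_n]\leq\Vert\mathbf{p}^{(n)}-\mathbf{p}^e\Vert^2+2\gamma_n\,\mathbf{g}(\mathbf{p}^{(n)})'(\mathbf{p}^{(n)}-\mathbf{p}^e)+\gamma_n^2\sigma^2.
\]
As $\mathbf{p}^{(n)}-\mathbf{p}^e\in\mathcal{H}$ and $\mathbf{H}$ is the symmetric projection onto $\mathcal{H}$, the cross term equals $\ell(\mathbf{p}^{(n)})\,(\mathbf{Hu}(\mathbf{p}^{(n)}))'(\mathbf{p}^{(n)}-\mathbf{p}^e)$. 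Strong convexity \eqref{eq:convex_potential} yields strong monotonicity of $-\mathbf{Hu}=\nabla u^*$, which combined with the variational inequality at $\mathbf{p}^e$ gives $(-\mathbf{Hu}(\mathbf{p}^{(n)}))'(\mathbf{p}^{(n)}-\mathbf{p}^e)\geq C\Vert\mathbf{p}^{(n)}-\mathbf{p}^e\Vert^2$; since $\ell(\mathbf{p})\geq 1$, the cross term is at most $-C\Vert\mathbf{p}^{(n)}-\mathbf{p}^e\Vert^2$. Taking total expectations yields the recursion $a_{n+1}\leq(1-2C\gamma_n)a_n+\sigma^2\gamma_n^2$, and with $\gamma_n=\eta/n$, $2C\eta>1$, a standard deterministic recursion estimate (Chung's lemma) gives $a_n\leq K/n$. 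This recovers the $L^2$ rate of \cite{NJLS2009}, whose hypotheses reduce here to Assumptions \ref{assum:finite_EG2} and \ref{assum:finite_u} together with the strong-convexity hypothesis.

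Next I would bound the equilibrium gap $\phi(\mathbf{p}):=\max_{\mathbf{q}\in\Delta}\mathbf{u}(\mathbf{p})'\mathbf{q}-\mathbf{u}(\mathbf{p})'\mathbf{p}=\max_i u_i(\mathbf{p})-\mathbf{u}(\mathbf{p})'\mathbf{p}$, noting that $\phi(\mathbf{p}^e)=0$ since $\mathbf{p}^e$ is an equilibrium. Because $\mathbf{u}$ is (locally, hence on the compact set $\Delta$ globally) Lipschitz and bounded, both $\mathbf{p}\mapsto\max_i u_i(\mathbf{p})$ (a finite maximum of Lipschitz functions) and $\mathbf{p}\mapsto\mathbf{u}(\mathbf{p})'\mathbf{p}$ (a product of bounded Lipschitz functions) are Lipschitz on $\Delta$; hence $\phi$ is Lipschitz with some constant $L_\phi$, so that $\phi(\mathbf{p}^{(n)})=\phi(\mathbf{p}^{(n)})-\phi(\mathbf{p}^e)\leq L_\phi\Vert\mathbf{p}^{(n)}-\mathbf{p}^e\Vert$.

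Finally, combining the two steps through Markov's inequality applied to the squared norm, for any $\epsilon>0$,
\[
\P(\phi(\mathbf{p}^{(n)})\geq\epsilon)\leq\P(\Vert\mathbf{p}^{(n)}-\mathbf{p}^e\Vert\geq\epsilon/L_\phi)\leq\frac{L_\phi^2\,a_n}{\epsilon^2}\leq\frac{L_\phi^2 K}{\epsilon^2\,n}.
\]
Taking $\epsilon=\sqrt{n^{-\delta}}$ (so $\epsilon^2=n^{-\delta}$) and setting $M:=L_\phi^2 K$ gives $\P(\phi(\mathbf{p}^{(n)})\geq\sqrt{n^{-\delta}})\leq M n^{\delta-1}$, which is precisely \eqref{eq:epsilon_nash_P}. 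The step I expect to be the main obstacle is the $L^2$-rate estimate: one must handle the projection carefully and, crucially, verify that the unbiased but $\ell(\mathbf{p})$-\emph{rescaled} drift $\mathbf{g}=\ell\mathbf{u}$ still produces a genuine contraction — this is secured by $\ell(\mathbf{p})\geq1$, which preserves the strong-monotonicity constant $C$ of $-\mathbf{Hu}$. A secondary subtlety is that $\mathbf{p}^e$ may lie on the boundary of $\Delta$, so the variational form of optimality (rather than $\nabla u^*(\mathbf{p}^e)=0$) must be used in the cross-term bound.
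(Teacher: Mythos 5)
Your proposal is correct and follows essentially the same route as the paper's proof: establish $\E\Vert \mathbf{p}^{(n)}-\mathbf{p}^e\Vert^2=O(1/n)$ from strong convexity with step size $\eta/n$, $\eta>1/(2C)$, convert distance-to-equilibrium into the equilibrium gap via the Lipschitz property of $\mathbf{u}$ (using $\ell(\mathbf{p})\geq 1$ to preserve the contraction constant, exactly as the paper does), and finish with Markov's inequality. The only difference is inessential: you derive the $L^2$ recursion and its $O(1/n)$ solution from scratch (non-expansiveness of $\pi_\Delta$, strong monotonicity of $-\mathbf{Hu}$ plus the variational inequality at a possibly boundary $\mathbf{p}^e$, and Chung's lemma), whereas the paper imports this step from Nemirovski et al.\ \cite{NJLS2009}.
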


Notice that the choice of the step size $\gamma_n=\eta/n$ in Proposition \ref{prop:eps_NE} satisfies \ref{assum:gamma_n}. Following \cite{NJLS2009}, under the conditions of Proposition~\ref{prop:eps_NE}, this choice of $\gamma_n$ implies that $\E\Vert \mathbf{p}^{(n)} - \mathbf{p}^e\Vert^2=\mathrm{o}(1/n)$. 
Proposition~\ref{prop:eps_NE} shows this also yields an $\epsilon_n$-equilibrium with high probability ($1-\mathrm{o}(n^{\delta-1})$), where $\epsilon_n=\sqrt{n^{-\delta}}$. For example, with $\delta=1/2$,  the result ensures that an $\epsilon$-equilibrium is reached, with $1-\mathrm{o}(\epsilon^{2})$ certainty, after $\epsilon^{-4}$ iterations.

There are various other conditions in the literature for convergence of the scaled error, including almost-sure convergence rates (e.g., \cite{SGD2021} for unconstrained stochastic optimization) and weak convergence to the normal distribution (e.g., \cite[Ch.~8.4]{B2008}). All of these results require assumptions on the utility vector $\mathbf{u}$ and the penitential function $u^*$ that ensure the algorithm is ``well behaved'', so to speak, in the neighborhood of equilibrium points. In Proposition~\ref{prop:eps_NE}, these assumptions take the form of \eqref{eq:convex_potential}  and the corresponding step-size constant $\eta>1/(2C)$. Though in applications these conditions can be hard to verify, our result sheds light on the obtainable rates when some limited structure of the stationary expected utility is available. For a detailed discussion of the convergence rate for constrained and unconstrained stochastic-approximation algorithms, see \cite{BK2002} and \cite{KY2003}. 

\subsection{Accounting for observable information in customer's decisions}\label{sec:comment-obs}

In the context of queueing games, it is natural to think of an `action' as representing a possible terminal outcome of the customer's decision, e.g., `join' or `balk', as described in Example \ref{example:MG1}. However, in our framework, the term `action' is in fact synonymous with `pure strategy' (this interpretation slightly deviates from the terminology used in extensive-form games, see \cite[Ch.~3]{MSZ2013}). \textcolor{black}{Thus, an element in $\mathcal{A}$ can potentially represent a comprehensive decision rule, prescribing what to do at each possible event, with respect to the information available to the customer during their sojourn in the system. For instance, assuming customers observe the queue length, a pure strategy can prescribe joining at some queue-lengths while balking at others. It can further capture the evolution of the system state after the arrival, e.g., for $x\in\{1,2,3\}$, specify whether to abandon the queue after $x$ minutes if the queue length did not decrease. }

\textcolor{black}{ One way to account for observable information in the customer decision-making process is to extend the action space, i.e., the set of possible pure strategies, to encompass all possible decision rules.
Note that the information can also pertain to individual customer features, and not necessarily the system state. In Appendix~\ref{sec:heterogeneous-customers} we implement this approach in a game with two customer types.
Below we explain through a brief example how to amend the action set when state information is available upon arrival. }

Consider an observable GI/G/1/$N$ queue: Each customer observes the number $n$ of customers in the queue, and assuming it is not full ($0\leq n\leq N-1$), they choose whether to join or balk. Then a pure strategy is an element in $\{0,1\}^N$ determining which queue lengths to join, and we associate it with a single action in the (finite) set $\mathcal{A}$. 

Observable queueing games with finitely many pure strategies are covered by the formulation in Section~\ref{sec:model} and hence our results are applicable to such games. However it can be noticed, particularly in the example above, that the dimension of the strategy space grows exponentially with the number of signals potentially observed. When the number of signals is large, this approach may not be practical. In Section \ref{sec:observable} we discuss an extension of the algorithm that traverses a strategy space whose dimension grows proportionally (as opposed to exponentially) in the number of signals. 

\section{Applications,  implementation and refinements}\label{sec:applications}

This section presents two applications of the SA algorithm. The first, discussed in Section \ref{sec:parallel_GG1}, is a general game of choosing a queue from a pool of parallel single-server queues, or balking from the system altogether. This model, inspired by \cite{BS1983}, generalizes the canonical M/G/1 model of Example~\ref{example:MG1} and also the motivating example of two parallel queues in Section~\ref{sec:motivation}. The second example, in Section \ref{sec:CR} is a generalization of the model introduced in \cite{HS2017}, considering a system comprised of two parallel servers; one with an infinite buffer queue and one with no queueing buffer, and customers have the option to probe (sense) the no-buffer server at a cost before joining the infinite buffer queue.
For both applications the sufficient convergence conditions of Theorem~\ref{thm:convergence} are verified in detail, and some numerical results are reported. The related proofs appear in Appendix \ref{sec:appA}. 
Based on the second example we introduce and illustrate two practical refinement techniques of the algorithm, one that reduces the variance of the estimator and one that dynamically chooses the step size. 

\subsection{Unobservable GI/G/1 queues in parallel}\label{sec:parallel_GG1}
The main purpose of the section is to rigorously verify Assumptions \ref{assum:finite_El2}--\ref{assum:finite_u}, which imply the convergence of the SA algorithm, in the queueing game described below. The proof of Proposition \ref{prop:GGn} is provided in Appendix~\ref{prop:GGn-proof} and relies on a coupling construction, which we believe can be found useful in verifying the assumptions for further generalizations and variations of the model here.

For some integer $k>1$, consider a system of $k-1$ parallel GI/G/1 queues where customers can choose which queue to join, with the possibility of balking, i.e., not joining any queue at all. Arrivals to the system are generated according to a renewal process with a continuous  inter-arrival distribution $H$ and mean $1/\lambda$. Each arriving customer chooses, without observing the system state, whether at all to join a queue, and if so, which of the $k-1$ queues to join.  Denote the generic service time variable in station $m\in\{1,\ldots,k-1\}$ by $Y_m\sim F_m$, and its mean by $1/\mu_m$. A strategy $\mathbf{p}=(p_1, \dots, p_k)\in\Delta$, for each $m\in\{1,\ldots,k-1\}$ expresses the probability $p_m$ of joining queue $m$, with $p_k$ being the probability of balking.
Arriving at a realization of the state $x = (x^{[1]}, \dots, x^{[k-1]})$, the net value for a customer joining queue $m=1, \dots, k-1$, is given by ${v}_m(x, y_m)=\nu_m (x^{[m]}+y_m)$, where $y_m$ is a realization of $Y_m$. The value from balking is normalized to zero; ${v}_{k}(x)=0$.

Clearly, this is a generalization of the motivating example presented in Section~\ref{sec:motivation} and the unobservable M/G/1 model discussed in Example \ref{example:MG1}. The next proposition presents sufficient conditions for convergence of the SA algorithm to a Nash equilibrium.
\begin{proposition}
\label{prop:GGn}
Assume that for every $m\in\{1,,\ldots,k-1\}$: (i) $\lambda < \mu_m$; (ii) $\E [Y_m^4]<\infty$; and (iii) the function $\nu_m:\mathbb{R}_+\to\mathbb{R}$ is Lipschitz continuous. Then for any initial strategy $\mathbf{p}^{(1)}\in\Delta$ and step-size sequence $\{\gamma_n\}_{n\geq 1}$ satisfying Assumption  \ref{assum:gamma_n}, the SA algorithm defined in \eqref{eq:SA_iteration} converges almost surely to a set of equilibrium strategies, in the sense of Theorem \ref{thm:convergence}.
\end{proposition}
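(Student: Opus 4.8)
The plan is to verify the three structural hypotheses of Theorem~\ref{thm:convergence} — Assumptions~\ref{assum:finite_El2}–\ref{assum:finite_u} (Assumption~\ref{assum:gamma_n} is granted in the statement) — and then invoke the theorem directly. The three conditions feed in along different channels: the Lipschitz property of $\nu_m$ both linearizes $\mathbf{\overline v}$ in the workload and secures continuity of $\mathbf u$; the stability $\lambda<\mu_m$ drives the cycle-length moment bounds; and $\E[Y_m^4]<\infty$ supplies the moments needed for the second moment of $\mathbf G$.

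I would begin with Assumption~\ref{assum:finite_u}, which is cleanest here thanks to a decoupling observation. Because customers are routed independently to station $m$ with probability $p_m$, the arrival stream feeding station $m$ is a $p_m$-thinning of the underlying renewal process, whose law depends on $\mathbf p$ only through $p_m$; since station $m$ evolves in isolation from the others, its stationary workload — and hence $u_m(\mathbf p)=\E_{\mathbf p}[\nu_m(X^{[m]}(\mathbf p)+Y_m)]$ — is a function of $p_m$ alone, say $u_m(\mathbf p)=\phi_m(p_m)$, with $u_k\equiv 0$. A separable field of this form is automatically conservative: it is the gradient of $\Psi(\mathbf p)=\sum_{m=1}^{k-1}\int_0^{p_m}\phi_m(s)\,\diff s$, so $\mathbf{Hu}=\nabla_\Delta(\Psi|_\Delta)$ and one may take $u^*=-\Psi|_\Delta$. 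Continuity of each $\phi_m$ (hence of $\mathbf{Hu}$) follows from the Lipschitz bound on $\nu_m$ together with the continuous dependence of the stationary GI/G/1 workload on its joining rate $\lambda p_m$ on the stable range $\lambda p_m<\mu_m$.

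For Assumption~\ref{assum:finite_El2} I only need finiteness of $\ell^2(\mathbf p)$ for each fixed $\mathbf p$. Here I would couple the network, on a common probability space of arrival epochs and service primitives, to a single work-conserving server fed by the same routed work: since one server drains no faster than the $k-1$ parallel servers, its workload dominates the total network workload pathwise, so whenever the network is empty the single server is (at least as often) idle and the network regeneration cycle is contained in one of its busy periods. This aggregated queue has load $\lambda\sum_{m} p_m/\mu_m\le\lambda/\min_m\mu_m<1$, hence is stable; with $\E[Y_m^2]<\infty$ the number of arrivals in its busy period has finite second moment by classical random-walk first-passage estimates, which dominates $L(\mathbf p)$ and gives $\ell^2(\mathbf p)<\infty$.

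The crux is Assumption~\ref{assum:finite_EG2}, the \emph{uniform} bound $\sup_{\mathbf p}\E_{\mathbf p}\|\mathbf G\|^2<\infty$, and this is where the coupling construction does the real work. Writing $\mathbf G=\sum_{j=1}^{L}\mathbf{\overline v}(X_j)$ and using the Lipschitz bound $|\overline v_m(x)|\le C_m(1+x^{[m]})$, it suffices to bound $\E_{\mathbf p}[L^2]$ and $\E_{\mathbf p}\big[(\sum_{j=1}^{L}X_j^{[m]})^2\big]$ uniformly in $\mathbf p$. The difficulty is that no single pathwise domination controls both factors at once: the workloads are tamed by thinning, whereas the cycle length is tamed by drift. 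For the workloads I would couple each station to a reference queue $R_m$ in which \emph{every} customer joins station $m$ (load $\lambda/\mu_m<1$, independent of $\mathbf p$); since the $\mathbf p$-system routes only a sub-stream to station $m$ with identical service times, monotonicity of the single-server workload in its input gives the $\mathbf p$-free pathwise bound $X_j^{[m]}\le\tilde X^{[m]}(T_j-)$. For the cycle length I would exploit the uniform negative drift: the total-work input rate is at most $\lambda/\min_m\mu_m<1$ regardless of $\mathbf p$, yielding a uniform geometric-type control of the regeneration cycle via a Lyapunov / stopped-random-walk argument, the requisite increment moments being furnished by $\E[Y_m^4]<\infty$. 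Combining the two bounds through a Wald / Cauchy–Schwarz estimate — precisely where the fourth service moment enters, in the spirit of the cumulative-workload variance formula of \cite{DJ1969} — delivers the uniform second-moment bound on $\mathbf G$. With Assumptions~\ref{assum:finite_El2}–\ref{assum:gamma_n} verified, Theorem~\ref{thm:convergence} yields the claimed almost-sure convergence.
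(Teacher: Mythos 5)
Your overall architecture matches the paper's: verify Assumptions \ref{assum:finite_El2}--\ref{assum:finite_u} and invoke Theorem \ref{thm:convergence}, handle Assumption \ref{assum:finite_u} via separability of $\mathbf{u}$ (each $u_m$ depends only on $p_m$, so $\mathbf{u}$ is a gradient of a sum of antiderivatives, with continuity from Whitt's continuity of queues), and control cycle lengths and workloads by coupling to a single aggregated server that receives every routed job. Where you genuinely diverge is the one step the paper flags as the crux: making the bound on $\E_{\mathbf{p}}\Vert\mathbf{G}\Vert^2$ \emph{uniform} over $\Delta$. The paper bounds $\Vert\mathbf{G}\Vert^2\le k\,\tilde L(\mathbf{p})^2(\xi+\kappa\tilde B(\mathbf{p}))^2$ using the single coupled queue, applies H\"older, and then proves that $\E_{\mathbf{p}}\tilde L(\mathbf{p})^r$ and $\E_{\mathbf{p}}\tilde B(\mathbf{p})^r$ ($r\le 4$) are \emph{continuous} in $\mathbf{p}$ --- via $\tilde Y(\mathbf{p}^{(n)})\darrow\tilde Y(\mathbf{p})$, Whitt's continuity theorem, and a dominated-convergence argument built on a carefully constructed $\epsilon$-mixture $\bar Y$ that stochastically dominates $\tilde Y(\mathbf{p}^{(n)})$ while keeping the mean below $1/\lambda$ --- and concludes by compactness of $\Delta$. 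You instead propose direct $\mathbf{p}$-free dominations: per-station reference queues in which all customers join (valid, and arguably more transparent for the workloads) plus a uniform-negative-drift first-passage argument for the cycle length. That route avoids the continuity-of-moments machinery entirely, which is a real simplification if it goes through.

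Two places in your version need more than assertion. First, the uniform fourth-moment bound on the cycle length from "uniform negative drift plus uniform increment moments" requires a \emph{quantitative} stopped-random-walk estimate (constants depending only on the drift gap and $\E[((Y-A)^+)^4]$); the qualitative statement that each $\E_{\mathbf{p}}[\tilde L^4]$ is finite does not by itself give a supremum over $\mathbf{p}$, and a crude stochastic domination by the queue with service $\max_m Y_m$ fails because that queue need not be stable --- this is exactly the pitfall the paper's $\bar Y$ construction is designed to avoid. Second, your final combination of the workload and cycle-length bounds is left schematic: once $X_j^{[m]}$ is dominated by a reference workload not tied to the cycle, the Cauchy--Schwarz bookkeeping for $\E_{\mathbf{p}}[(\sum_{j=1}^L X_j^{[m]})^2]$ is messier than the paper's single inequality $X_j^{[m]}\le\tilde B(\mathbf{p})$, and you should write it out to confirm that only fourth moments are needed. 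Neither issue is fatal, but both are the substance of the proof rather than routine details.
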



\subsection{Selective routing in queues with different buffer capacities}\label{sec:CR}
Consider a queueing network comprised of two servers, Server 1 that has no waiting room, and Server 2 that offers unlimited queuing capacity. Customers arrive at the system following a Poisson process with rate $\lambda$, with iid  service-length requirements distributed according to $F$. Let $Y\sim F$ represent some arbitrary service duration, and let $\E[Y]=1/\mu$. 

Upon arrival a customer cannot observe the system's state, and is given the option to \emph{sense}, that is, to make a costly attempt to obtain service by Server 1, at a fixed cost $c_s$. A customer that chooses to sense and finds Server 1 idle, immediately commences service, otherwise they are instantaneously sent to the end of the infinite-capacity queue and waits to be served by Server 2. The sensing option is offered only upon the customer's arrival, and the cost is incurred regardless of whether Server 1 is found idling or not. Customers who do not choose the sensing option do not incur a sensing cost and are sent directly to the end of the queue served by Server 2. All customers queueing for Server 2 incur a linear waiting cost at rate $c_w$ per unit time.

Let $\mathcal{A}=\{a_1, a_2\}$, where $a_1$ and $a_2$ represent the actions ``sense'' and ``not sense'' respectively. A strategy in this game is given by $\mathbf{p}=(p,1-p)$, where $p>0$ is the probability of choosing $a_1$. It is shown in \cite{HS2017} that when customers' service demand is exponentially distributed, the game admits a unique equilibrium strategy which can be efficiently approximated. Yet even for the special case of exponential services, a simple expression for the equilibrium strategy is not available, because the solution in \cite{HS2017} for the stationary distribution given a strategy $\mathbf{p}$ makes use of Cardano's formula. It is worth mentioning in this context that in general, the queue served by Server 2 cannot be treated as a standard M/G/1, because the effective arrival process to Server 2 is not renewal, as it depends on the state of Server 1. 

To approach the equilibrium using simulation we define the state as a pair $X=(X^{[1]},X^{[2]})$ where each component $X^{[m]}$ stands for the workload at Server $m\in\{1,2\}$. Given a realization $x=(x^{[1]},x^{[2]})$ of $X$, the components of the vector function $\overline{\mathbf{v}}(x)$ take the form
\[ \begin{split}
\overline{v}_1 (x) &= -c_s -c_w\cdot \mathit{1}(x^{[1]} > 0) \cdot x^{[2]} , \\
\overline{v}_2 (x) &= -c_w\cdot  x^{[2]}.
\end{split}\]

\begin{proposition}\label{prop:CR}
Assume the service-time distribution $F$ is chosen such that (i) $\lambda < \mu$; (ii) $\E [Y^4]<\infty$; and (iii) $\mathbf{u(p)}$ is continuous in $\mathbf{p}\in\Delta$. Then for any initial strategy $\mathbf{p}^{(1)}\in\Delta$ and step-size sequence $\{\gamma_n\}_{n\geq 1}$ satisfying Assumption  \ref{assum:gamma_n}, the SA algorithm defined in \eqref{eq:SA_iteration} converges almost surely to a set of equilibrium strategies, in the sense of Theorem \ref{thm:convergence}.
\end{proposition}

The simplicity of the model here sets up a convenient platform to demonstrate some practical heuristics that can enhance the algorithm's performance, which is what we focus on next.

\subsection{Variance reduction using control variates}

The variance reduction technique discussed here is a fundamental control-variate method based on \cite{RR1985}, that exploits our knowledge of the strategy $\mathbf{p}$ at the beginning of each iteration, as well as some model parameters used to construct the simulation. Adapted to our framework, the goal is to modify the estimator $\mathbf{G}$ in a way that does not introduce bias, but reduces its generalized variance. The underlying principle is the following: Suppose one can characterize an $l$-dimensional random vector $\mathbf{C}$, referred to as the control vector, with zero mean (entrywise), such that it is also strongly correlated with $\mathbf{G}$. Then an appropriate matrix of control coefficients $\mathbf{\Psi}\in \mathbb{R}^{l\times k}$ can be found, for which the new estimator $\mathbf{G}-\mathbf{\Psi}'\mathbf{C}$ is unbiased for the original unknown quantity, but has smaller generalized variance than that of $\mathbf{G}$. Mathematically, this means that we would like to choose $\mathbf{\Psi}$ such that $\mathbf{G}-\mathbf{\Psi}'\mathbf{C}$ satisfies
\[ 
\Ep[\mathbf{G}-\mathbf{\Psi}'\mathbf{C}] = \Ep[\mathbf{G}] = \mathbf{g(p)}, \quad\text{and}\quad \det(\Varp[\mathbf{G}-\mathbf{\Psi}'\mathbf{C}]) < \det(\Varp[\mathbf{G}]).
\]
Observe that unbiasedness is obtained irrespective of the choice of $\mathbf{\Psi}$. 
It is shown in \cite{RR1985} that the optimal choice for $\mathbf{\Psi}$ (in terms of minimal variance) is given by $\mathbf{\Psi}^* = \mathbf{\Sigma}_\mathbf{GC} \mathbf{\Sigma}^{-1}_\mathbf{CC}$, where $\mathbf{\Sigma}_\mathbf{CC} = \Varp[\mathbf{C}]$ is the variance-covariance matrix of the control vector $\mathbf{C}$, and $\mathbf{\Sigma}_\mathbf{GC} = \Covp(\mathbf{G,C})$ is the cross-covariance matrix of $\mathbf{G}$ with $\mathbf{C}$. In most practical applications, both $\mathbf{\Sigma}_\mathbf{CC}$ and $\mathbf{\Sigma}_\mathbf{GC}$ are unknown, and so $\mathbf{\Psi}^*$ has to be estimated from the data. Moreover, in the setting here the optimal $\mathbf{\Psi}^*$ varies with the strategy $\mathbf{p}$. Luckily, our numerical results show that significant variance reduction is obtained even when using a rough approximation of $\mathbf{\Psi}^*$ which is based on naive sampling. Our guiding intuition is that as the iteration index $n$ increases, the change in the strategy $\mathbf{p}^{(n)}$ becomes smaller. Given a constant strategy, it is known that the estimator for $\mathbf{\Psi}^*$ constructed from the sample covariance and cross-covariance matrices is likelihood maximizing (see \cite{A1962}, Section 4.3.1). 
 
At iteration $n>1$, let $\mathbf{G}^{(n)}$ be the estimator defined in (\ref{eq:G}), and $\mathbf{C}^{(n)}$ be the control vector. We use a naive choice for the matrix of control coefficients $\hat{\mathbf{\Psi}}^{(n)}$ by defining (note that $\mathbf{C}^{(1)}, \dots \mathbf{C}^{(n)}$ have mean zero):
\[ \hat{\mathbf{\Sigma}}^{(n)}_\mathbf{CC} = \frac{1}{n-1}\sum_{m=1}^n \mathbf{C}^{(m)} \left.\mathbf{C}^{(m)}\right.^{\prime}, \quad \hat{\mathbf{\Sigma}}^{(n)}_\mathbf{GC} = \frac{1}{n-1}\sum_{m=1}^n \mathbf{G}^{(m)} \left.\mathbf{C}^{(m)}\right.^{\prime}, \text{ and} \quad \hat{\mathbf{\Psi}}^{(n)} = \left.{\hat{\mathbf{\Sigma}}^{(n)}_\mathbf{GC}}\right.^{-1} \hat{\mathbf{\Sigma}}^{(n)}_\mathbf{CC}. 
\]
Thus, at every iteration $n>1$ we replace $\mathbf{G}^{(n)}$ in Equation (\ref{eq:SA_iteration}) by $\mathbf{G}^{(n)}-\left.\hat{\mathbf{\Psi}}^{(n)}\right.^{\prime} \mathbf{C}^{(n)}$. 

\setcounter{example}{1}\begin{example}
Consider the model presented in Section \ref{sec:CR}. We choose a 3-dimensional control vector $\mathbf{C}=(C_1, C_2, C_3)$. As prescribed in Section \ref{sec:converg}, given a strategy $\mathbf{p}=(p, 1-p)$, we generate $L$ observations $X_1, \dots, X_L$ based on a single regeneration cycle, and construct the original estimator $\mathbf{G} = \sum_{j=1}^{L}  \mathbf{\overline{v}}(X_j)$ introduced in (\ref{eq:G}). Let $D_1, \dots, D_L$, be a sequence of \emph{sensing indicators}, namely for $j=1, \dots, L$, $D_j\sim \text{Bernoulli}(p)$ is the indicator of the event ``the $j$-th customer chooses action $a_1$ (sense)''. Let $Y_j$ be the service demand of that $j$-th customer. Our control vector of choice, $\mathbf{C}$, is given by:
\[ 
\mathbf{C} = \begin{pmatrix} C_1 \\ C_2 \\ C_3 \end{pmatrix} = \sum_{j=1}^L\begin{pmatrix}  D_j - p \\ Y_j- 1/\mu \\ \mathit{1}(X_j^{[1]} > 0) - \frac{ \lambda p}{\mu + \lambda p} \end{pmatrix}.
\]
Because the underlying process is regenerative, we have for every $\mathbf{p}$,
\[ 
\Ep[\mathbf{C}] = \ell(\mathbf{p}) \cdot \begin{pmatrix} \Ep[D] - p \\ \Ep[Y]- 1/\mu \\ \Ep[\mathit{1}(X^{[1]} > 0)] - \frac{ \lambda p}{\mu + \lambda p} \end{pmatrix} = \mathbf{0},
\]
where $D\sim \text{Bernoulli}(p)$ is the sensing indicator of an arbitrary customer, $Y\sim F$ is an arbitrary service demand, and $X=(X^{[1]}, X^{[2]})$ is the stationary system state, given strategy $\mathbf{p}=(p, 1-p)$. It is therefore immediate that $\Ep[D] = p$ and  $\Ep[Y]= 1/\mu$, and because Server 1 can essentially be modeled as an M/G/1/1 loss-system with arrival rate $\lambda p$ and service rate $\mu$, we also have in stationarity that $\Ep[\mathit{1}(X^{[1]} > 0)] = \frac{ \lambda p}{\mu + \lambda p}$. This specific choice of the control vector $\mathbf{C}$ is led by the understanding that customers' actions, their realized service demand, as well as the state of Server 1 observed by the different customers during the simulated cycle, are all affecting the utility sum $\mathbf{G}$ in that cycle.
 \hfill$\diamond$\end{example}

Figure \ref{fig:cr-game} depicts the performance of the algorithm incorporating control variates and dynamic step sizes (described below) for the model described in Section \ref{sec:CR}. Further information about the efficiency and selection of control coefficients for the method of control variates in vectored-output simulations can be found in \cite{RR1985} and \cite{YN1992}.

\subsection{Dynamic step size selection}
Next we consider the selection of the step-size sequence, $\{ \gamma_n \}_{n\geq 1}$. Our main idea underlying the SA algorithm is that instead of estimating $\mathbf{u(p)}$, to which unbiased estimators in general are not known, one can estimate  $\ell(\mathbf{p})\mathbf{u(p)}$ with no bias, based on a single regeneration cycle, and use this estimator to determine the progression of the algorithm. Yet, as noted in Remark \ref{rem:choice-of-ell}, the function $\ell(\mathbf{p})$ chosen is not exclusive; If $h:\Delta\to\mathbb{R}_+$ is a real continuous function such that $h(\mathbf{p})\geq a$ for some constant $a>0$, and in addition if one is endowed with a finite-variance unbiased estimator for  $h(\mathbf{p})\mathbf{u(p)}$, then this estimator can be used instead of the one suggested in Section \ref{sec:SA}, and the convergence result in Theorem \ref{thm:convergence} carries over. The exact form of $h(\mathbf{p})$ need not be expressed or specified, neither should it be introduced as input to the algorithm. 

In theory, if $\ell(\mathbf{p})$ can be computed, then the estimator $\mathbf{G}/\ell(\mathbf{p})$ can be taken to directly estimate $\mathbf{u(p)}$ without bias. This is essentially equivalent to dynamically choosing the step size at iteration $n$ to be $\gamma_n/\ell(\mathbf{p}^{(n)})$ instead of the original $\gamma_n$. The dynamic choice of strategy-dependent step size has the advantage of ``neutralizing'', in some sense, the unwanted impact of the cycle lengths on the algorithm's progression.
Of course, in applications, exact knowledge of $\ell(\mathbf{p})$ is an unrealistically strong requirement. However, even a coarse approximation $\tilde{\ell}(\mathbf{p}) \approx \ell(\mathbf{p})$ can impose a similar neutralizing effect, leading to substantial efficiency improvement. Thus, instead of using the estimator $\mathbf{G}$ to estimate $\ell(\mathbf{p})\mathbf{u(p)}$, given an approximation $\tilde{\ell}(\mathbf{p})$ we can use the estimator $\mathbf{G}/\tilde{\ell}(\mathbf{p})$ to estimate $(\ell(\mathbf{p})/\tilde{\ell}(\mathbf{p}))\mathbf{u(p)}$, or equivalently, divide the step size by $\tilde{\ell}(\mathbf{p})$. We believe this method to be effective especially in heavily loaded systems, in which the mean cycle length is typically large, and in addition, many approximation techniques tend to become more accurate under this regime. 

\begin{remark}
Natural approximations for ${\ell}(\mathbf{p})$ can be derived from available approximations for mean busy periods in queues. For instance, many approximations, in addition to some special-case exact expressions, are available for the mean busy period in the GI/G/1 queue (see, for instance, \cite{BN1992}). The length of an arbitrary regeneration cycle $L$ of a GI/G/1 can be interpreted as the number of service completions during the corresponding busy period $B$. Thus, the relation between $L$ and $B$ is expressed by $B=\sum_{j=1}^L Y_j$, where $Y_1, \dots, Y_L$ are the service lengths during the cycle. This relation implies $\E[L] = \E[B]/E[Y]$, where $Y$ is the length of an arbitrary service.
\end{remark}

\setcounter{example}{1}\begin{example}\textbf{{\rm (Continued)}}
In the model of Section \ref{sec:CR}, a back-of-the-envelope approximation for $\ell(\mathbf{p})$ is obtained by (erroneously) treating the two nodes, Server 1 and 2, as two independent Markovian systems with service rate $\mu$ in both systems. Thus, given $\mathbf{p}=(p,1-p)$, we have one M/M/1/1 system with arrival rate $\lambda^{[1]} = p\lambda$, and another independent M/M/1 system with arrival rate $\lambda^{[2]}=\lambda \cdot (1-p+\lambda^{[1]}/(\lambda^{[1]}+\mu))$. The stationary probability of finding each server idling is $\pi_0^{[1]} = \mu/(\mu+\lambda^{[1]})$ in the M/M/1/1 system and $\pi_0^{[2]} = 1-\lambda^{[2]}/\mu$ in the M/M/1 system, and by independence the joint distribution of the events is $\pi_0^{[1]}\pi_0^{[2]}$. By the PASTA principle, this coincides with the probability that a customer finds both servers idle upon arrival, therefore the mean number of arrivals between two subsequent such incidents is given by $\tilde{\ell}(\mathbf{p})=1/(\pi_0^{[1]}\pi_0^{[2]})$, and we shall use this quantity to approximate the true mean cycle length $\ell(\mathbf{p})$.
\hfill$\diamond$\end{example}

In the following numerical example, we employ variance reduction and dynamic step-size selection on the model described in Section \ref{sec:CR}. Services are exponentially distributed, $\mu$ and $c_w$ are normalized to 1, $\lambda=0.99$ and $c_s=5$. The initial strategy for the SA algorithm is $\mathbf{p}^{(1)}=(1/2, 1/2)$. In Figure \ref{fig:cr-game} we compare, over the first $N=5\times 10^4$ iterations, the performance of the crude implementation of the algorithm with that of the refined version. 
For the sequence of strategies, $\{ \mathbf{p}^{(n)} \}_{n\geq 1} = \{ ({p}^{(n)}, 1-{p}^{(n)}) \}_{n\geq 1}$, the figure depicts the convergence of the sequence ${p}^{(n)}$ to the (unique) equilibrium probability ${p}^e$, that corresponds with the unique equilibrium strategy $\mathbf{p}^e=(p^e, 1-p^e)$. The estimator used at iteration $n$ in the refined version (orange) is $\mathbf{G}^{(n)}-\left.\hat{\mathbf{\Psi}}^{(n)}\right.^{\prime} \mathbf{C}^{(n)}$, and the step size at iteration $n$ is $(1/n)\tilde{\ell}(\mathbf{p}^{(n)})$.
In the crude version (blue), we keep our original estimator $\mathbf{G}^{(n)}$, with comparable, non-dynamic step size given by $(1/n)\tilde{\ell}(\mathbf{p}^{(1)})$ for each $n$. For reference, we plot the correct equilibrium (red dashed horizontal line), which, for exponential services can be approached numerically up to any arbitrary precision level using the machinery developed in \cite{HS2017}.
\begin{figure}[H]
\centerline{\includegraphics[scale=.5]{"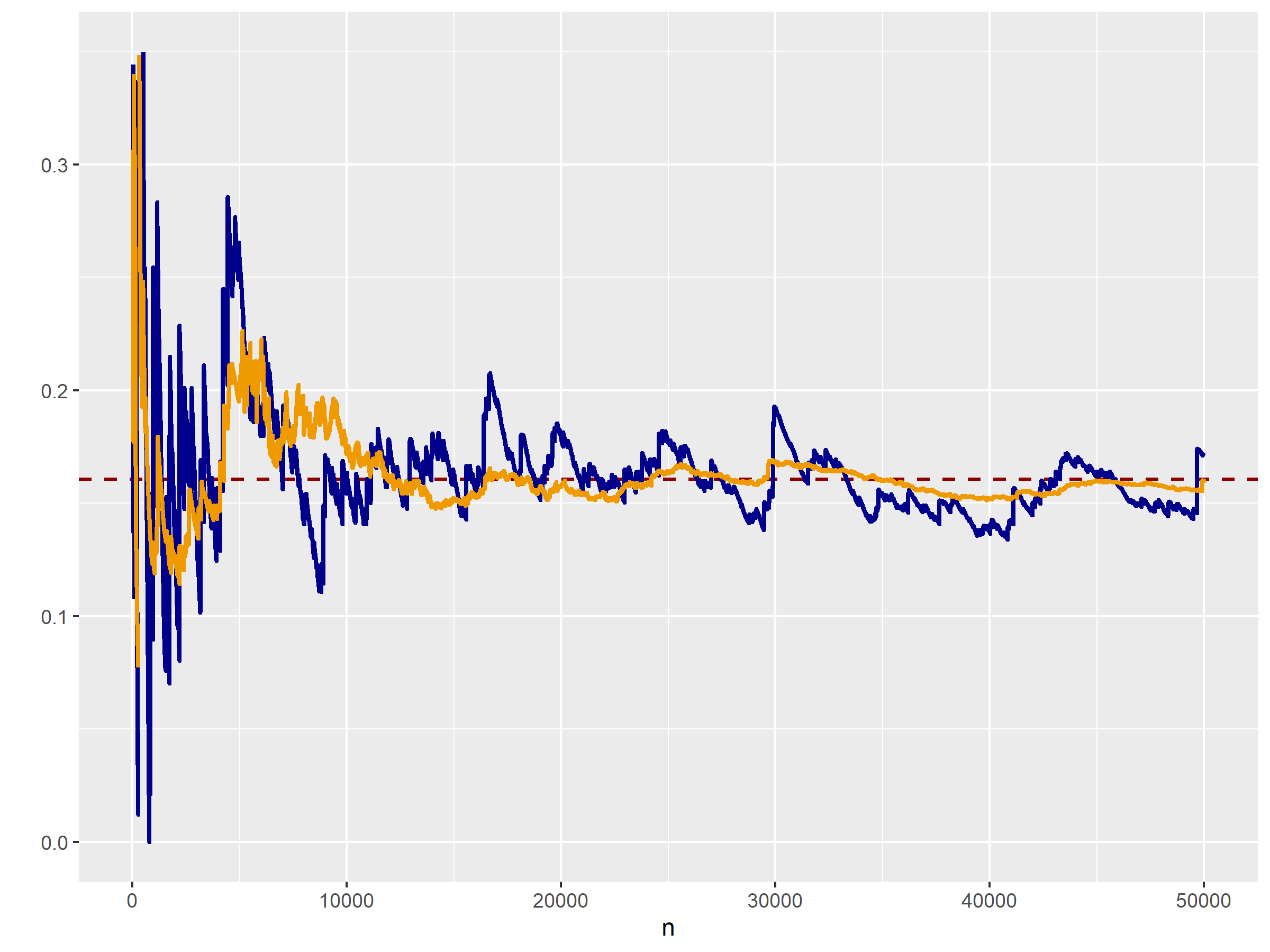"}}
\caption{Convergence of the SA algorithm as $n\to\infty$. The first coordinate of $\mathbf{p}^{(n)}=({p}^{(n)}, 1- {p}^{(n)})$, i.e., the sensing probability at stage $n$, is plotted vs. $n$. The blue curve corresponds to the crude algorithm's progression, and the orange corresponds to the refined version. The red dashed line depicts the correct equilibrium sensing probability, $p^e$, computed numerically in 8-digit precision.} \label{fig:cr-game}
\end{figure}

\section{Games with state information} \label{sec:observable}

In this section, we explain how to amend our SA algorithm to cope with so-called observable models. More precisely, we assume that customers, at the moment they arrive to the system and make their decision, have access to an information signal, which is a function of the system state. Unlike the approach described in \ref{sec:comment-obs}, the one we take here does not suffer from the same issue of dimensionality.

From a mathematical perspective, the previous formulation described in Section \ref{sec:model} can be seen as a special case of the model below when the function mapping states to signals is constant (i.e., non-informative). Although, from a nomenclature perspective, this understanding is somewhat inconsistent with how the terms 'observable' and 'unobservable' are  used in the literature to describe two disjoint classes of games. 
In what follows, we treat a strategy as a function from signals to (mixed) actions, and so, the dimension of our problem grows proportionally to the number of possible signals. \textcolor{black}{ Thus, to enable a direct implementation of our framework, we assume throughout that the number of possible signals is finite (if the actual information space is continuous then a suitable discretization of the information set is required).}

The new formulation suggests a practical method for the computation of equilibrium strategies in the observable GI/G/1 queue, a generalization of Naor's \cite{N1969} model, which we use as a guiding example in this section. The special case with Poisson arrivals, which itself is a generalization of \cite{N1969}, was thoroughly studied in \cite{K2011}. 

\subsection{Model description}
As before, we assume a renewal sequence of customer arrivals $\{A_n\}_{n\geq 1}$, each customer chooses an action from the set $\mathcal{A}=\{a_1, \dots, a_k\}$. The state of the system at any moment in time is represented by a vector $x$ of buffer contents in the state space $\mathcal{X}\subset \mathbb{R}_+^d$, in which $0^d\in\mathcal{X}$ is an element. Upon arrival, a customer is provided with an information signal which depends on the state at which they arrive. Let $\mathcal{I}$ be a countable set of information signals, thus, arriving at state $x\in\mathcal{X}$, a customer receives information $\psi(x)$, where $\psi:\mathcal{X}\to\mathcal{I}$ is a mapping from states to signals. We therefore reintroduce the concept of a strategy, now taking the interpretation of a  \emph{behavioral strategy}, i.e., a mapping $\mathbf{p}:\mathcal{I}\to \Delta$ from signals to distributions over actions. We denote by $\mathcal{P}$ the set of possible strategies. Hence, given $\mathbf{p}\in\mathcal{P}$, a customer who arrives at state $x\in\mathcal{X}$ will choose their action according to the distribution $\mathbf{p}(\psi(x)) = \big(p_1(\psi(x)), \dots, p_k(\psi(x))\big)\in \Delta$, which assigns a probability $p_j(\psi(x))$ for each action $a_j\in\mathcal{A}$. Consistent with the previous sections, we assume each strategy $\mathbf{p}\in\mathcal{P}$ gives rise to a regenerative system-state process $\{X_n(\mathbf{p})\}_{n\geq 1}$ embedded at arrival instants, with $0^d$ being a regenerative state and  $X_1(\mathbf{p})=0^d$. Thus, given $\mathbf{p}\in\mathcal{P}$, the stationary state $X(\mathbf{p})$, the cycle length $L(\mathbf{p})$, and the $r$-th moment of the cycle length $\ell^r(\mathbf{p})$, all follow the definitions introduced in Section \ref{sec:model}. Consistent with Sections \ref{sec:model} and \ref{sec:SA}, when facing state $x\in\mathcal{X}$ (hence a signal $\psi(x)$ is observed) and a realization $y$ of a random outcome $Y$, the value vector is $\mathbf{v}(x, y)=\big( v_1(x, y), \dots, v_k(x, y) \big)$, and  we let $\mathbf{\overline{v}}(X)= \E_Y \left[\mathbf{v}\big(X, Y\big) \: \big| \: X \right]$.

Assuming the state is drawn from the stationary distribution induced by $\mathbf{p}$, the signal observed is given by the r.v. $\psi(X(\mathbf{p}))$ supported on $\mathcal{I}$. We denote the (point) probability function of $\psi(X(\mathbf{p}))$ by $\xi_{\mathbf{p}}(s)=\P_\mathbf{p}(\psi(X(\mathbf{p}))=s )$ and its support by $\mathcal{I}(\mathbf{p})=\{s\in \mathcal{I} \text{ s.t. } \xi_{\mathbf{p}}(s)>0\}$. 
For conciseness, we ignore any irrelevant signals, i.e., we assume $\mathcal{I}=\bigcup_{\mathbf{p}\in\mathcal{P}}\mathcal{I}(\mathbf{p})$. 

For any signal $s\in \mathcal{I}(\mathbf{p})$ we define, analogous to \eqref{eq:up}, the ($k$-dimensional) \emph{conditional} expected value vector as
\begin{equation}\label{eq:up_i}
\mathbf{u}(\mathbf{p}\mid s)=\E_\mathbf{p}\bigg[ \mathbf{v}\big(X(\mathbf{p}), Y\big) \:\bigg|\: \psi(X(\mathbf{p}))=s  \bigg]=\frac{\E_\mathbf{p}\bigg[ \mathbf{v}\big(X(\mathbf{p}), Y\big)\cdot\mathit{1}(\psi(X(\mathbf{p}))=s )\bigg]}{\xi_{\mathbf{p}}(s)}.
\end{equation}
For completeness, when $s\notin \mathcal{I}(\mathbf{p})$ we let $\mathbf{u}(\mathbf{p}\mid s)= \mathbf{0}$. 
Finally, the best response set for a strategy $\mathbf{p}$ is given by
\begin{equation} \label{EQN:br-def-observable}
\mathcal{BR}(\mathbf{p})=\argmax_{\mathbf{q}\in\mathcal{P}} \sum_{s\in \mathcal{I}(\mathbf{p})} \xi_{\mathbf{p}}(s) \cdot \mathbf{u}(\mathbf{p}\mid s)'\mathbf{q}(s) ,
\end{equation}
and similarly to Definition \ref{def:SNE}, we have that $\mathbf{p}$ is a symmetric Nash equilibrium if $\mathbf{p}\in\mathcal{BR}(\mathbf{p})$.
The objective in the optimization problem of (\ref{EQN:br-def-observable}) can be thought of as the ex-ante expected utility (prior to receiving state information) of a customer who plays $\mathbf{q}$, when all other customers play $\mathbf{p}$. It can be further seen that the optimization problem is separable, in the sense that it can be solved independently for every $s\in\mathcal{I}(\mathbf{p})$. In other words, to constitute a best response, a strategy has to prescribe an optimal play, separately, under any information signal observed.

\subsection{Simulation and stochastic approximation}
Again, we construct our utility estimator by simulating a single regenerative cycle starting at the arrival of a customer to state $0^d$, with $X_j$, $j=1, \dots, L$, being the state at the $j$-th arrival in that cycle, and $L$ being the cycle length.
To apply the SA algorithm we construct an estimator for every $s\in\mathcal{I}$ defined by \begin{equation*}
\mathbf{G}(s) = \sum_{j=1}^{L} \mathbf{\overline{v}}(X_j)\cdot \mathit{1}(\psi(X_j)=s ).
\end{equation*}
Assuming $\ell^2(\mathbf{p})<\infty$, an immediate extension of Lemma \ref{lemma:unbiased_G} yields, for all $s\in\mathcal{I}$,
\begin{equation}\label{eq:unbiased-G-obs}
\E_{\mathbf{p}}[\mathbf{G}(s)] = \ell(\mathbf{p}) \cdot \xi_{\mathbf{p}}(s) \cdot \mathbf{u}(\mathbf{p}\mid s).
\end{equation}
Note that if $s\notin\mathcal{I}(\mathbf{p})$, then $\xi_{\mathbf{p}}(s)=0$, and equality in \eqref{eq:unbiased-G-obs} trivially holds as we have that
$\mathbf{G}(s)=0$ with probability 1.

In the case with state information, our algorithm constructs at each iteration a point estimate $\mathbf{G}(s)$ for every $s\in\mathcal{I}$. Thus, for practical reasons we make the assumption hereafter that $\mathcal{I}$ is a finite set. In many applications, a trivial bound on $|\mathcal{I}|$ can be derived, as demonstrated in Example \ref{exmp:obs-gg1} below. In other applications, different heuristics can be applied to cap the number of signals, with arguably ``minimal'' effect on the best-response function, however such considerations exceed the scope of the this paper. 

The progression of the SA algorithm is similar to the one described in the previous sections:
Let the step-size sequence be $\{\gamma_n\}_{n\geq 1}$ and the initial strategy be $\mathbf{p}^{(1)}\in\mathcal{P}$. At iteration $n$, with a corresponding strategy $\mathbf{p}^{(n)}$, we use one regeneration cycle to construct $|\mathcal{I}|$ estimators, $\{\mathbf{G}^{(n)}(s)\}_{s\in\mathcal{I}}$. The update at the next iteration for every $s\in\mathcal{I}$ is done by
\begin{equation}\label{eq:SA-iteration-obs}
    \mathbf{p}^{(n+1)}(s) = \pi_\Delta \left( \mathbf{p}^{(n)}(s) + \gamma_n \mathbf{G}^{(n)}(s)\right),\ n\geq 0,
\end{equation}
The following assumptions are sufficient for the convergence of the algorithm to an equilibrium strategy:

\renewcommand\theassumption{B\arabic{assumption}}
\setcounter{assumption}{0}
\begin{assumption}\label{assum:finite_El2-obs}
The second moment of the cycle length $L(\mathbf{p})$ is bounded for every strategy;  $\ell^2(\mathbf{p})<\infty$ for all $\mathbf{p}\in \mathcal{P}$.
\end{assumption}
\begin{assumption}\label{assum:finite_EG2-obs}
The second moment of the estimator $\mathbf{G}(s)$ is uniformly bounded on $\mathcal{P}$; \\ $\sup_{\mathbf{p}\in\mathcal{S}}\E_{\mathbf{p}} \Vert \mathbf{G}(s) \Vert^2<\infty$ for all $s\in \mathcal{I}$. 
\end{assumption}
\begin{assumption}\label{assum:finite_u-obs} For all $s\in \mathcal{I}$,  $\xi_{\mathbf{p}}(s) \cdot \mathbf{H}\mathbf{u}(\mathbf{p}\mid s)$ is continuous with respect to $\mathbf{p}$, and there exists a function $u^*:\mathcal{P}\to\mathbb{R}$ such that for all  $s\in\mathcal{I}$, $-\xi_{\mathbf{p}}(s) \cdot \mathbf{H}\mathbf{u}(\mathbf{p}\mid s) = \nabla_{\mathbf{p}(s)}u^*$.
\end{assumption}
\noindent Theorem \ref{thm:convergence-obs} introduces the key result of the section, which is a generalization of Theorem \ref{thm:convergence}:
\begin{theorem}\label{thm:convergence-obs}
Suppose Assumptions \ref{assum:finite_El2-obs}--\ref{assum:finite_u-obs} and \ref{assum:gamma_n} are satisfied. As $n\to\infty$, $\mathbf{p}^{(n)}\asarrow \mathcal{S}^e$, where $\mathcal{S}^e\subseteq\mathcal{P}$ is a set of equilibrium strategies, namely, $\mathbf{p}\in\mathcal{BR}(\mathbf{p})$ for every $\mathbf{p}\in \mathcal{S}^e$.
\end{theorem}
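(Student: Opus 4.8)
The plan is to reduce the observable problem to the unobservable framework underlying Theorem \ref{thm:convergence} by \emph{stacking} the per-signal recursions into a single projected stochastic-approximation scheme on a product of simplices, and then to reuse the projected-ODE machinery of \cite[Ch.~5]{KY2003}. Enumerating the finite signal set $\mathcal{I}=\{s_1,\dots,s_{|\mathcal{I}|}\}$, I would collect the behavioral strategy into the stacked vector $\tilde{\mathbf{p}}=(\mathbf{p}(s_1)',\dots,\mathbf{p}(s_{|\mathcal{I}|})')'\in\Delta^{|\mathcal{I}|}\subset\mathbb{R}^{k|\mathcal{I}|}$, the stacked estimator $\tilde{\mathbf{G}}=(\mathbf{G}(s_1)',\dots,\mathbf{G}(s_{|\mathcal{I}|})')'$, and the stacked drift $\tilde{\mathbf{g}}(\tilde{\mathbf{p}})$ whose $s$-block is $\ell(\mathbf{p})\,\xi_{\mathbf{p}}(s)\,\mathbf{u}(\mathbf{p}\mid s)$, in accordance with \eqref{eq:unbiased-G-obs}. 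The update \eqref{eq:SA-iteration-obs} then becomes the single projected recursion $\tilde{\mathbf{p}}^{(n+1)}=\tilde{\pi}(\tilde{\mathbf{p}}^{(n)}+\gamma_n\tilde{\mathbf{G}}^{(n)})$, where $\tilde{\pi}$ denotes the block-diagonal projection onto $\Delta^{|\mathcal{I}|}$.

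I would then establish the noise structure. Since each iteration simulates a fresh regeneration cycle under the current strategy, the extension \eqref{eq:unbiased-G-obs} of Lemma \ref{lemma:unbiased_G} gives $\E[\tilde{\mathbf{G}}^{(n)}\mid\mathcal{F}_n]=\tilde{\mathbf{g}}(\tilde{\mathbf{p}}^{(n)})$ for the natural filtration $\mathcal{F}_n=\sigma(\mathbf{p}^{(1)},\dots,\mathbf{p}^{(n)})$, so the centered term $\tilde{\mathbf{G}}^{(n)}-\tilde{\mathbf{g}}(\tilde{\mathbf{p}}^{(n)})$ is a martingale difference. Summing the per-signal bounds of Assumption \ref{assum:finite_EG2-obs} over the \emph{finitely many} signals yields $\sup_{\mathbf{p}\in\mathcal{P}}\E\Vert\tilde{\mathbf{G}}\Vert^2<\infty$, the stacked analogue of Assumption \ref{assum:finite_EG2}; finiteness of $\mathcal{I}$ is essential here.

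Next I would verify the hypotheses of the constrained stochastic-approximation theorem. The component of the stacked projected drift that drives the mean ODE is the $\mathcal{H}$-projected block $\ell(\mathbf{p})\,\xi_{\mathbf{p}}(s)\,\mathbf{H}\mathbf{u}(\mathbf{p}\mid s)$, which is continuous by Assumption \ref{assum:finite_u-obs}; the positive scalar $\ell(\mathbf{p})\geq 1$ is common to all blocks and plays exactly the role it plays in Theorem \ref{thm:convergence} (cf.\ Remark \ref{rem:choice-of-ell}). Consequently the mean ODE reads $\dot{\tilde{\mathbf{p}}}=-\ell(\mathbf{p})\nabla_{\tilde{\mathbf{p}}}u^*+\tilde{\mathbf{z}}$, with $\tilde{\mathbf{z}}$ the boundary-reflection term and $u^*$ the potential of Assumption \ref{assum:finite_u-obs}, whose $s$-block gradient is $-\xi_{\mathbf{p}}(s)\mathbf{H}\mathbf{u}(\mathbf{p}\mid s)$. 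Since $\frac{d}{dt}u^*(\tilde{\mathbf{p}})=-\ell(\mathbf{p})\Vert\nabla_{\tilde{\mathbf{p}}}u^*\Vert^2+(\nabla u^*)'\tilde{\mathbf{z}}\leq 0$ (the first term nonpositive because $\ell\geq 1$, the second because the reflection points inward), $u^*$ is a Lyapunov function, and the theorem of \cite[Ch.~5]{KY2003} yields, under Assumption \ref{assum:gamma_n}, that $\tilde{\mathbf{p}}^{(n)}$ converges almost surely to the set of stationary points of the projected ODE.

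Finally I would identify this stationary set with the equilibrium set $\mathcal{S}^e$. Because both the projection and the potential decompose block-wise, a point is stationary iff, for each signal $s$, the $s$-block drift projected onto the tangent cone of $\Delta$ vanishes. For a recurrent signal ($\xi_{\mathbf{p}}(s)>0$) that block drift is a strictly positive multiple of $\mathbf{u}(\mathbf{p}\mid s)$, so the per-signal analogue of Lemma \ref{lem:equilibrium-equivalence} gives $\mathbf{p}(s)\in\argmax_{\mathbf{q}(s)\in\Delta}\mathbf{u}(\mathbf{p}\mid s)'\mathbf{q}(s)$; invoking the separability of $\mathcal{BR}$ over signals in \eqref{EQN:br-def-observable} then yields $\mathbf{p}\in\mathcal{BR}(\mathbf{p})$. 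I expect the main obstacle to be precisely the recurrent/transient dichotomy: at a transient signal ($\xi_{\mathbf{p}}(s)=0$) the block drift vanishes, so that coordinate of the iterate need not converge, and one must argue both that this is harmless for the equilibrium condition (which is vacuous there) and that the drift stays continuous as $\xi_{\mathbf{p}}(s)$ crosses zero. This is exactly what the product form $\xi_{\mathbf{p}}(s)\,\mathbf{H}\mathbf{u}(\mathbf{p}\mid s)$ in Assumption \ref{assum:finite_u-obs} is designed to control, and the argument should be organized around it.
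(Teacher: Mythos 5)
Your proposal follows essentially the same route as the paper's proof in Appendix D: it recasts the per-signal updates as a single projected recursion on the product of simplices (the paper uses the equivalent $k\times|\mathcal{I}|$ matrix form with the map $\bar{\mathbf{u}}$ whose $s$-th column is $\xi_{\mathbf{p}}(s)\mathbf{Hu}(\mathbf{p}\mid s)$), verifies the Kushner--Yin conditions by summing the per-signal moment bounds over the finite signal set and absorbing $\ell(\mathbf{p}^{(n)})$ into the step size, uses $u^*$ from Assumption \ref{assum:finite_u-obs} as a Lyapunov function for the projected ODE, and identifies stationary points with equilibria via the per-signal analogue of Lemma \ref{lem:equilibrium-equivalence} (the paper's Lemma \ref{lem:aux-lemma-observable}), with the transient-signal case handled exactly as you indicate.

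One local step would fail as written: in the Lyapunov computation you claim $(\nabla u^*)'\tilde{\mathbf{z}}\leq 0$ ``because the reflection points inward.'' The variational inequality for projection onto a convex set gives $(\mathbf{Hu})'\mathbf{z}\leq-\Vert\mathbf{z}\Vert^2\leq 0$, hence $(\nabla u^*)'\tilde{\mathbf{z}}=-(\mathbf{Hu})'\mathbf{z}\geq 0$ — the reflection term has the \emph{opposite} sign to the one you assert, so the two summands are not separately nonpositive. The correct argument, used in the paper's Lemma \ref{lem:limit-points-are-equilibria} and reused for the observable case, bounds the reflection term in magnitude: $\Vert\mathbf{z}\Vert\leq\Vert\mathbf{Hu}\Vert$ (since $\mathbf{p}$ itself is feasible) and Cauchy--Schwarz give $(\mathbf{Hu})'\mathbf{z}\geq-\Vert\mathbf{Hu}\Vert^2$, so the gradient term dominates and $\frac{d}{dt}u^*\leq 0$, with strict inequality at non-stationary points. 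With that repair your argument matches the paper's.
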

The proof of Theorem \ref{thm:convergence-obs} is similar to that of Theorem \ref{thm:convergence}, after introducing a suitable surrogate best-response function, analogous to the one defined in Equation (\ref{EQN:f-def}). This is described in further elaboration in Appendix \ref{sec:extensions-obs}. Noticeably, Theorem \ref{thm:convergence-obs} also implies that an equilibrium strategy exists. In fact, if one only cares to prove that an equilibrium strategy exists, then only the continuity of $\xi_{\mathbf{p}}(s) \cdot \mathbf{H}\mathbf{u}(\mathbf{p}\mid s)$ is needed (see Appendix \ref{sec:extensions-obs}). 

As in Section \ref{sec:conv}, the existence of a potential function $u^*$, as referred to in Assumption \ref{assum:finite_u-obs}, ensures that (with probability 1) in the limit, the iterate will not oscillate between non-equilibrium points. In the absence of Assumption \ref{assum:finite_u-obs}, even a non-stochastic variant of the algorithm can exhibit such undesirable oscillations in the limit.
Although the implementation of the algorithm does not require $u^*$ to be specified, verifying Assumption \ref{assum:finite_u-obs} can be difficult in practical settings. It is nevertheless the case that if the limit $\lim_{n\to\infty}\mathbf{p}^{(n)}$ exists then it must satisfy the equilibrium condition, regardless of whether Assumption \ref{assum:finite_u-obs} holds or not.

\begin{example} \label{exmp:obs-gg1}
Consider a first-come first-served GI/G/$1$ queue with observable queue lengths: The state $x=(x^{[1]}, x^{[2]}, \dots)$ is a vector of residual service times, where $x^{[i]}$ is the residual service of the $i$-th customer in line, which, for $i=1$, is that of the one in service. For simplicity, we assume here that service times are strictly positive iid random variables with mean $1/\mu < \infty$. Customer utility when joining the queue is linear in the waiting time, and is equal to zero when balking. Formally,  $\mathbf{v}(x, y)=(v_1(x, y), v_2(x, y))=(R-C\cdot(y+\Vert x \Vert_1), 0)$ for positive constants $R,C>0$, implying that for a random state $X$, $\overline{\mathbf{v}}(X)=(R-C\cdot(1/\mu+\Vert X \Vert_1), 0)$. 

The information available to a customer arriving at state $x$ is the number of jobs in the system, $\Vert x\Vert_0=|\{i \text{ s.t. } x^{[i]} > 0\}|$. Importantly, following the FCFS assumption, any customer not in service has an expected residual service time of $1/\mu$. Hence, from an arriving customer's standpoint, the (state-dependent) utility function $\overline{\mathbf{v}}$ can be further simplified into $\overline{\mathbf{v}}(X) = (R-C\cdot(\Vert X\Vert_0/\mu+X^{[1]}), 0)$. It is therefore immediate that when $\Vert x\Vert_0 \geq R\mu / C$, an arriving customer strictly prefers balking over joining. By defining $K = \lfloor R\mu / C \rfloor $ we can thus \emph{assume} that customers balk when observing $K+1$ (or more) customers in queue, and we restrict attention to decision epochs, which are the arrival instants of customers to states in $\mathcal{X}=\{x \text{ s.t. } \Vert x\Vert_0\leq K \}$. This implies that at decision epochs, the state-information function $\psi(x)=\Vert x\Vert_0$ defines a finite set of possible information signals (queue lengths), $\mathcal{I}=\{0, \dots, K\}$. A strategy $\mathbf{p}$ is a mapping from $\mathcal{I}$ to the two-dimensional simplex, $\mathbf{p}(s)=(p(s), 1-p(s)) \in \Delta$, with $p(s)\in[0,1]$ being the probability of joining when $s\in\mathcal{I}$ customers are observed. Hence, $\xi_{\mathbf{p}}(s)$ is the stationary probability of observing $s\in\mathcal{I}$ customers in queue at a decision epoch, provided all customers play according to $\mathbf{p}$. 

Given $\mathbf{p}$, the dynamics of the process imply that if customers balk with probability $1$ at queue length ${s}$, i.e., if $p(s)=0$, then any queue length $\tilde{s}>s$ is transient. Put formally, we have for all $\mathbf{p}\in\mathcal{P}$, $\mathcal{I}(\mathbf{p}) = \{ s\in\mathcal{I} \text{  s.t } \forall \tilde{s}<s, p(\tilde{s})>0 \}$. An equilibrium, adapted to this example, is a strategy $\mathbf{p}\in\mathcal{P}$ such that for all $s\in\mathcal{I}(\mathbf{p})$, if $u_1(\mathbf{p}\mid s)>0$ then $p(s)=1$, and if $u_1(\mathbf{p}\mid s)<0$ then $p(s)=0$. We note that if $\mathbf{p}$ is an equilibrium, there could be at most one $s\in\mathcal{I}(\mathbf{p})$ satisfying $u_1(\mathbf{p}\mid s)<0$, which also constitutes a threshold on the length of the queue (as long as customers play according to $\mathbf{p}$). 

Figure \ref{fig:obs-GG1} depicts a simulation in which we set $C=1$ and $R=1.7$, implying that $K=1$ and $\mathcal{I}=\{0,1\}$. Thus, every customer that observes two (or more) other customers automatically balks. In both panels, interarrival times are exponentially distributed with mean $1$, which is equal to the mean service time, yet the service distribution varies between the two panels. In panel (a), services are exponentially distributed, hence it is a particular case of Naor's observable M/M/1 model (\cite{N1969}) in which the unique equilibrium, $\{\mathbf{p}^e(s)\}_{s\in \{0,1\}}=\{(p^e(s), 1-p^e(s))\}_{s\in \{0,1\}}$ is easily found, with $p^e(0) = 1$ and $p^e(1) = 0$, i.e., join if and only if the system is empty. In panel (b), services are uniformly distributed over $[0,2]$. We note that the uniform distribution is of a decreasing mean residual lifetime, and by Proposition 3.1 in Kerner (\cite{K2011}) the equilibrium exists uniquely.
In both experiments we run the simulation for $N=10^5$ iterations of the algorithm, with $\gamma_n=2/n$ and initial strategy $\mathbf{p}^{(1)}(s)=(1/2, 1/2)$ for all $s\in\{0,1\}$. Interestingly, in the right panel, the equilibrium to which the algorithm converges is of a \emph{randomized threshold} type; when a customer observes another customer in service, they join with probability $p^{(N)}(1)\approx 0.37$, in accordance with the analysis and intuition provided in \cite{K2011}.
\end{example}

\begin{figure}[H]
\centering
     \begin{subfigure}[b]{0.45\textwidth}
         \centering
         \includegraphics[scale=.7]{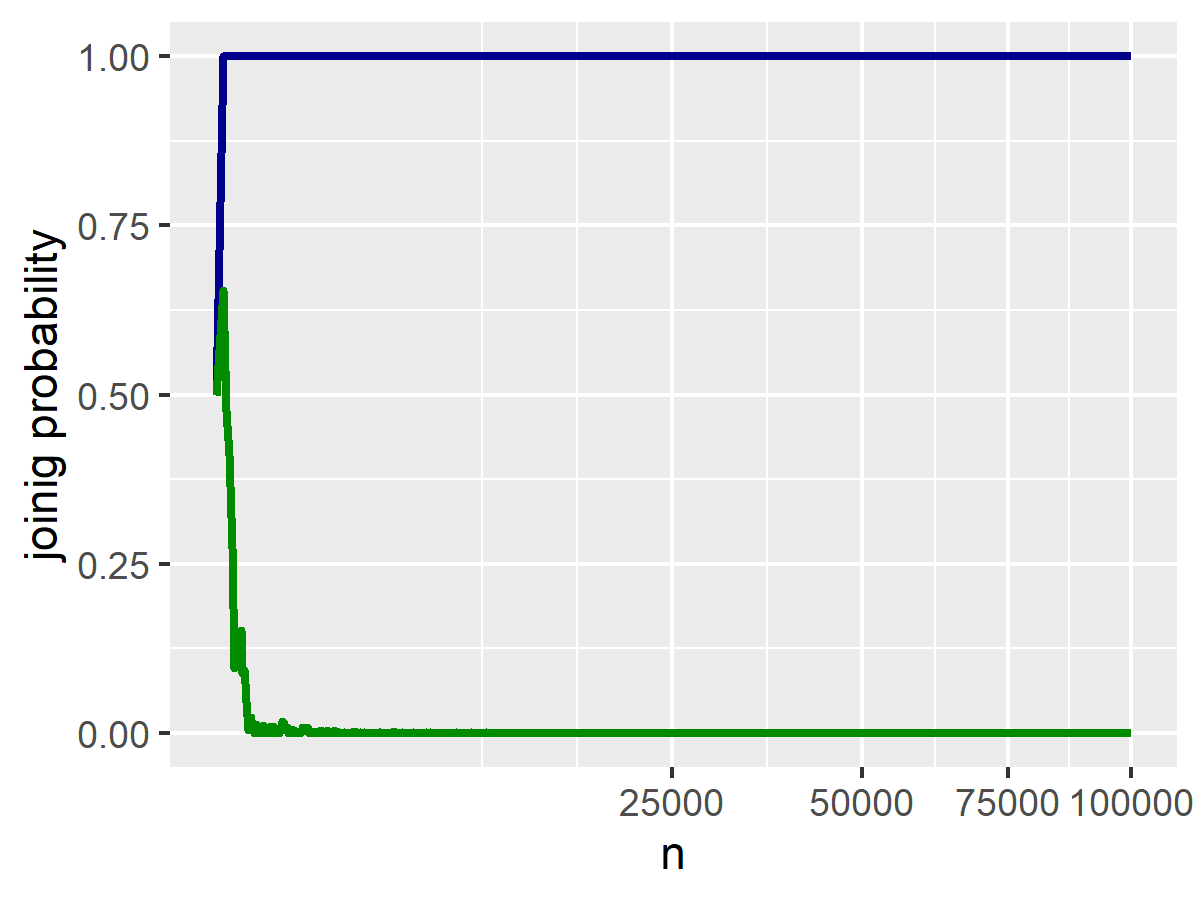}
         \caption{exponential service}
     \end{subfigure}
     \hfill
     \begin{subfigure}[b]{0.45\textwidth}
         \centering
         \includegraphics[scale=.7]{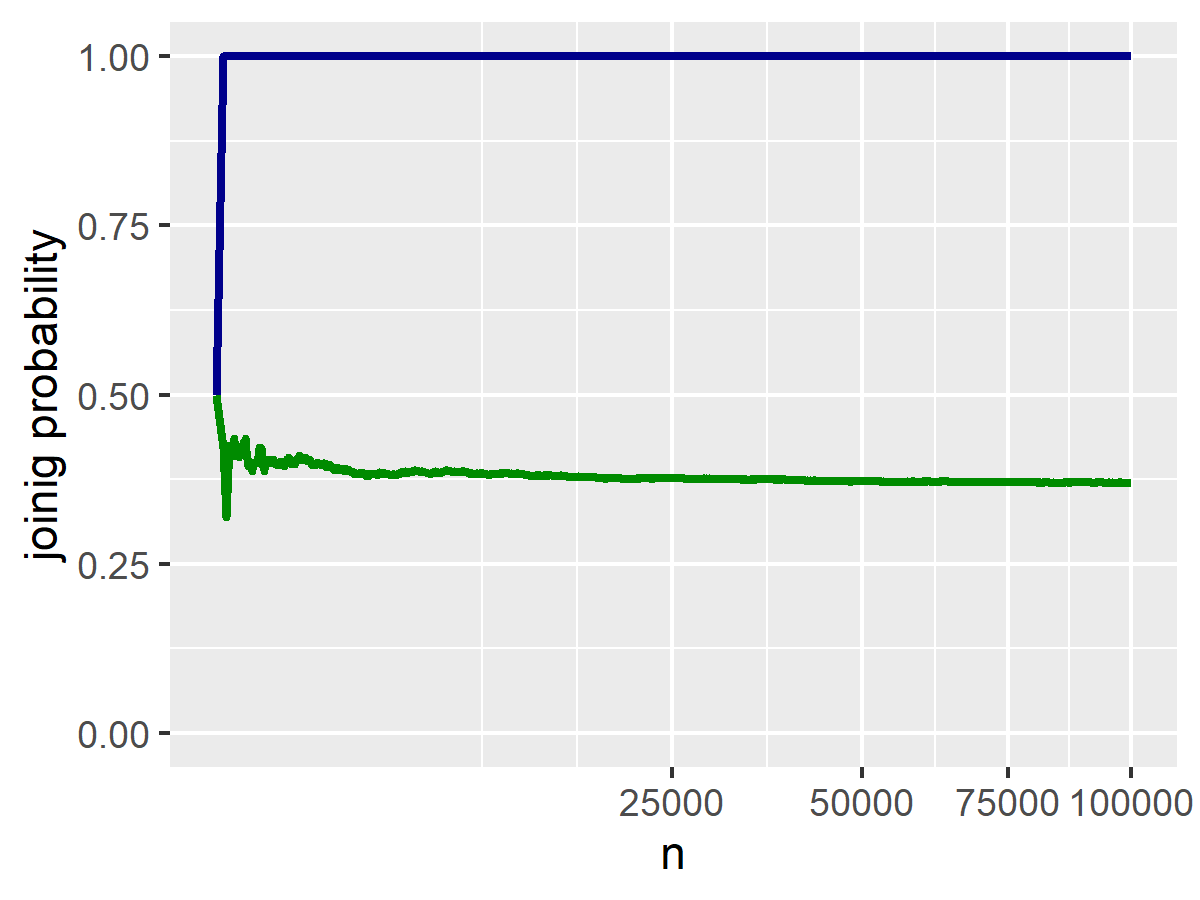}
         \caption{uniform service}
     \end{subfigure}
\caption{The convergence of the SA algorithm as $n\to\infty$. The joining probability  $p^{(n)}(s)$ is plotted vs. $n$ on a square-root scale, for $s=0,1$. In both panels,
the blue curve depicts ${p}^{(n)}(0)$ and the green curve depicts ${p}^{(n)}(1)$.
} \label{fig:obs-GG1}
\end{figure}

\begin{remark}
It is possible that some information signals are transient under some strategies but recurrent under different strategies. For instance, in the observable GI/G/1 queue, any strategy $\mathbf{p}(s)=(p(s), 1-p(s))$ with $\overline{s}\in \mathcal{I}(\mathbf{p})$ such that $p(\overline{s})=0$ (meaning customers balk when observing queue length $\overline{s}$), imply that the probability of observing a queue longer than $\overline{s}$ is zero, i.e., for all $s_0>\overline{s}$, $s_0 \notin \mathcal{I}(\mathbf{p})$. As a matter of fact, the equilibrium condition $\mathbf{p} \in \mathcal{BR}(\mathbf{p})$ does not impose any restriction on customer decisions in the (null) event they receive a zero-probability signal. 
Hassin and Haviv \cite{HH2002} introduce an adaptation of the concept of sub-game perfect Nash equilibria to queueing games, which is a refinement of the equilibrium concept, adding the requirement that customers play optimally also when arriving to transient states. In our setup, convergence to a sub-game perfect equilibrium cannot be guaranteed: Assuming the algorithm converges to an equilibrium $\mathbf{p}^e$ with $s_0 \notin \mathcal{I}(\mathbf{p}^e)$, the $s_0$-th (vector) component of the strategy will be updated, if at all, only finitely often; the tail of the sequence $\{\mathbf{p}^{(n)}(s_0)\}_{n\geq 0}$ is a constant vector (almost surely); and the limiting strategy $\mathbf{p}^e$ is possibly suboptimal at $s_0$.
\end{remark}

\section{Concluding remarks} \label{sec:conclusion}

This work introduces a robust stochastic-approximation algorithm that computes symmetric Nash equilibria in a general class of queueing games. The method involves simulating the system once and updating the strategy at regeneration times using the Robbins-Monro algorithm. As opposed to classical stochastic-optimization settings, a Nash equilibrium of a game with a discrete action space is not given by a first-order gradient condition. Therefore, an equivalent condition for a Nash equilibrium is formulated based on the root of a continuous function. This enables the construction of an iterative fixed-point method resembling those used in stochastic approximation. Results from renewal theory are used in order to construct an unbiased estimator of the total utility observed during a regeneration period. Verifiable conditions for almost-sure convergence of the algorithm are further provided. The method is shown to be useful for various interesting applications. 

We suspect that the suggested algorithm may be further used in order to find a socially optimal strategy, by considering the social cost of deviation instead of the individual optimality condition (see Haviv and Oz \cite{HO2021}). Loosely speaking, the concept of social cost of deviation basically allows one to reformulate the problem of identifying socially optimal strategies as a problem of finding equilibrium strategies in a game, and therefore deploy the SA algorithm to approximate these strategies. 

The framework developed here can be extended and improved in several ways, in particular, by introducing more accurate estimators for the utility function. A direct extension is to update the strategy after multiple regeneration cycles. Having several observations of regeneration cycles will reduce the variance of the estimator and will improve the accuracy of the iteration steps. Of course, this method entails an inherent trade-off because the algorithm can potentially ``waste time'' on simulating the process for strategies that are far off from the correct solution. Thus, a suitable dynamic choice of the simulation length at each iteration should be implemented.

The regeneration cycle approach comes along with both theoretical and practical advantages --  its implementation is straightforward and the estimator for $\mathbf{g}(\mathbf{p}^{(n)})$ is independent and unbiased. In this setting, the theory of stochastic approximations provides verifiable sufficient conditions for almost-sure convergence of the algorithm. An interesting future research direction is to explore the applicability of the SA framework to a more flexible iterative scheme that updates the strategy at every arrival and not just at regeneration times. This can be seen as a reinforcement-learning approach, although the resulting process is clearly not Markov. Implementation of such an algorithm should be just as easy, but theoretical analysis is much more involved. In particular, the estimation step is biased and is not independent of previous iterations. The theory on SA algorithms with correlated noise may be useful for constructing meaningful convergence conditions in this setting (see~\cite[Ch.~6]{KY2003}). This modification is appealing because it has the potential of coping with  strategies that drive the system out of the stability region. Proving that such a scheme of simple sequential interactions among agents eventually converges to an equilibrium will further strengthen the plausibility of equilibrium emergence in realistic systems and highlight the importance of studying equilibrium analysis in queues.

\section*{Acknowledgements}
The authors are grateful to Refael Hassin and Moshe Haviv for their advice and comments on the paper, as well as to Jim Dai and Binyamin Oz for several fruitful discussions. The authors would also like to thank the editors and reviewers for their detailed and helpful feedback. This research was supported in part by the Shenzhen Research Institute for Big Data International Postdoctoral Fellowship.

\section*{Authors}

\textbf{Ran I. Snitkovsky} is an Assistant Professor at the Coller School of Management, Tel Aviv University. He received his PhD in Operations Research in 2020 from the School of Mathematical Sciences, Tel Aviv University. He was a short-term visiting scholar at Tuck School of Business, and a postdoctoral fellow at Columbia Business School and Shenzhen Research Institute of Big Data, CUHK Shenzhen. His research revolves around the modeling and methodology of strategic, social, and behavioral interactions in congestion-prone systems, with a strong emphasis on economic and managerial insights.
\textbf{Contact details:} ran@tauex.tau.ac.il, Coller School of Management, Tel Aviv University, Tel Aviv 6997801, Israel.

\noindent\textbf{Liron Ravner} is a Senior Lecturer at the Department of Statisitcs, University of Haifa. He received his PhD in Statistics in the Hebrew Univsrsity of Jerusalem. His research interests focus on the intersection of Applied Probability, Statistics and Game Theory. 
\textbf{Contact details:} lravner@stat.haifa.ac.il, Department of Statistics, University of Haifa, Mount Carmel, Haifa 3498838, Israel.


\bibliographystyle{plain}

\begin{thebibliography}{111}

\bibitem{AY1974}
{\sc I. Adiri and U. Yechiali} (1974).
\newblock Optimal priority-purchasing and pricing decisions in nonmonopoly and monopoly queues.
\newblock {\em Operations Research}, 22(5): 1051--1066.

\bibitem{AA2003}
{\sc E.~Altman and Z.~Altman} (2003).
\newblock S-modular games and power control in wireless networks.
\newblock {\em IEEE Transactions on Automatic Control}, 48(5):839--842.

\bibitem{AS1998}
{\sc E.~Altman and N.~Shimkin} (1998).
\newblock Individual equilibrium and learning in processor sharing systems.
\newblock {\em Operations Research}, 46(6):776--784.

\bibitem{A1962}{\sc T.~W.~Anderson} (1962).
\newblock {\em An introduction to multivariate statistical analysis}, 3rd edition. Wiley, New York. 

\bibitem{A1996}
{\sc S.~Andradóttir} (1996).
\newblock Optimization of the transient and steady-state behavior of discrete event systems.
\newblock {\em Management Science}, 42(5):717--737.

\bibitem{A2003}{\sc S.~Asmussen} (2003).
\newblock {\em Applied Probability and Queues}, 2nd edition. Springer, New York. 

\bibitem{AG2007}{\sc S.~Asmussen and P. W.~Glynn} (2007).
\newblock {\em Stochastic simulation: algorithms and analysis}, Springer Science \& Business Media. 

\bibitem{BCL2022}
{\sc O. Baron, X. Chen and Y. Li} (2022).
\newblock Omnichannel services: The false premise and operational remedies.
\newblock {\em Management Science}.

\bibitem{BS1983}
{\sc C.E. Bell and S. Jr. Stidham} (1983).
\newblock Individual versus social optimization in the allocation of customers to alternative servers.
\newblock {\em Management Science}, 29(7): 831--839.

\bibitem{BH1999}
{\sc M. Bena{\"\i}m and M. W.~Hirsch} (1999).
\newblock Stochastic approximation algorithms with constant step size whose average is cooperative.
\newblock {\em Annals of Applied Probability}, 9(1): 216--241.

\bibitem{BOS2000}
{\sc I.~Ben-Shahar, A.~Orda and N.~Shimkin.} (2000).
\newblock Dynamic service sharing with heterogeneous preferences.
\newblock {\em Queueing Systems}, 35(1):83--103.

\bibitem{BN1992}
{\sc D.~J.~Bertsimas, D.~Nakazato.} (1992).
\newblock Transient and busy period analysis of the GI/G/1 queue: the method of stages.
\newblock {\em Queueing Systems}, 10(3): 153--184.

\bibitem{B2008}{\sc V.S.~Borkar} (2008).
\newblock {\em Stochastic approximation: a dynamical systems viewpoint}, Cambridge University Press.

\bibitem{BK2000}
{\sc R.~Buche and H. J.~Kushner} (2000).
\newblock Stochastic approximation and user adaptation in a competitive resource sharing system.
\newblock {\em IEEE Transactions on Automatic Control}, 45(5):844--853.

\bibitem{BK2002}
{\sc R.~Buche and H. J.~Kushner} (2002).
\newblock Rate of convergence for constrained stochastic approximation algorithms.
\newblock {\em SIAM journal on control and optimization}, 40(4):1011--1041.

\bibitem{BE2007}
{\sc A. Burnetas and A. Economou} (2007).
\newblock Equilibrium customer strategies in a single server Markovian queue with setup times.
\newblock {\em Queueing Systems}, 56(3): 213--228.

\bibitem{CFMNY2020}
{\sc F. Castro, P. Frazier, H. Ma, H. Nazerzadeh and C. Yan} (2020).
\newblock Matching Queues, Flexibility and Incentives.
\newblock {\em Available at SSRN}, https://ssrn.com/abstract=3627920


\bibitem{CHW2022}
{\sc M. Chen, M. Hu and J. Wang} (2022).
\newblock Food delivery service and restaurant: Friend or foe?
\newblock {Management Science}.

\bibitem{CLH2020}
{\sc X.~Chen, Y.~Liu and G.~Hong} (2020).
\newblock An online learning approach to dynamic pricing and capacity sizing in service systems.
\newblock {\em arXiv preprint}, arXiv:2009.02911.


\bibitem{CI1975}
{\sc M. A.~Crane and D.L.~Iglehart} (1975).
\newblock Simulating stable stochastic systems: III. Regenerative processes and discrete-event simulations.
\newblock {\em Operations Research}, 23(1):33--45.


\bibitem{CWY2020}
{\sc S. Cui, Z. Wang and L. Yang} (2020).
\newblock The economics of line-sitting.
\newblock {\em Management Science}, 66(1): 227--242.

\bibitem{DG2021}
{\sc J. G.~Dai and M.~Gluzman} (2021).
\newblock Queueing Network Controls via Deep Reinforcement Learning.
\newblock {\em Stochastic Systems}.

\bibitem{DJ1969}
{\sc D. J.~Daley and D.R.~Jacobs} (1969).
\newblock The total waiting time in a busy period of a stable single-server queue, II.
\newblock {\em Journal of Applied Probability}, 6(3):565--572.

 \bibitem{DMP2009}
 {\sc C.~Daskalakis, A.~Mehta and C.~Papadimitriou} (2009).
 \newblock TA note on approximate Nash equilibria.
 \newblock {\em Theoretical Computer Science}, 410(17):1581--1588.

\bibitem{DMT1980}
{\sc De Meyer, A., and J. L. Teugels} (1980).
\newblock On the asymptotic behaviour of the distributions of the busy period and service time in M/G/1.
\newblock {\em Journal of Applied Probability}, 17(3): 802--813.

 
 \bibitem{DTVW2008}
{\sc L.G. Debo, L.B. Toktay and L.N. Van Wassenhove} (2008).
\newblock Queuing for expert services.
\newblock {\em Management Science}, 54(8): 1497--1512.


\bibitem{EH1975}
{\sc N.~M.~Edelson and D.~K.~Hildebrand} (1975).
\newblock Congestion tolls for poisson queuing processes.
\newblock {\em Econometrica}, 43(1), 81--92.

\bibitem{F1990}
{\sc M. C.~Fu} (1990).
\newblock Convergence of a stochastic approximation algorithm for the GI/G/1 queue using infinitesimal perturbation analysis.
\newblock {\em Journal of Optimization Theory and Applications}, 65(1):149--160.

\bibitem{F1994}
{\sc M. C.~Fu} (1994).
\newblock Optimization via simulation: A review.
\newblock {\em Annals of Operations research}, 53(1):199--247.

\bibitem{G2002}
{\sc D.~Gamarnik} (2002).
\newblock On deciding stability of constrained homogeneous random walks and queueing systems.
\newblock {\em Mathematics of Operations Research}, 27(2):272--293.

\bibitem{GBL2020}
{\sc A. Gosh, A. Bassamboo and M. Larviere} (2020).
\newblock The Queue Behind the Curtain: Information Disclosure in Omnichannel Services.
\newblock {\em Available at SSRN}, https://ssrn.com/abstract=3730482

\bibitem{GH2011}
{\sc P. Guo and R. Hassin} (2001).
\newblock Strategic behavior and social optimization in Markovian vacation queues.
\newblock {\em Operations research}, 59(4): 986--997.


\bibitem{GZ2007}
{\sc P. Guo and P. Zipkin} (2007).
\newblock Analysis and comparison of queues with different levels of delay information.
\newblock {\em Management Science}, 53(6): 962--970.

\bibitem{H1996}
{\sc R. Hassin} (1996).
\newblock On the advantage of being the first server.
\newblock {\em Management Science}, 42(4): 618--623.

\bibitem{H2016}{\sc R.~Hassin.} (2016). {\it Rational Queueing}, CRC Press.

\bibitem{HH1997}
{\sc R. Hassin and M. Haviv} (1997).
\newblock Equilibrium threshold strategies: The case of queues with priorities.
\newblock {\em Operations Research}, 45(6): 966--973.

\bibitem{HH2002}
{\sc R.~Hassin and M.~Haviv.} (2002).
\newblock Nash equilibrium and subgame perfection in observable queues.
\newblock {\em Annals of Operations Research},  113(1):15--26.

\bibitem{HH2003}{\sc R.~Hassin and M.~Haviv.} (2003). {\it To Queue or Not to Queue: Equilibrium Behavior in Queueing Systems}, Springer.

\bibitem{HS2017}
{\sc R.~Hassin and R.I. ~Snitkovsky} (2017).
\newblock Strategic customer behavior in a queueing system with a loss subsystem.
\newblock {\em Queueing Systems},  86(3):361--387.

\bibitem{HS2021}
{\sc R.~Hassin and R.I.~Snitkovsky} (2021).
\newblock On the Supermarket Game with finitely many servers.
\newblock {\em Working paper}.

\bibitem{HRG2017}
{\sc R.~Hassin and R.~Roet-Green} (2017).
\newblock The impact of inspection cost on equilibrium, revenue, and social welfare in a single-server queue.
\newblock {\em Operations Research},  65(3):  804--820.

\bibitem{HRG2021}
{\sc R.~Hassin and R.~Roet-Green} (2021).
\newblock On Queue-Length Information when Customers Travel to a Queue.
\newblock {\em Manufacturing \& Service Operations Management},  23(4): 989--1004.

\bibitem{HO2021}
{\sc M.~Haviv and B.~Oz} (2021).
\newblock A busy period approach to some queueing games.
\newblock {\em Queueing Systems},  97(3):261--77.

\bibitem{HR2021}
{\sc M.~Haviv and L.~Ravner} (2021).
\newblock A survey of queueing systems with strategic timing of arrivals.
\newblock {\em Queueing Systems},  99(1), 163--198.

\bibitem{HWW2020}
{\sc M. Hu, J. Wang and H. Wen} (2020).
\newblock Share or solo? Individual and social choices in ride-hailing.
\newblock {\em Available at SSRN}, https://ssrn.com/abstract=3675050

\bibitem{JRG2021}
{\sc J. Jacob and R. Roet-Green} (2021).
\newblock Ride solo or pool: Designing price-service menus for a ride-sharing platform.
\newblock {\em European Journal of Operational Research}, 295(3): 1008--1024.

\bibitem{JMMZ2012}
{\sc K. Jagannathan, I. Menache, E. Modiano and G. Zussman} (2012).
\newblock Non-cooperative spectrum access—The dedicated vs. free spectrum choice.
\newblock {\em IEEE Journal on Selected Areas in Communications}, 30(11): 2251--2261.

\bibitem{K2011}
{\sc Y.~Kerner} (2011).
\newblock Equilibrium joining probabilities for an M/G/1 queue.
\newblock {\em Games and Economic Behavior},  71(2):521--526.

\bibitem{KW1952}
{\sc J.~Kiefer and J. ~Wolfowitz} (1952).
\newblock Stochastic estimation of the maximum of a regression function.
\newblock {\em The Annals of Mathematical Statistics},  23(3):462--466.

\bibitem{KY2003}{\sc H.~Kushner and G. G.~Yin} (2003). {\it Stochastic approximation and recursive algorithms and applications}. Springer Science \& Business Media.

\bibitem{LG1994}
{\sc P.~L'Ecuyer and P.~W.~Glynn} (1994).
\newblock Stochastic optimization by simulation: Convergence proofs for the GI/G/1 queue in steady-state.
\newblock {\em Management Science}, 40(11):1562--1578.

\bibitem{LGG1994}
{\sc P.~L'Ecuyer, N.~Giroux and P.~W.~Glynn} (1994).
\newblock  Stochastic optimization by simulation: Numerical experiments with the M/M/1 queue in steady-state.
\newblock {\em Management Science}, 40(10):1245--1261.

\bibitem{LS2007}
{\sc C.~Le Van, J.~Stachurski} (2007).
\newblock  Parametric continuity of stationary distributions.
\newblock {\em Economic Theory}, 33(2):333--348.

\bibitem{LXM2019}
{\sc B.~Liu, Q.~Xie and E.~Modiano} (2019).
\newblock  Reinforcement learning for optimal control of queueing systems.
\newblock {\em 2019 57th Annual Allerton Conference on Communication, Control, and Computing (Allerton)}, 663--670. IEEE

\bibitem{MPW2017}
{\sc M. ~Mandjes, B. ~Patch, and N. S. ~Walton.} (2019).
\newblock  Detecting Markov chain instability: a monte carlo approach.
\newblock {\em Stochastic Systems}, 7(2):289--314. 


\bibitem{MZ2003}
{\sc C. Maglaras and A. Zeevi} (2003).
\newblock Pricing and capacity sizing for systems with shared resources: Approximate solutions and scaling relations.
\newblock {\em Management Science}, 49(8): 1018--1038.

\bibitem{MCK2017}
{\sc A. Manou, P.G. Canbolat and F. Karaesmen} (2017).
\newblock Pricing in a transportation station with strategic customers.
\newblock {\em Production and Operations Management}, 26(9): 1632--1645.

\bibitem{MEK2014}
{\sc A. Manou, A. Economou and F. Karaesmen} (2014).
\newblock Strategic customers in a transportation station: when is it optimal to wait?.
\newblock {\em Operations Research}, 62(4): 910--925.

\bibitem{M1985}
{\sc H. Mendelson} (1985).
\newblock Pricing computer services: Queueing effects.
\newblock {\em Communications of the ACM}, 28(3):  312--321.

\bibitem{MSZ2013}{\sc M.~Maschler, E.~Solan and S.~Zamir.} (2013). {\it Game Theory}, Cambridge University Press.

\bibitem{N1969}
{\sc P.~Naor.} (1969).
\newblock The regulation of queue size by levying tolls.
\newblock {\em Econometrica}, 37(1):15--24.

\bibitem{NJLS2009}
{\sc A.~Nemirovski,  A.~Juditsky, G.~Lan and A. ~Shapiro} (2009).
\newblock Robust stochastic approximation approach to stochastic programming. 
\newblock {\em SIAM Journal on optimization},  19(4):1574--1609.  
37(1):15--24.


\bibitem{PK2004}
{\sc A. K. Parlaktürk and S. Kumar} (2004).
\newblock Self-interested routing in queueing networks.
\newblock {\em Management Science}, 50(7): 949--966.

\bibitem{RM1951}
{\sc H.~Robbins and S.~Monro} (1951).
\newblock A stochastic approximation method.
\newblock {\em The Annals of Mathematical Statistics}, Sep: 400--407.

\bibitem{RGY2020}
{\sc R. Roet-Green and Y. Yuan} (2020).
\newblock Information visibility in omnichannel queues.
\newblock {\em Available at SSRN},
https://ssrn.com/abstract=3485810

\bibitem{RR1985}
{\sc R.~Y.~Rubinstein and R.~Marcus} (1985).
\newblock Efficiency of multivariate control variates in Monte Carlo simulation.
\newblock {\em Operations Research}, 33(3): 661--677.


\bibitem{SMF2020}
{\sc Y.~Sakuma, H.~Masuyama and E.~Fukuda} (2020).
\newblock A discrete-time single-server Poisson queueing game: Equilibria simulated by an agent-based model.
\newblock {\em European Journal of Operational Research}, 283(1): 253--264.

\bibitem{SGD2021}
{\sc O.~Sebbouh,R. M.~Gower and A.~Defazio} (2021).
\newblock  Almost sure convergence rates for stochastic gradient descent and stochastic heavy ball.
\newblock {\em 2021 Conference on Learning Theory}, 3935--3971, PMLR.

\bibitem{SLY2020}
{\sc K. Sun, Y. Liu and L. Yang} (2020).
\newblock Order Ahead for Pickup: Promise or Peril?.
\newblock {\em Available at SSRN}, https://ssrn.com/abstract=3673617

\bibitem{SZ1988}
{\sc R.~Suri and M.~A.~Zazanis} (1988).
\newblock Perturbation analysis gives strongly consistent sensitivity estimates for the M/G/1 queue.
\newblock {\em Management Science}, 34(1): 39--64.

\bibitem{SL1989}
{\sc R.~Suri and Y.~T.~Leung} (1989).
\newblock Single run optimization of discrete event simulations—An empirical study using the M/M/l queue.
\newblock {\em IIE transactions}, 21(1): 35--49.

\bibitem{T2018}
{\sc T.A. Taylor} (2018).
\newblock On-demand service platforms.
\newblock {\em Manufacturing \& Service Operations Management}, 20(4): 704--720.    

\bibitem{T1985a}
{\sc H.~Thorisson.} (1985).
\newblock The queue GI/G/1: Finite moments of the cycle variables and uniform rates of convergence.
\newblock {\em Stochastic processes and their applications}, 19(1):85--99.


\bibitem{WCW2019}
{\sc J. Wang, S. Cui, and Z. Wang} (2019).
\newblock Equilibrium strategies in M/M/1 priority queues with balking.
\newblock {\em Production and Operations Management}, 28(1): 43--62.

\bibitem{W1974}
{\sc W.~Whitt} (1974).
\newblock The continuity of queues.
\newblock {\em Advances in Applied Probability}, 6:175--183.

\bibitem{W1989}
{\sc W.~Whitt} (1989).
\newblock Planning queueing simulations.
\newblock {\em Management Science}, 35(11):1341--1366.

\bibitem{XH2013}
{\sc J.~Xu and B.~Hajek} (2013).
\newblock The supermarket game.
\newblock {\em Stochastic Systems},  3(2):405--441.

\bibitem{YN1992}
{\sc W.~Yang and B.~L.~Nelson} (1992).
\newblock Multivariate batch means and control variates.
\newblock {\em Management Science}, 38(10): 1415--1431.

\bibitem{Y1995}
{\sc D. D.~Yao} (1995).
\newblock S-modular games, with queueing applications.
\newblock {\em Queueing Systems}, 21(3):449--475.



\end{thebibliography}
{\small }
\newpage


\begin{appendices}

\section{Verification of approximate equilibrium for Section \ref{sec:motivation}}\label{sec:appVER}
For validation purpose, we estimate the mean virtual workloads for the two queues by simulating $2\times 10^6$ arrivals to each of the two GI/G/1 queues separately (assuming they are initially empty). The choice of simulation length follows the framework laid out in \cite{W1989}. Note that traffic intensities are relatively low in these two cases, and (recalling that $\mu_1=\mu_2=1$) are equal to $\lambda p^{(N)}_1 \approx 0.477$ in Server 1, and $\lambda p^{(N)}_2 \approx 0.3$ in Server 2. In the two cases below, inter-arrival times are distributed as $S=\sum_{i=1}^Z U_i$ where $U_i \sim \rm{Gamma}(0.1, 11)$, $Z$ is geometrically distributed with a different parameter at each case, and services are distributed as described:
\begin{enumerate}
\item $Z\sim {\rm Geom}(p^{(N)}_1)$ and services are distributed according to $F_1$ (which is $\rm{Beta}(10, 10)+0.5$);
\item $Z\sim {\rm Geom}(p^{(N)}_2)$ and services are distributed according to $F_2$ (which is  $\rm{Bernoulli}(0.1) \cdot 10$). 
\end{enumerate}
Under the algorithm's output strategy $\mathbf{p}^{(N)}$, we obtain (consistent) estimators for $\E[X^{[1]}]$ and $\E[X^{[2]}]$, in Cases 1 and 2, respectively, by averaging the observed workloads at arrival instants.
To construct asymptotic confidence intervals for the mean workloads we follow the CLT-based framework of \cite[Ch.~III]{AG2007}: We set the significance level $\alpha=0.005$. Under Case 1 we obtain an average virtual workload equal to 4.006 with an asymptotic $1-\alpha$ confidence interval $[3.996, 4.017]$. Under Case 2 we get a sample average equal to 4.005 with an asymptotic $1-\alpha$ confidence interval $[3.989, 4.021]$. This suggests that with more than $(1-\alpha)^2(\approx 0.99)$ certainty, our algorithm's output $\mathbf{p}^{(N)}$ is such that $(\E[v_1], \E[v_2]) \in \mathcal{I}_1 \times \mathcal{I}_2$, where $ \mathcal{I}_1 = [-0.017, 0.004]$ and $\mathcal{I}_2= [-0.021, 0.011]$. Hence, with high certainty (>99\%) our approximated solution $\mathbf{p}^{(N)}$ satisfies the criterion for an $\epsilon$-approximate Nash equilibrium (\cite{DMP2009}) for $\epsilon \leq \max_{(x,y) \in \mathcal{I}_1 \times \mathcal{I}_2} |x-y|= 0.028$.

\section{Proofs}\label{sec:appA}

\subsection{Proof of Lemma \ref{lem:equilibrium-equivalence}}\label{lem:equilibrium-equivalence-proof}
\begin{proof}
We show first that every symmetric Nash equilibrium $\mathbf{p}$ satisfies $\mathbf{p}=\mathbf{f}(\mathbf{p})$. Assume $\mathbf{p} \in \argmax_{\mathbf{q}\in\Delta} \mathbf{u(p)'q}$. Then any $\mathbf{q} \in \Delta$ satisfies $\mathbf{u(p)'q} \leq \mathbf{u(p)'p}$, or equivalently, $0 \leq \mathbf{u(p)'(p-q)}$. Assuming $\mathbf{q}\neq \mathbf{p}$, we have that 
\begin{align*}
\Vert \mathbf{p} + \mathbf{u}(\mathbf{p})-\mathbf{q} \Vert^2 &= \Vert \mathbf{p}-\mathbf{q} \Vert^2 + 2\mathbf{u}(\mathbf{p})'(\mathbf{p} - \mathbf{q}) + \Vert\mathbf{u}(\mathbf{p})\Vert^2 \\ 
&> \Vert\mathbf{u}(\mathbf{p})\Vert^2 = \Vert \mathbf{p} + \mathbf{u}(\mathbf{p})-\mathbf{p} \Vert^2,
\end{align*}
meaning that $\mathbf{p}= \pi_\Delta \big( \mathbf{p} + \mathbf{u}(\mathbf{p}) \big) = \mathbf{f}(\mathbf{p})$. 

We proceed to showing that if $\mathbf{p}$ is \emph{not} a symmetric equilibrium strategy then $\mathbf{p} \neq \mathbf{f}(\mathbf{p})$. Assume now that $\mathbf{p} \notin \argmax_{\mathbf{q}\in\Delta} \mathbf{u(p)'q}$ so that there exists some strategy $\mathbf{q}\in\Delta$, $\mathbf{q}\neq\mathbf{p}$, such that  $\mathbf{u(p)'(q-p)}>0$. 
Hence, we can choose $\theta \in (0,1]$ such that 
\[ 
\theta < \frac{2\mathbf{u(p)'(q-p)}}{\Vert\mathbf{q-p}\Vert^2}.
\]
Define $\tilde{\mathbf{q}} = (1-\theta)\mathbf{p}+\theta\mathbf{q}$, so that $ \mathbf{p} -\tilde{\mathbf{q}} = \theta(\mathbf{p-q})$. Note that $\Delta$ is a convex set, thus $\tilde{\mathbf{q}}\in\Delta$. Then,
\begin{align*}
\Vert \mathbf{p}+\mathbf{u(p)}-\tilde{\mathbf{q}}\Vert^2 &= \Vert \mathbf{p}-\tilde{\mathbf{q}}\Vert^2 + 2\mathbf{u(p)}'(\mathbf{p}-\tilde{\mathbf{q}}) +\Vert \mathbf{u(p)}\Vert^2 \\
&= \theta \left( \theta \Vert\mathbf{q}-\mathbf{p}\Vert^2
-2\mathbf{u(p)}'(\mathbf{q}-\mathbf{p})\right)+\Vert \mathbf{u(p)}\Vert^2 \\
& < \Vert \mathbf{u(p)}\Vert^2 = \Vert \mathbf{p}+\mathbf{u(p)}-\mathbf{p}\Vert^2,
\end{align*}
therefore $\mathbf{p}\neq \pi_\Delta \big( \mathbf{p} + \mathbf{u}(\mathbf{p}) \big)$,
meaning that $\mathbf{p} \neq \mathbf{f}(\mathbf{p})$.
\end{proof}

\subsection{Proof of Theorem \ref{thm:convergence} and auxiliary results}\label{sec:app_convergence-proof}
The proof of Theorem \ref{thm:convergence} essentially relies on casting the algorithm in \eqref{eq:SA_iteration} into a specific form, in which the conditions of
Theorems~2.4 in~\cite[Ch.~5.2]{KY2003} can be verified. Roughly speaking, this theorem relates the asymptotic behavior of the iterate $\mathbf{p}^{(n)}$ in \eqref{eq:SA_iteration} to solution paths of an appropriate ODE (Equation \eqref{eq:ODE_def} below), in which the strategy  ${\mathbf{p}}(\cdot)$ is understood as a continuous function of time. The proof will make use of the following result:
\begin{lemma}
Let $\mathbf{p}\in\Delta$ and $\mathbf{x}\in\mathbb{R}^k$, then 
\[ 
\pi_{\Delta}(\mathbf{p+x}) = \pi_{\Delta}(\mathbf{p+Hx}),
\]
where $\mathbf{H}$ is the orthogonal projection matrix onto $\mathcal{H}=\{\mathbf{x}\in \mathbb{R}^k \mid \mathbf{e}'\mathbf{x}=0\}$. \label{lem:aux-lemma-thm5}
\end{lemma}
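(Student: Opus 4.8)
The plan is to show that the projection onto $\Delta$ depends on its argument only through that argument's image under $\mathbf{H}$; the claimed identity then follows immediately. First I would record the explicit form of the projection matrix, $\mathbf{H}=\mathbf{I}-\tfrac{1}{k}\mathbf{e}\mathbf{e}'$, which is symmetric and idempotent ($\mathbf{H}^2=\mathbf{H}$), and whose complement $\mathbf{I}-\mathbf{H}=\tfrac1k\mathbf{e}\mathbf{e}'$ is the orthogonal projection onto $\mathrm{span}(\mathbf{e})$. In particular, for any $\mathbf{x}\in\mathbb{R}^k$ the difference $(\mathbf{I}-\mathbf{H})\mathbf{x}=\tfrac1k(\mathbf{e}'\mathbf{x})\mathbf{e}$ is a scalar multiple of $\mathbf{e}$, so $\mathbf{p}+\mathbf{x}$ and $\mathbf{p}+\mathbf{Hx}$ differ only by a vector parallel to $\mathbf{e}$.

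The key step is to show that for any $\mathbf{z}\in\mathbb{R}^k$ the nearest point $\pi_\Delta(\mathbf{z})$ is unaffected by translating $\mathbf{z}$ along $\mathbf{e}$. To this end I would fix $\mathbf{z}$ and, for a candidate $\mathbf{q}\in\Delta$, use the orthogonal decomposition $\mathbb{R}^k=\mathcal{H}\oplus\mathrm{span}(\mathbf{e})$. Since every $\mathbf{q}\in\Delta$ satisfies $\mathbf{e}'\mathbf{q}=1$, its component along $\mathbf{e}$ is the fixed vector $\tfrac1k\mathbf{e}$, independent of $\mathbf{q}$; writing $\mathbf{q}=\mathbf{Hq}+\tfrac1k\mathbf{e}$ and $\mathbf{z}=\mathbf{Hz}+\tfrac1k(\mathbf{e}'\mathbf{z})\mathbf{e}$, the Pythagorean theorem gives
\[
\|\mathbf{z}-\mathbf{q}\|^2=\|\mathbf{Hz}-\mathbf{Hq}\|^2+\bigl\|\tfrac1k(\mathbf{e}'\mathbf{z})\mathbf{e}-\tfrac1k\mathbf{e}\bigr\|^2,
\]
because the two summands lie in the orthogonal subspaces $\mathcal{H}$ and $\mathrm{span}(\mathbf{e})$. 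The second term does not depend on $\mathbf{q}$, so minimizing $\|\mathbf{z}-\mathbf{q}\|$ over $\mathbf{q}\in\Delta$ is equivalent to minimizing $\|\mathbf{Hz}-\mathbf{Hq}\|$; hence $\pi_\Delta(\mathbf{z})$ is determined by $\mathbf{z}$ solely through $\mathbf{Hz}$.

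Finally I would combine these observations. By idempotency of $\mathbf{H}$, $\mathbf{H}(\mathbf{p}+\mathbf{Hx})=\mathbf{Hp}+\mathbf{H}^2\mathbf{x}=\mathbf{Hp}+\mathbf{Hx}=\mathbf{H}(\mathbf{p}+\mathbf{x})$, so the two arguments $\mathbf{p}+\mathbf{x}$ and $\mathbf{p}+\mathbf{Hx}$ have the same image under $\mathbf{H}$. By the key step their projections onto $\Delta$ coincide, which yields $\pi_\Delta(\mathbf{p}+\mathbf{x})=\pi_\Delta(\mathbf{p}+\mathbf{Hx})$, as required. The only real subtlety, and the one point I would be careful about, is the Pythagorean splitting: it relies on the candidates $\mathbf{q}$ ranging over a slice of the affine hyperplane $\{\mathbf{q}\mid\mathbf{e}'\mathbf{q}=1\}$ with a fixed normal component, which is precisely what makes the $\mathrm{span}(\mathbf{e})$-term constant in $\mathbf{q}$. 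Everything else is routine linear algebra.
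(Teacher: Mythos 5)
Your proof is correct and rests on exactly the same idea as the paper's: the Pythagorean splitting of $\Vert \mathbf{z}-\mathbf{q}\Vert^2$ along $\mathcal{H}\oplus\mathrm{span}(\mathbf{e})$, using the fact that differences of points of $\Delta$ lie in $\mathcal{H}$ so the $\mathrm{span}(\mathbf{e})$-component of the objective is constant in $\mathbf{q}$. The only difference is presentational — you argue directly that the minimization objective depends on $\mathbf{z}$ only through $\mathbf{Hz}$, whereas the paper runs the same identity through a proof by contradiction — so the two proofs are essentially identical.
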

\begin{proof}
Denote $\hat{\mathbf{p}}=\pi_{\Delta}(\mathbf{p+x})$, and suppose, by way of contradiction, that there exists some $\mathbf{q}\in\Delta$ that is closer than $\hat{\mathbf{p}}$ to $\mathbf{p+Hx}$, so that \[\Vert\mathbf{p+Hx-q}\Vert^2 < \Vert\mathbf{p+Hx}-\hat{\mathbf{p}}\Vert^2. \]
Note, since $\mathbf{p},\mathbf{q},\hat{\mathbf{p}}\in\Delta$, that $\mathbf{p-q}\in\mathcal{H}$ and $\mathbf{p}-\hat{\mathbf{p}}\in\mathcal{H}$, so that both $\mathbf{p-q}$ and $\mathbf{p}-\hat{\mathbf{p}}$
are orthogonal to $\mathbf{x-Hx}$. Using the (generalized) Pythagorean theorem, together with the above inequality we therefore have
\[\Vert\mathbf{p+x-q}\Vert^2=\Vert\mathbf{x-Hx}\Vert^2 + \Vert\mathbf{p+Hx-q}\Vert^2 < \Vert\mathbf{x-Hx}\Vert^2 + \Vert\mathbf{p+Hx}-\hat{\mathbf{p}}\Vert^2=\Vert\mathbf{p+x}-\hat{\mathbf{p}}\Vert^2, \]
which is in contradiction to the definition of $\hat{\mathbf{p}}$.
\end{proof}

To define the corresponding ODE let the set-valued map  $\mathcal{C}:\Delta\to 2^{\Delta}$
be defined as follows:
For an interior point $\mathbf{p}\in\Delta^\mathrm{o}$, let  $\mathcal{C}(\mathbf{p})=\{\mathbf{0}\}$; 
for a boundary point $\mathbf{p}\in \Delta \setminus \Delta^\mathrm{o}$, let $\mathcal{C}(\mathbf{p})$ be the infinite convex cone generated by the outer normals of the faces of $\Delta$ that contain $\mathbf{p}$.
Adopted to our framework, the projected ODE takes the form:
\begin{equation}\label{eq:ODE_def}
\mathbf{\dot{p}=Hu(p)+z}, \quad \text{and} \quad -\mathbf{z}(t)\in\mathcal{C}(\mathbf{p}(t)),\: \forall t\geq 0,
\end{equation}
where $\dot{\mathbf{p}}(\cdot)$ is the vector of time derivatives of $\mathbf{p}(\cdot)$; $\dot{\mathbf{p}}=(dp_1/dt, \dots, dp_k/dt)$. The value of $\mathbf{z}(\cdot)$ at each point in time is interpreted as the `minimum force' needed to prevent the path ${\mathbf{p}}(\cdot)$ from leaving $\Delta$. A \textit{stationary point} of \eqref{eq:ODE_def} is a point $\mathbf{p}^e \in \Delta$ such that $\mathbf{Hu}(\mathbf{p}^e)+\mathbf{z}=\mathbf{0}$ for some $\mathbf{z}$ such that $-\mathbf{z}\in \mathcal{C}(\mathbf{p}^e)$, meaning that the minimum force needed to push $\mathbf{p}^e+\mathbf{Hu}(\mathbf{p}^e)$ into $\Delta$ is given by $-\mathbf{Hu}(\mathbf{p}^e)$. Hence, if $\mathbf{p}^e$ is stationary,
\[ 
 \pi_{\Delta}(\mathbf{p}^e+\mathbf{Hu}(\mathbf{p}^e)) = \mathbf{p}+\mathbf{Hu}(\mathbf{p}^e)-\mathbf{Hu}(\mathbf{p}^e)= \mathbf{p}^e,
\]
and in light of Lemma \ref{lem:aux-lemma-thm5}, $\mathbf{p}^e=\mathbf{f}(\mathbf{p}^e)$, implying by Lemma \ref{lem:equilibrium-equivalence} that $\mathbf{p}^e$ is an equilibrium.

Define the \textit{limit set} of the ODE in \eqref{eq:ODE_def} as
\[
\mathcal{L}=\bigcup_{\mathbf{p}^0\in \Delta} \: \bigcap_{t\geq 0} \left\{ \mathbf{p}(s) \mid s \geq t,\: \mathbf{p}(0) = \mathbf{p}^0 \right\},
\]
that is, we consider solution paths of \eqref{eq:ODE_def} starting at an initial point $\mathbf{p}^0$, for each such path we pick all points visited infinitely many times, and then define $\mathcal{L}$ as the union of all such points for all possible initial values $\mathbf{p}^0\in \Delta$. Theorems~2.1 and 2.4 in~\cite[Ch.~5.2]{KY2003} provide sufficient conditions under which the stochastic iterate $\mathbf{p}^{(n)}$ converges (almost surely) to some subset of $\mathcal{L}$ as $n\to\infty$.

Clearly, if $\mathbf{p}^e$ is stationary, then a path starting at $\mathbf{p}^e$ will forever remain there, thus $\mathbf{p}^e\in\mathcal{L}$. However, in general $\mathcal{L}$ may include points that are not stationary, specifically when there are solutions of \eqref{eq:ODE_def} that do no converge to a unique point (e.g., circles). It is therefore of interest to introduce conditions under which such circular paths do not exist, and Assumption \ref{assum:finite_u} is one such condition. This is formulated in the lemma below:

\begin{lemma}\label{lem:limit-points-are-equilibria}
Let $\mathcal{S}^*$ be the set of stationary points of \eqref{eq:ODE_def}. If Assumption \ref{assum:finite_u} holds, then $\mathcal{S}^*=\mathcal{L}$, hence, every point in $\mathcal{L}$ is an equilibrium.
\end{lemma}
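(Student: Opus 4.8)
The plan is to use the potential function $u^*$ furnished by Assumption \ref{assum:finite_u} as a strict Lyapunov function for the projected ODE \eqref{eq:ODE_def}, and then exploit the ``visited infinitely often'' structure of $\mathcal{L}$ to force every point of $\mathcal{L}$ to be a rest point. The inclusion $\mathcal{S}^*\subseteq\mathcal{L}$ is immediate and was already noted after \eqref{eq:ODE_def}: a path started at a stationary point remains there for all time, hence that point is visited at arbitrarily large times and lies in $\mathcal{L}$. The entire content is therefore the reverse inclusion $\mathcal{L}\subseteq\mathcal{S}^*$.

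To establish it, I would fix a solution path $\mathbf{p}(\cdot)$ of \eqref{eq:ODE_def} and set $V(t)=u^*(\mathbf{p}(t))$. Since $\mathbf{Hu}(\cdot)$ is continuous on the compact set $\Delta$ the drift is bounded, the path is Lipschitz, and since $\nabla u^*=-\mathbf{Hu}$ is continuous, $u^*$ is $C^1$; thus $V$ is absolutely continuous and the chain rule applies for a.e.\ $t$, giving
\[
\dot V(t)=\nabla u^*(\mathbf{p})'\dot{\mathbf{p}}=-\mathbf{Hu}(\mathbf{p})'\dot{\mathbf{p}}=-(\dot{\mathbf{p}}-\mathbf{z})'\dot{\mathbf{p}}=-\Vert\dot{\mathbf{p}}\Vert^2+\mathbf{z}'\dot{\mathbf{p}},
\]
where I have used $\nabla u^*=-\mathbf{Hu}$ and the ODE in the form $\mathbf{Hu}(\mathbf{p})=\dot{\mathbf{p}}-\mathbf{z}$. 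The hard part will be the complementarity identity $\mathbf{z}(t)'\dot{\mathbf{p}}(t)=0$, which is the geometric heart of the argument: the ``minimum force'' characterization of $\mathbf{z}$ identifies $\dot{\mathbf{p}}$ and $-\mathbf{z}$ as the Moreau projections of $\mathbf{Hu}(\mathbf{p})$ onto, respectively, the tangent cone $T_\Delta(\mathbf{p})$ and the normal cone $\mathcal{C}(\mathbf{p})$, which are mutually polar for the convex set $\Delta$; orthogonality of the two Moreau components then yields $\mathbf{z}'\dot{\mathbf{p}}=0$. (For interior points $\mathcal{C}(\mathbf{p})=\{\mathbf{0}\}$, so $\mathbf{z}=\mathbf{0}$ and the identity is trivial.) I expect pinning down the precise cone duality and the a.e.\ validity of the chain rule along the reflected path to be the most delicate points; the rest is bookkeeping. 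With the identity in hand, $\dot V(t)=-\Vert\dot{\mathbf{p}}(t)\Vert^2\le 0$, so $V$ is non-increasing, and strictly decreasing except where $\dot{\mathbf{p}}=\mathbf{0}$, i.e.\ except at stationary points.

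Finally I would conclude using the definition of $\mathcal{L}$. Let $\mathbf{q}\in\mathcal{L}$; then there is a solution path $\mathbf{p}(\cdot)$ and times $t_1<t_2<\cdots\to\infty$ with $\mathbf{p}(t_n)=\mathbf{q}$, so $V(t_n)=u^*(\mathbf{q})$ for all $n$. A non-increasing function that revisits the same value along a sequence of times tending to infinity must be constant on $[t_1,\infty)$, so $V\equiv u^*(\mathbf{q})$ there, whence $\dot V=0$ a.e.\ on $[t_1,\infty)$ and therefore $\dot{\mathbf{p}}=\mathbf{0}$ a.e.; being absolutely continuous with a.e.-zero derivative, $\mathbf{p}(\cdot)$ is constant and equal to $\mathbf{q}$ on $[t_1,\infty)$. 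In particular $\dot{\mathbf{p}}=\mathbf{0}$ at $\mathbf{q}$, i.e.\ $\mathbf{Hu}(\mathbf{q})+\mathbf{z}=\mathbf{0}$ with $-\mathbf{z}\in\mathcal{C}(\mathbf{q})$, which is exactly the definition of a stationary point, so $\mathbf{q}\in\mathcal{S}^*$. This proves $\mathcal{L}\subseteq\mathcal{S}^*$, hence $\mathcal{S}^*=\mathcal{L}$; and by the discussion following \eqref{eq:ODE_def}, every stationary point satisfies $\mathbf{p}^e=\mathbf{f}(\mathbf{p}^e)$ and is therefore an equilibrium by Lemma \ref{lem:equilibrium-equivalence}.
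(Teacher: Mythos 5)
Your proof is correct, and it shares the paper's central idea of using the potential $u^*$ from Assumption \ref{assum:finite_u} as a Lyapunov function along solutions of \eqref{eq:ODE_def}; the two arguments diverge, however, in how the decrease of $u^*$ is established and in how the conclusion is extracted. You derive $\dot V=-\Vert\dot{\mathbf{p}}\Vert^2\le 0$ from the complementarity identity $\mathbf{z}'\dot{\mathbf{p}}=0$, justified via the Moreau decomposition of $\mathbf{Hu}(\mathbf{p})$ into its tangent-cone and normal-cone components; this is a standard fact for projected dynamics on a convex polyhedron, but it is the one step that requires care to pin down from the ``minimum force'' description of $\mathbf{z}$, as you acknowledge. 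The paper instead avoids the cone machinery entirely: it only uses that $\Vert\mathbf{z}(t)\Vert\le\Vert\mathbf{Hu}(\mathbf{p}(t))\Vert$ (strictly at non-stationary points), which follows directly from the projection onto $\Delta$ being distance-minimizing, and then Cauchy--Schwarz gives $\frac{d}{dt}u^*(\mathbf{p}(t))=-\big(\mathbf{Hu}(\mathbf{p}(t))\big)'\big(\mathbf{Hu}(\mathbf{p}(t))+\mathbf{z}(t)\big)<0$ off the stationary set. At the endgame, you exploit the paper's closure-free definition of $\mathcal{L}$ (points literally revisited at arbitrarily large times), so that a non-increasing $V$ revisiting a value must be constant and the path must freeze at $\mathbf{q}$; the paper instead integrates $w=-\dot V\ge 0$, concludes $\int_0^\infty w(\mathbf{p}(s))\,ds<\infty$ and hence $w(\mathbf{p}(t))\to 0$, so the trajectory converges to the (closed) set $\{w=0\}$ of stationary points. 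Your endgame is more elementary and self-contained given the stated definition of $\mathcal{L}$; the paper's version is more robust in that it would also cover the conventional $\omega$-limit-set definition with closures. Both correctly finish by invoking the identification of stationary points with fixed points of $\mathbf{f}$ and Lemma \ref{lem:equilibrium-equivalence}.
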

\begin{proof} As discussed, $\mathcal{S}^*\subseteq \mathcal{L}$ holds trivially, therefore it suffices to show that Assumption \ref{assum:finite_u} implies $\mathcal{L}\subseteq \mathcal{S}^*$.
Let $\nabla_{\mathbf{p}} u^*=-\mathbf{Hu}$, and let $\mathbf{p}(\cdot)$ be a (continuous) trajectory of \eqref{eq:ODE_def} starting at some $\mathbf{p}^0\in\Delta$. Our goal is to show that $\mathbf{p}(\cdot)$ converges to some set of stationary points, using $u^*(\cdot)$ as a Lyapunov function. Note that $u^*(\cdot)$ is continuous and therefore attains a finite minimum on $\Delta$, hence we can assume w.l.o.g that $u^*(\cdot)$ is positive on $\Delta$ (otherwise it can be increased by a constant without affecting the assumptions). For any $t\geq 0$, the term $\mathbf{p}(t)+\mathbf{Hu(p}(t))+ \mathbf{z}(t)$ is the projection of $\mathbf{p}(t)+\mathbf{Hu(p}(t))$ onto $\Delta$, hence
\begin{align*}
\Vert \mathbf{z}(t) \Vert
 &= \Vert \mathbf{p}(t)+\mathbf{Hu(p}(t)) - (\mathbf{p}(t)+\mathbf{Hu(p}(t))+ \mathbf{z}(t)) \Vert \\
 &\leq \Vert \mathbf{p}(t)+\mathbf{Hu(p}(t)) - \mathbf{p}(t) \Vert = \Vert \mathbf{Hu(p}(t)) \Vert. 
\end{align*}
Note that the inequality is strict in the case that $\mathbf{p}(t)$ is non-stationary. This, together with the Cauchy–Schwarz inequality implies, for any non-stationary $\mathbf{p}(t)$,
\[ 
-\Vert \mathbf{Hu(p}(t)) \Vert^2 <   -\Vert \mathbf{Hu(p}(t)) \Vert \cdot \Vert \mathbf{z}(t) \Vert \leq \big(\mathbf{Hu(p}(t))\big) ' \mathbf{z}(t).
\]
Denote $w(\mathbf{p}(t))=-(d/dt) u^*(\mathbf{p}(t))$. Using the gradient chain rule we therefore have, for non-stationary $\mathbf{p}(t)$,
\[
-w(\mathbf{p}(t)) =\frac{d}{dt} u^*(\mathbf{p}(t))= {\nabla_{\mathbf{p}}u^*(\mathbf{p}(t))}'\dot{\mathbf{p}}(t)=-\big(\mathbf{Hu(p}(t))\big)'(\mathbf{Hu(p}(t))+ \mathbf{z}(t)) < 0.
\]
Following the continuity of $\mathbf{Hu}(\cdot)$, $w(\cdot)$ is continuous on $\Delta$, and it can be seen from the above that for any $\mathbf{p}\in\Delta$, $w(\mathbf{p})=0$ if and only if $\mathbf{p}$ is stationary.
Furthermore, for all $t\geq 0$,
\[ 
0 \leq \int_{0}^t w(\mathbf{p}(s)) ds =
 u^*(\mathbf{p}^0) - u^*(\mathbf{p}(t)) \leq  u^*(\mathbf{p}^0), 
\]
where the last inequality is due to $u^*$ being positive.
It follows that $0\leq \int_{0}^\infty w(\mathbf{p}(s))ds < \infty$, thus, as $t\to\infty$, $w(\mathbf{p}(t))\to 0$, and by continuity of $w(\cdot)$ and $\mathbf{p}(\cdot)$,
\[ \mathbf{p}(t) \to \{\mathbf{p}^e \mid w(\mathbf{p}^e)=0 \},
\]
i.e., $\mathbf{p}(\cdot)$ converges to a set of stationary points, as desired.
\end{proof}

\begin{remark}\label{rem:conservative-u}
There are cases in which Assumptions \ref{assum:finite_u} can be easily verified. For example, if $k=2$, then each strategy $\mathbf{p}=(p, 1-p)\in\Delta$ is uniquely characterized by a single value $p\in[0,1]$ which parametrizes the whole simplex, and the matrix $\mathbf{H}$ is given by
\[
\mathbf{H} = \frac{1}{2}\cdot\begin{pmatrix}
1 & -1 \\
-1 & 1 
\end{pmatrix}.
\] 
Then for any $\mathbf{p}=(p, 1-p)\in\Delta$, denoting $\mathbf{s}=(s, 1-s)$ and $\tilde{\mathbf{e}}=\mathbf{e}_1-\mathbf{e}_2=(1, -1)$, we can simply define $u^*(\cdot)$ by
\[
u^*(\mathbf{p})=\int_{0}^{p} \mathbf{-\left(Hu(s)\right)}'\tilde{\mathbf{e}} \cdot ds = - \int_{0}^{p} \left( u_1(\mathbf{s}) - u_2(\mathbf{s})\right) ds = \int_{0}^{p} \left( u_2(s,1-s) - u_1(s,1-s)\right) ds.
\]

If $k>2$, then $u^*$ can be easily shown to exist if $\mathbf{u}(\cdot)$ itself is known to be conservative. In this case, for every parameterized curve $\mathbf{r}(s), s\in[a,b]\subseteq\mathbb{R}$, that lies entirely in $\Delta$, any tangential vector at a point along $\mathbf{r}$ must lie in $\mathcal{H}$; that is, $\dot{\mathbf{r}}(s)\in\mathcal{H}$ for all $s\in[a,b]$. Assuming  $-\mathbf{u}=\nabla_{\mathbf{p}}v^*$ for some $v^*:\Delta\to\mathbb{R}$, the line integral of $-\mathbf{Hu}(\cdot)$ along the curve $\mathbf{r}(\cdot)$ is given by
\[ 
\int_a^b -\big(\mathbf{Hu(r}(s))\big)' \dot{\mathbf{r}}(s) ds = \int_a^b -\mathbf{u(r}(s))' \mathbf{H} \dot{\mathbf{r}}(s) ds =  \int_a^b -\mathbf{u(r}(s))'  \dot{\mathbf{r}}(s) ds = v^*(\mathbf{r}(b))-v^*(\mathbf{r}(a)),
\]
where the first transition follows from the symmetry of $\mathbf{H}$, the second follows from $\dot{\mathbf{r}}(s)\in\mathcal{H}$, and the third is due to $\mathbf{u}(\cdot)$ being conservative.
This shows that the integral of $\mathbf{-Hu}(\cdot)$ along any curve in $\Delta$ depends only on its endpoints, hence $\mathbf{-Hu}(\cdot)$ is conservative.
\end{remark}

We next proceed to the proof of the theorem.
\subsubsection*{Proof of Theorem \ref{thm:convergence}}
\begin{proof}
Let $\epsilon_n=\gamma_n\ell(\mathbf{p}^{(n)})$, then the algorithm in \eqref{eq:SA_iteration} can be written as
\begin{equation}\label{eq:SA_modified}
\mathbf{p}^{(n+1)}=\pi_\Delta\left(\mathbf{p}^{(n)}+\epsilon_n \frac{\mathbf{G}^{(n)}}{\ell(\mathbf{p}^{(n)})}\right) = \pi_\Delta\left(\mathbf{p}^{(n)}+\epsilon_n \frac{\mathbf{H}\mathbf{G}^{(n)}}{\ell(\mathbf{p}^{(n)})}\right),
\end{equation}
where the second equality is due to Lemma  \ref{lem:aux-lemma-thm5}. Note that $\ell(\mathbf{p})\geq 1$, thus $\sum_{n=1}^\infty\epsilon_n\geq \sum_{n=1}^\infty\gamma_n=\infty$, in accordance with Equation (1.1) of \cite[Ch.~5.1]{KY2003}.
Assumption~\ref{assum:finite_El2} implies that $\ell(\mathbf{p})\leq M<\infty$ for all $\mathbf{p}\in\Delta$, hence by Assumption~\ref{assum:gamma_n} and the compactness of $\Delta$ we have that $\sum_{n=1}^\infty\epsilon_n^2\leq M^2\sum_{n=1}^\infty\gamma_n^2<\infty$ almost surely.  Therefore, condition (2.14) of Theorem~2.4 in \cite[Ch.~5.2]{KY2003} is satisfied.  

Next observe that the iterate $\mathbf{p}^{(n)}$ in \eqref{eq:SA_modified} is confined to the simplex, which is a compact and non-empty polyhedron. Therefore, Assumption~(A4.3.2) of \cite[Ch.~5.2]{KY2003} is satisfied. 
Assumptions~\ref{assum:finite_u} and~\ref{assum:finite_EG2} imply Assumptions (A2.1), (A2.3) of \cite[Ch.~5.2]{KY2003}.  By Lemma~\ref{lemma:unbiased_G} and the linearity of expectation we have that
\begin{align*}
\E_{\mathbf{p}}\left[ \frac{\mathbf{H}\mathbf{G}^{(n)}}{\ell(\mathbf{p}^{(n)})}\right]=\mathbf{H}\mathbf{u}(\mathbf{p}^{(n)}),
\end{align*}
hence Assumption (A2.7) of \cite[Ch.~5.2]{KY2003} is satisfied.  

Combining the above yields the conclusion of Theorem~2.4 in \cite[Ch.~5.2]{KY2003} that $\mathbf{p}^{(n)}$ as defined in \eqref{eq:SA_modified} converges to a set of limit points in $\mathcal{L}$, and together with Assumption \ref{assum:finite_u} we have, by Lemma \ref{lem:limit-points-are-equilibria}, that all points in $\mathcal{L}$ are equilibria, which concludes the proof of the theorem. 
\end{proof}

\subsection{Proof of  Proposition~\ref{prop:eps_NE}}\label{sec:app_norm}

\begin{proof}
Assume without loss that $\mathbf{u}$ is non-negative in all its coordinates and that $\min_{\mathbf{p}\in\Delta}\Vert\mathbf{u(p)}\Vert > 0$. This is possible since $\Vert \mathbf{u}(\cdot) \Vert_\infty$ admits a minimum on $\Delta$, and shifting all coordinates of $\mathbf{u}$ by the same constant neither affects the game nor the algorithm's progression. We
first note that if $\mathbf{u}$ is Lipschitz, then $\mathbf{u(p)'p}$, as a function of $\mathbf{p}$, is also (locally) Lipschitz on $\Delta$: Let $K_{\mathbf{u}}$ be a Lipschitz constant for $\mathbf{u}$ and let $M_{\mathbf{u}}$ be an upper bound on $\Vert \mathbf{u}(\cdot)\Vert$ over $\Delta$. Then for all $\mathbf{p,q}\in\Delta$
\begin{align*}
|\mathbf{u(p)'p} - \mathbf{u(q)'q}| &= |\mathbf{(u(p)-u(q))'p} + \mathbf{u(q)'(p-q)}| \\
&\leq \Vert\mathbf{u(p)-u(q)}\Vert\cdot\Vert \mathbf{p} \Vert + \mathbf{\Vert u(q)\Vert \cdot \Vert p-q \Vert} \\
&\leq \Vert\mathbf{u(p)-u(q)}\Vert + M_\mathbf{u}\mathbf{\Vert p-q \Vert} \\
&\leq (K_{\mathbf{u}}+M_{\mathbf{u}})\Vert\mathbf{p-q}\Vert.
\end{align*}
Thus, $K=K_{\mathbf{u}}+M_{\mathbf{u}}$ is a Lipschitz constant for both $\mathbf{u(p)'p}$ and $\mathbf{u(p)}$.
Note that $u^*$ is strongly convex, therefore there exists a unique equilibrium $\mathbf{p}^e$. Furthermore, $\mathbf{p}^e$ is a best response to itself, i.e., $p_i^e=0$ for any $i$ such that $u_i(\mathbf{p}^e)<\Vert\mathbf{u}(\mathbf{p}^e) \Vert_\infty$, therefore  $\mathbf{u}(\mathbf{p}^e)'\mathbf{p}^e = \Vert\mathbf{u}(\mathbf{p}^e)\Vert_\infty$. Thus, for all $\mathbf{p}\in\Delta$,
\begin{align*}
\Vert \mathbf{u}(\mathbf{p}^e) \Vert_\infty - \mathbf{u(p)'p} = \mathbf{u}(\mathbf{p}^e)'\mathbf{p}^e - \mathbf{u(p)'p} \leq  |\mathbf{u}(\mathbf{p}^e)'\mathbf{p}^e - \mathbf{u(p)'p}| \leq K\Vert\mathbf{p-p}^e\Vert ,  
\end{align*}
hence, $\Vert \mathbf{u}(\mathbf{p}^e) \Vert_\infty - K\Vert\mathbf{p-p}^e\Vert\leq \mathbf{u(p)'p}$.
For all $\mathbf{x}\in\mathbb{R}^k$, $\Vert \mathbf{x} \Vert_\infty \leq \Vert \mathbf{x} \Vert$, and using 
the triangle inequality together with the above yields, for all $\mathbf{p,q}\in\Delta$,
\begin{equation}\label{EQN:conv-rate-eps-eq}
\begin{split}
\mathbf{u(p)'q} & \leq \Vert \mathbf{u(p)} \Vert_\infty = \Vert \mathbf{u(p)} - \mathbf{u}(\mathbf{p}^e) + \mathbf{u}(\mathbf{p}^e)\Vert_\infty 
\leq \Vert\mathbf{u}(\mathbf{p}^e)\Vert_\infty+ \Vert \mathbf{u(p)} - \mathbf{u}(\mathbf{p}^e) \Vert_\infty \\
&\leq \Vert\mathbf{u}(\mathbf{p}^e)\Vert_\infty+ \Vert \mathbf{u(p)} - \mathbf{u}(\mathbf{p}^e) \Vert \leq \Vert\mathbf{u}(\mathbf{p}^e)\Vert_\infty+ K\Vert \mathbf{p} - \mathbf{p}^e \Vert \\
&= \Vert\mathbf{u}(\mathbf{p}^e)\Vert_\infty -K\Vert \mathbf{p} - \mathbf{p}^e \Vert+2K\Vert \mathbf{p} - \mathbf{p}^e \Vert \leq \mathbf{u(p)'p}+ 2K\Vert \mathbf{p} - \mathbf{p}^e \Vert.
\end{split}
\end{equation}

Following Assumptions \ref{assum:finite_El2}, for all $\mathbf{p}\in\Delta$, by Lemma \ref{lemma:unbiased_G}, $\E_{\mathbf{p}}  \mathbf{G} = \ell(\mathbf{p})\mathbf{u(p)}$. Since $\ell$ is a positive real function and  $\Vert\mathbf{u}(\cdot)\Vert$ is bounded away from $0$, using Jensen's inequality, 
\[
\ell(\mathbf{p}) = \frac{\Vert \E_{\mathbf{p}} \mathbf{G} \Vert}{\Vert \mathbf{u(p)}\Vert} \leq \frac{\E_{\mathbf{p}}\Vert \mathbf{G} \Vert}{\Vert \mathbf{u(p)}\Vert}  \leq \frac{\E_{\mathbf{p}}\Vert \mathbf{G} \Vert}{\min_{\mathbf{q}\in\Delta}\Vert \mathbf{u(q)}\Vert} .
\]
Therefore, by Assumption \ref{assum:finite_EG2}, we can choose constants $M_0,M_1>0 $ such that $\E_{\mathbf{p}} \Vert \mathbf{G} \Vert^2\leq M_0^2$ and $\ell(\mathbf{p})\leq M_1$ for all $\mathbf{p}\in\Delta$.
Setting $\gamma_n=\eta/n$, we rewrite the iterate $\mathbf{p}^{(n)}$ as in \eqref{eq:SA_modified} with the corresponding step size $\eta \ell(\mathbf{p}^{(n)})/n$, where we note that $\mathbf{H}\mathbf{G}^{(n)} / \ell(\mathbf{p}^{(n)})$ is unbiased for  $\mathbf{Hu}(\mathbf{p}^{(n)})$ and that $\eta \ell(\mathbf{p}^{(n)})/n \geq \eta /n $ (almost surely). Suppose that $\eta > 1/(2C)$, then
under the strong convexity assumption \eqref{eq:convex_potential}, we can invoke the result in \cite[Eq.~(2.8)]{NJLS2009}, by which we have for all $n\geq 1$ that
\begin{align*}
   \E\Vert\mathbf{p}^{(n+1)}-\mathbf{p}^e\Vert^2 & \leq \left(1-\frac{\eta \ell(\mathbf{p}^{(n)})}{n} \cdot 2C \right) 2 \E\Vert\mathbf{p}^{(n)}-\mathbf{p}^e\Vert^2+\left(\frac{\eta  \ell(\mathbf{p}^{(n)})}{n} \cdot M_0\right)^2 \\
   &\leq  \left(1-\frac{2 C \eta}{n}\right) 2 \E\Vert\mathbf{p}^{(n)}-\mathbf{p}^e\Vert^2+\left(\frac{\eta M_0  M_1}{n}\right)^2 .
\end{align*}
Define $M_\eta = \max\{ (\eta M_0 M_1)^2/(2C\eta-1) , 1 \}$. Then, similarly to \cite[Eq.~(2.9)]{NJLS2009}, by induction we have for all $n\geq 1$ that
\begin{align*}
   \E\Vert\mathbf{p}^{(n)}-\mathbf{p}^e\Vert^2\leq\frac{M_\eta}{n}.
\end{align*}
For any $\delta\in(0,1)$, using Markov's inequality we then get
\begin{align*}
 \P\left(2K\Vert \mathbf{p}^{(n)} - \mathbf{p}^e \Vert > \sqrt{n^{-\delta}}\right)= \P\left(\Vert \mathbf{p}^{(n)} - \mathbf{p}^e \Vert^2 > \frac{n^{-\delta}}{4K^2}\right) \leq \frac{4K^2\E \Vert \mathbf{p}^{(n)} - \mathbf{p}^e \Vert^2}{n^{-\delta}}\leq 4K^2M_\eta\cdot n^{\delta-1}.
\end{align*}
From \eqref{EQN:conv-rate-eps-eq}, for all $\mathbf{q}\in\Delta$,
\[
\mathbf{u}(\mathbf{p}^{(n)})'\mathbf{p}^{(n)} \geq \mathbf{u}(\mathbf{p}^{(n)})\apost \mathbf{q} - 2K\Vert \mathbf{p}^{(n)} - \mathbf{p}^e \Vert,
\]
hence, if $2K\Vert \mathbf{p}^{(n)} - \mathbf{p}^e \Vert\leq \sqrt{ n^{-\delta}}$, then $\mathbf{p}^{(n)}$ is a $\sqrt{ n^{-\delta}}$-equilibrium, and we conclude that
    \begin{align*}
        \P\left(\mathbf{u}(\mathbf{p}^{(n)})\apost\mathbf{p}^{(n)} \geq         \max_{\mathbf{q}\in\Delta}\mathbf{u}(\mathbf{p}^{(n)})\apost \mathbf{q} -\sqrt{ n^{-\delta}}\right)
        \geq  
        1-4\cdot K^2 M_\eta \cdot  n^{\delta-1},
    \end{align*}
which completes the proof.
\end{proof}

\section{Proofs for applications and examples}\label{sec:app_GGS}

\subsection{Proof of Lemma~\ref{lem:GG2_existence}}\label{lem:GG2_existence_proof} Recall that $1/\mu_m$ is the expected service time in queue $m$ and that $\mu_2\geq \mu_1$. For any strategy $\mathbf{p}=(p_1,p_2,p_3)$ the arrival rate to queue $m=1,2$ is $\lambda p_i$. Let $w_m(p_m)$ denote the expected stationary waiting time (service inclusive) in queue $m=1,2$ when strategy $\mathbf{p}=(p_1,p_2,p_3)$ is played by all customers, where we define $w_m(p_m)=\infty$ for any strategy such that $\lambda p_m>\mu_m$. 
Clearly, if $R-C/\mu_2<0$ then in equilibrium no customer will join either queue, hence $\mathbf{p}^e=(0,0,1)$ is the unique Nash equilibrium. 

If $R-C/\mu_2>0>R-C/\mu_1$, then in equilibrium, customers never join queue 1, but a positive fraction of them join queue 2. The function $w_2(p_2)$ is continuous and increasing with respect to $p_2$ (see \cite{W1974}) such that $w_2(0)=1/\mu_2$. Therefore, if $R-Cw_2(1)\geq 0$ then $\mathbf{p}^e=(0,1,0)$ is the unique equilibrium, and if $R-Cw_2(1)< 0$ then there exists some $p_2\in(0,1)$ such that $R-Cw_2(p_2)=0$, i.e., $\mathbf{p}^e=(0,p_2,1-p_2)$ is the unique equilibrium. 

Finally, we address the case $R-C/\mu_2>R-C/\mu_1>0$. If $R-C/w_2(1)>R-C/\mu_1$, then in equilibrium all customers join queue 2 and $\mathbf{p}^e=(0,1,0)$ is the unique equilibrium. If $R-C/w_2(1)\leq R-C/\mu_1$ then an equilibrium strategy must satisfy $p_1,p_2>0$ and $w_1(p_1)=w_2(p_2)$. As before, $w_m(p_m)$ is continuous and increasing in $p_m$, $m=1,2$, hence there are two possible cases, (a) There exists a pair $p_1+p_2<1$ such that $R-C/w_2(p_2)=R-Cw_1(p_1)=0$ and $\mathbf{p}^e=(p_1,p_2,1-p_1-p_2)$ is the unique Nash equilibrium; (b) There exists a pair $p_1+p_2=1$ such that $R-C/w_2(p_2)=R-Cw_1(p_1)>0$ and $\mathbf{p}^e=(p_1,p_2,0)$. This solution is unique because of the monotonicity of the waiting times implies that there is at most one solution $p_1$ to the equation $w_1(p_1)=w_2(1-p_1)$. \qed

\subsection{Proof of Proposition \ref{prop:GGn}}\label{prop:GGn-proof}
\begin{proof}
In light of Theorem \ref{thm:convergence}, to prove our Proposition \ref{prop:GGn}, it suffices to verify that Assumptions \ref{assum:finite_El2}--\ref{assum:finite_u} are satisfied in the underlying model. 

First, we note that $\mathbf{u}(\mathbf{p})$ is a separable function, in the sense that $u_i(\mathbf{p})$ is only a function of $p_i$ for every $i=1,\ldots,k$. By \cite{W1974}, $\mathbf{u}(\cdot)$ is continuous, and can be further expressed as the gradient of the sum of the antiderivatives, 
\[
\mathbf{u}(\mathbf{p}) = \nabla_{\mathbf{p}} \left(\sum_{i=1}^k \int u_i(p_i)dp_i\right).
\]
Thus, Assumption \ref{assum:finite_u} holds.
In what follows, we verify that Assumptions \ref{assum:finite_El2} and \ref{assum:finite_EG2} also hold. To this aim, we construct a coupling between the original system and a single-server queue, which we describe next. 

Given a strategy $\mathbf{p}=(p_1, \dots, p_{k})$, consider a single-server, FCFS queue, which we call the \emph{coupled system}, with identical arrival process $\{T_n\}_{n\geq 1}$ as in the original system. Consider the $j$-th arriving customer, whose arrival time (in both the original and the coupled systems) is $T_j$. We assume that if this customer chooses to join queue $m$ in original system (which occurs with probability $p_m$) with service demand $Y_m$, then customer $j$ in the coupled system with equal service demand $Y_m$ joins at the same time. If customer $j$ in the original system balks, then we say that in the coupled system, customer $j$ joins with 0 service demand. 

Recall our notation $X_j(\mathbf{p})=(X_j^{[1]}(\mathbf{p}), \dots, X_j^{[k-1]}(\mathbf{p}))$ for the vector of workloads in the original system observed by customer $j$, and let the (univariate) r.v. $\tilde{X}_j(\mathbf{p})$ represent its coupled counterpart, namely the workload observed by the $j$-th arriving customer in the coupled system. We assume both the original and the coupled system start empty, $X_1^{[m]}(\mathbf{p})=\tilde{X}_1(\mathbf{p})=0$ for all $m\in\{1,\dots,k-1\}$.  It can be seen from the construction that $X_j^{[m]}(\mathbf{p}) \leq \tilde{X}_j(\mathbf{p})$ with probability one, for all $m\in\{1,\dots,k-1\}$ and $j\geq 0$.
Furthermore, recall that $L(\mathbf{p})=\inf\{n\geq 1:\ X_{n+1}(\mathbf{p})=0^{k-1}(\mathbf{p})\}$ denotes the length of a random  regenerative cycle in the original system and let $\tilde{L}(\mathbf{p})=\inf\{n\geq 1:\ \tilde{X}_{n+1}(\mathbf{p})=0\}$ be its coupled counterpart. Then $L(\mathbf{p})\leq \tilde{L}(\mathbf{p})$ with probability one. Denoting by $\tau(\mathbf{p})$ the departure time of the $L(\mathbf{p})$-th customer in the original system and by $\tilde{\tau}(\mathbf{p})$ the departure time of the $\tilde{L}(\mathbf{p})$-th customer in the coupled system, it is immediate that the busy period in the original system, $B(\mathbf{p})=\tau(\mathbf{p}) - T_1$, and the busy period in the coupled system, $\tilde{B}(\mathbf{p})=\tilde{\tau}(\mathbf{p}) - T_1$, satisfy $B(\mathbf{p}) \leq \tilde{B}(\mathbf{p})$ almost surely. Since for all $m\in\{1,\dots,k-1\}$ and $j \in \{1, \dots, L(\mathbf{p})\}$ we know that $X_j^{[m]}(\mathbf{p}) \leq B(\mathbf{p})$, we conclude that $X_j^{[m]}(\mathbf{p}) \leq \tilde{B}(\mathbf{p})$ with probability one.

For convenience, assume without loss that $\mu_1 \leq \mu_2 \leq \dots \leq \mu_{k-1}$. For any strategy $\mathbf{p}=(p_1, \dots, p_k)$, let $\tilde{Y}(\mathbf{p})$ be a random variable that with probability $p_m$ is a random draw from the distribution $F_m$, $m\in\{1,\dots,k-1\}$, and with probability $p_k$ takes the value 0.
Hence, given $\mathbf{p}$, the coupled system forms a GI/G/1 queue in which service time is distributed similarly to $\tilde{Y}(\mathbf{p})$. Moreover, this queue is stable due to $\lambda < \mu_1$. By assumption, $\E[Y_m^4] <\infty$, and it follows that $\E[\tilde{Y}(\mathbf{p})^4] <\infty$. It is known (see \cite{T1985a}) that this implies $\Ep \tilde{L}(\mathbf{p})^4<\infty$ as well as $\Ep \tilde{B}(\mathbf{p})^4 < \infty$. Thus, $\ell^2(\mathbf{p}) \leq \Ep \tilde{L}(\mathbf{p})^2 < \infty$ and Assumption \ref{assum:finite_El2} follows.

Let $\kappa >0$ be a constant such that $\nu_m$ is $\kappa$-Lipschitz for all $m\in\{1,\dots,k-1\}$, and let $\xi=\max_{m=1,\dots,k-1}\nu_m(0) + \kappa \E[Y_m] $. We have, for every $m\in\{1,\dots,k-1\}$ and $j \in \{1, \dots, L(\mathbf{p})\}$,
\begin{align*}
\overline{v}_m(X_j(\mathbf{p})) &= \E [\nu_m(X_j^{[m]}(\mathbf{p}) + Y_m) \mid X_j^{[m]}(\mathbf{p})]  \\
&\leq \nu_m(0) + \kappa X_j^{[m]}(\mathbf{p}) + \kappa \E[Y_m] \leq \xi  + \kappa X_j^{[m]} (\mathbf{p})\leq \xi  + \kappa \tilde{B}(\mathbf{p})
\end{align*}
almost surely. Thus,
\[
\Vert \mathbf{G} \Vert^2=\Bigg\Vert \sum_{j=1}^{L(\mathbf{p})}  \mathbf{\overline{v}}(X_j(\mathbf{p}))\Bigg\Vert^2 \leq \Bigg\Vert\sum_{j=1}^{L(\mathbf{p})}  (\xi + \kappa \tilde{B}(\mathbf{p}))\mathbf{
e} \Bigg\Vert^2 = k\cdot{L}(\mathbf{p})^2(\xi + \kappa \tilde{B})^2 \leq k\cdot\tilde{L}(\mathbf{p})^2(\xi + \kappa \tilde{B}(\mathbf{p}))^2,
\]
and together with H\"older's inequality, 
\begin{align*}    
\E_{\mathbf{p}}\Vert \mathbf{G}  \Vert^2 \leq k \E_{\mathbf{p}} [\tilde{L}(\mathbf{p})^2(\xi + \kappa\tilde{B}(\mathbf{p}))^2]\leq k \cdot(\E_{\mathbf{p}} [\tilde{L}(\mathbf{p})^4])^\frac{1}{2} (\E_{\mathbf{p}} [(\xi+\kappa\tilde{B}(\mathbf{p}))^4])^\frac{1}{2}<\infty,
\end{align*}
where the last inequality follows from the finiteness of the first four moments of $\tilde{L}(\mathbf{p})$ and $\tilde{B}(\mathbf{p})$. To prove that Assumption \ref{assum:finite_EG2} holds, it suffices to show that
the first four moments of $\tilde{L}(\mathbf{p})$ and $\tilde{B}(\mathbf{p})$ are continuous functions of $\mathbf{p}$, which will imply that $\E_{\mathbf{p}}\Vert \mathbf{G} \Vert^2$ is uniformly bounded on $\Delta$. The remainder of the proof is therefore dedicated to proving the continuity of the moments. 

Let $\mathbf{p}^{(n)}$ be a convergent sequence of strategies, and denote $\mathbf{p}=\lim_{n\to\infty}\mathbf{p}^{(n)}$. Our goal is to prove $\E_{\mathbf{p}^{(n)}} \tilde{L}(\mathbf{p}^{(n)})^r \to \E_{\mathbf{p}} \tilde{L}(\mathbf{p})^r$ and $\E_{\mathbf{p}^{(n)}} \tilde{B}(\mathbf{p}^{(n)})^r \to \E_{\mathbf{p}} \tilde{B}(\mathbf{p})^r$ for $r\leq 4$. The convergence of $\mathbf{p}^{(n)}$ to $\mathbf{p}$ implies that $\tilde{Y}(\mathbf{p}^{(n)})\darrow\tilde{Y}(\mathbf{p})$ as $n\to\infty$.
Since the cycle length and busy period are continuous maps of the queue-length process and workload process, respectively, and since $H$ is a continuous distribution, we have by \cite{W1974} that $\tilde{L}(\mathbf{p}^{(n)})^r \darrow \tilde{L}(\mathbf{p})^r$ and $\tilde{B}(\mathbf{p}^{(n)})^r \darrow \tilde{B}(\mathbf{p})^r$. To show convergence of the means we shall use the dominated-convergence theorem based on the following construction:

Define $Z=\max_{m\in\{1,\dots, k-1\}} Y_m$. Since $\E Y_m^4<\infty$, we have $\E Z^4<\infty$, and clearly, $\E Z> 1/\mu_1$ (note that $Y_1, \dots Y_{k-1}$ are independent). For every $\epsilon \in (0,1)$ there exists some $N_\epsilon$ such that for all $n\geq N_\epsilon$ and $i\in \{1, \dots, k\}$
\[ {p}^{(n)}_i \leq (1-\epsilon)p_i+\epsilon. \]
Assume $\epsilon <  (1/\lambda-1/\mu_1) / (\E Z - 1/\mu_1)$.
Define $\bar{Y}(\mathbf{p})$ as a random variable taking the value $\tilde{Y}(\mathbf{p})$ w.p. $1-\epsilon$ and $Z$ otherwise. Hence $\E\bar{Y}(\mathbf{p})^4 <\infty$,
and
\[
\E\bar{Y}(\mathbf{p}) = (1-\epsilon)\E\tilde{Y}(\mathbf{p}) + \epsilon \E Z  \leq (1-\epsilon) \frac{1}{\mu_1} + \epsilon \E Z = \epsilon\cdot(\E Z -1/\mu_1) + 1/\mu_1 < 1/\lambda.
\]
Furthermore, for $n\geq N_\epsilon$ and $i\in\{1,\dots, k-1\}$, we note that $\tilde{Y}(\mathbf{p}^{(n)})$ takes the (random) value $Y_i$ with probability $p^{(n)}_i$, whereas $\bar{Y}(\mathbf{p})$ takes a value equal or larger than $Y_i$ with probability $(1-\epsilon)p_i+\epsilon \geq p^{(n)}_i$. Hence,
\[ \tilde{Y}(\mathbf{p}^{(n)}) \leq_{\mathrm{st}}  \bar{Y}(\mathbf{p}). \]
Define $\bar{L}$ and $\bar{B}$ as the cycle length and busy period of a GI/G/1 queue with interarrival distribution $H$ and  service distribution similar to that of $\bar{Y}(\mathbf{p})$. Thus, $\tilde{L}(\mathbf{p}^{(n)}) \leq_{\mathrm{st}} \bar{L}$ and $\tilde{B}(\mathbf{p}^{(n)}) \leq_{\mathrm{st}} \bar{B}$. 
Because $\E\bar{Y}(\mathbf{p})<1/\lambda$ and $\E\bar{Y}(\mathbf{p})^4<\infty$, it follows that $\E\bar{L}^r<\infty$ and $\E \bar{B}^r<\infty$ for any integer $r\leq 4$. Since for all $n\geq N_\epsilon$, $\tilde{L}(\mathbf{p}^{(n)})^r \leq_{\mathrm{st}} \bar{L}^r$ and $\tilde{B}(\mathbf{p}^{(n)})^r \leq_{\mathrm{st}} \bar{B}^r$, we conclude, using the dominated-convergence theorem, that
\[ 
\lim_{n\to\infty}\E_{\mathbf{p}^{(n)}} \tilde{L}(\mathbf{p}^{(n)})^r = \E_{\mathbf{p}} \tilde{L}(\mathbf{p})^r \qquad\text{and}\qquad \lim_{n\to\infty}\E_{\mathbf{p}^{(n)}} \tilde{B}(\mathbf{p}^{(n)})^r = \E_{\mathbf{p}} \tilde{B}(\mathbf{p})^r,
\]
hence, we have proven that $\E_{\mathbf{p}} \tilde{L}(\mathbf{p})^r$ and $\E_{\mathbf{p}} \tilde{B}(\mathbf{p})^r$ are continuous in $\mathbf{p}$.
\end{proof}

\subsection{Proof of Proposition \ref{prop:CR}}\label{pro/p:CR-proof}
\begin{proof}
As a special case of the model in \ref{sec:CR}, by setting $\mathbf{p}=\mathbf{e}_2$, we have that all customers join the queue of server 2, hence it can be modeled as an M/G/1 queue, which is stable due to $\lambda < \mu$. Then, by a coupling argument, for every  $\mathbf{p}\in\Delta$, the total workload process $X^{[1]}(t)+X^{[2]}(t)$ is dominated by that of the corresponding M/G/1 queue of  $\mathbf{p}=\mathbf{e}_2$, which, under the assumption $\E[Y^4]<\infty$, has finite cycle-length second moment, as explained in Example \ref{example:MG1} in Section \ref{sec:converg}. Hence, Assumption \ref{assum:finite_El2} holds. Following an explanation similar to that in Example~\ref{example:MG1} in Section \ref{sec:converg}, we conclude that Assumption \ref{assum:finite_EG2} is satisfied. Furthermore, we have in this example that $k=2$, hence Assumption \ref{assum:finite_u} holds, and together with the step size assumption \ref{assum:gamma_n} the conclusion of Theorem \ref{thm:convergence} follows. 
\end{proof}

\section{Unknown stability region}\label{sec:app_stability}

In strategic-queueing literature, it is often claimed, in a rather loose sense, that customer rationality imposes system stability, even when some strategies in the strategy space force the system out of its stability region. However, the current literature is lacking a rigorous mathematical formulation of a queueing game that reifies this intuitive argument. The framework presented in Section \ref{sec:model} does not formally cover models in which some strategies induce system instability, because customer utility is not well defined when the system is non-regenerative.
In this section we 
suggest a heuristic modification of the algorithm that attempts to deal with cases where some strategies render the system unstable, and the stability region of the system is not known a-priori. While we believe this method to be useful in practice, some important theoretical questions revolving stability in queueing games are left unsettled. Hopefully the results presented in this paper will motivate future research on this subject.

Suppose that the underlying system is positive recurrent only for strategies in some non-empty connected subset of the strategy space $\mathcal{S}\subset \Delta$. 
For the sake of the discussion here we will assume that $\mathcal{S}$ itself is unknown, but that $\Delta\setminus\mathcal{S}$ is compact, and that $\mathcal{S}$ contains an equilibrium point. Note that when $\mathcal{S}\neq \Delta$, the existence of an equilibrium in $\mathcal{S}$ does not follow directly from our Assumption \ref{assum:finite_u}. This can be easily seen through a degenerate variation of the Unobservable M/G/1 in Example \ref{example:MG1}, by setting $\lambda > \mu$, hence $\mathcal{S}=\{\mathbf{p}=(p, 1-p) \mid p \in [0, \mu / \lambda) \}$, and letting $C<0$ (namely, customers profit from waiting). Then, although $\mathbf{Hu}$ is a smooth conservative vector field on $\mathcal{S}$, any strategy in $\mathcal{S}$ must prescribe balking with some positive probability, while its best response is to join with probability 1. Furthermore,  $\mathcal{S}$ is non-compact (as is often the case for stability regions of queueing systems), and in this example an equilibrium does not exist even if customers are \textit{restricted} to choose a strategy in $\mathcal{S}$.

For our Definition \ref{def:SNE} of an equilibrium strategy to be valid, $\mathbf{u}$ has to be defined in the extended sense as
\[ 
\mathbf{u(p)} = \lim_{n\to\infty} \E \big[\overline{\mathbf{v}}(X_n(\mathbf{p}))\big]
\]
for all $\mathbf{p}\in\Delta$, including those strategies in $\Delta \setminus \mathcal{S}$. Moreover, this definition has to coincide with the stationary utility whenever a limiting distribution exists, i.e., $\overline{\mathbf{v}}$ has to be a function such that for all $\mathbf{p}\in\mathcal{S}$, if there exists a random variable $X(\mathbf{p})$ such that $X_n(\mathbf{p})\darrow X(\mathbf{p})$ as $n\to\infty$, then it must also hold that
\[ 
\mathbf{u(p)}=\lim_{n\to\infty} \E \big[\overline{\mathbf{v}}(X_n(\mathbf{p}))\big] = \E \big[\overline{\mathbf{v}}(X(\mathbf{p}))\big].
\]
Furthermore, to use our method, appropriate assumptions have to be introduced on the primitives that rule out the existence of equilibria in $\Delta \setminus \mathcal{S}$, and prevent the iterate from ``drifting'' outside $\mathcal{S}$. Such conditions should imply that after finitely many iterations, the iterate is absorbed in a compact subset $\tilde{\mathcal{S}}$ of $\mathcal{S}$ containing an equilibrium point. Rigorously formulating such conditions exceeds the scope of this paper. However, we conjecture that the following condition, which indeed holds in Example \ref{example:MG1} (with strictly positive waiting cost), is sufficient: For all $\mathbf{p}\notin\mathcal{S}$, $\mathbf{u(p)}'\mathbf{p}=-\infty$ and, additionally, for any sequence $\{\mathbf{p}_m\}_{m\geq 1}$ such that $\min_{\mathbf{q}\in\Delta\setminus\mathcal{S}}\Vert \mathbf{p}_m -\mathbf{q} \Vert\to0$, $\lim_{m\to\infty}\mathbf{u(p}_m)'\mathbf{p}_m=-\infty$.
We next present a modified version of the SA algorithm that terminates busy cycles when they exceed a threshold that increases as the number of iteration grows.

At iteration $n\geq 1$, given a strategy $\mathbf{p}^{(n)}$, let
\begin{equation}\label{eqn:G-tilde}
    \tilde{\mathbf{G}}^{(n)}=\sum_{j=1}^{L\wedge\beta_n} \mathbf{\overline{v}}(X_j),
\end{equation}
where $L\wedge\beta=\min\{L,\beta\}$ and $\{\beta_n\}_{n\geq 1}$ is an increasing sequence of integers such that $\lim_{n\to\infty}\beta_n=\infty$. The modified SA algorithm is defined as
\begin{equation}\label{eq:Iter_stability}
        \mathbf{p}^{(n+1)}=\pi_\Delta\left(\mathbf{p}^{(n)}+\gamma_n\tilde{\mathbf{G}}^{(n)}\right) .
\end{equation}
Clearly $\tilde{\mathbf{G}}^{(n)}$ is in general not an unbiased estimator of $\mathbf{g}(\mathbf{p}^{(n)})$, even if $\mathbf{p}^{(n)}\in\mathcal{S}$. However, we next argue that by carefully choosing the threshold
sequence $\{\beta_n\}$, 
$\tilde{\mathbf{G}}^{(n)}$ can be made asymptotically unbiased to $\mathbf{g}(\mathbf{p}^{(n)})$.
Note that the number of summands in \eqref{eqn:G-tilde} is finite for all $n$, thus, under natural conditions on $\overline{\mathbf{v}}$, $\tilde{\mathbf{G}}^{(n)}$ is integrable for all $\mathbf{p}\in\Delta$, and we define $\mathbf{g}_n(\mathbf{p})=\E_\mathbf{p} \tilde{\mathbf{G}}^{(n)}$.

Our goal therefore is to find an appropriate threshold sequence $\{\beta_n\}$. In our suggested heuristic we attempt to choose $\{\beta_n\}$ such that for every $\mathbf{p}\in\mathcal{S}$, the sequence $\{\mathbf{g}_n(\mathbf{p})\}_{n\geq 1}$  converges sufficiently fast to $\mathbf{g}(\mathbf{p})$ as $n\to\infty$.
Note that for all $\mathbf{p}\in\mathcal{S}$, $\mathbf{g}_n(\mathbf{p})$ can be written as
\begin{align*}
    \mathbf{g}_n(\mathbf{p})&=\E_{\mathbf{p}}\tilde{\mathbf{G}}^{(n)} =\E_{\mathbf{p}}\left[\sum_{j=1}^L\mathbf{\overline{v}}(X_j)\right]-\E_{\mathbf{p}}\left[\sum_{j=\beta_n+1}^L\mathbf{\overline{v}}(X_j)\mathbf{1}_{\{L>\beta_n\}}\right] \\
    &= \mathbf{g}(\mathbf{p})-\E_{\mathbf{p}}\left[\sum_{j=\beta_n+1}^L\mathbf{\overline{v}}(X_j)\mathbf{1}_{\{L>\beta_n\}}\right] \\ 
    &= \mathbf{g}(\mathbf{p})-\P_\mathbf{p}(L>\beta_n)\cdot \sum_{j=\beta_n+1}^L\E_{\mathbf{p}} \big[\mathbf{\overline{v}}(X_j)\mid L>\beta_n \big] =  \mathbf{g}(\mathbf{p})- \xi_n(\mathcal{\mathbf{p}}),
\end{align*}
where 
\[
\xi_n(\mathcal{\mathbf{p}}) = \P_\mathbf{p}(L>\beta_n)\cdot \sum_{j=\beta_n+1}^L\E_{\mathbf{p}} \big[\mathbf{\overline{v}}(X_j)\mid L>\beta_n \big].
\]
Assume that $\mathbf{Hu}$ is continuous and conservative on $\mathcal{S}$ (as in \ref{assum:finite_u}) and that $\{\gamma_n\}_{n\geq 1}$ satisfies \ref{assum:gamma_n}. Let  $\tilde{\mathcal{S}}\subset\mathcal{S}$ be a compact subset of $\mathcal{S}$ in which an equilibrium lies, such that $\sup_{\mathbf{p}\in\mathcal{S}}\E_{\mathbf{p}}\Vert \mathbf{G}\Vert^2<\infty$ (as in \ref{assum:finite_EG2}), and in addition, for all $\mathbf{p}\in\tilde{\mathcal{S}}$,
\begin{equation}\label{eqn:xi-n-condition}
\sum_{n=1}^\infty \gamma_n  \ell(\mathbf{p})\cdot \vert \xi_n({\mathbf{p}}) \vert < \infty.
\end{equation}
If for all sufficiently large $n$, the iterate remains (with probability 1) in $\tilde{\mathcal{S}}\subset\mathcal{S}$, 
then, we conjecture that the conclusion of Theorem \ref{thm:convergence} holds. Rigorous results for iteration-dependent functions can be found in \cite[Ch.~6]{KY2003}.

Noticeably, there is an inherent trade-off in the choice of the sequence $\{\beta_n\}_{n\geq 1}$: if $\{\beta_n\}_{n\geq 1}$ diverges too slowly, $\vert \xi_n({\mathbf{p}}) \vert$ diminishes slowly and may fail to satisfy the condition in \eqref{eqn:xi-n-condition}, whereas if $\{\beta_n\}_{n\geq 1}$ diverges very fast, there is a risk that the algorithm will spend long periods of time simulating arrivals to an unstable system. A reasonable choice for $\beta_n$ should satisfy, as an example, for all sufficiently large $n$ and $\mathbf{p}\in\mathcal{S}$,
\begin{equation}\label{eq:beta_tail}
    \P_\mathbf{p}(L>\beta_n)\leq  z(\mathbf{p})^{n}\ ,
\end{equation}
for some function $z:\mathcal{S}\to(0,1)$. If $z$ is bounded over the compact set  $\tilde{\mathcal{S}}$, then this  choice of $\{\beta_n\}_{n\geq 1}$, by the Borel-Cantelli lemma, ensures that the cycle length hits the threshold $\beta_n$ only finitely many times. This can be done, for example, if the cycle-length distribution has a light tail,
i.e., if for all $\mathbf{p}\in\tilde{\mathcal{S}}$ there exists an $\alpha$ such that $\P(L>x)\leq \alpha^x$ (the latter is implied, for example, in the M/G/1 queue, assuming that the service-time distribution is light tailed; see \cite{DMT1980}). Then taking $\beta_n=\kappa n$ for some constant $\kappa>0$ yields $\P_\mathbf{p}(L>\beta_n)\leq \alpha^{\kappa n}$.

In Figure \ref{fig:unstable-mm1} below we present results from a simulation of the Unobservable M/G/1 queue of Example \ref{example:MG1}, which demonstrate the convergence of the iterate to the true equilibrium depicted by the red dashed line. 
In this example, service-time are exponentially distributed with mean $1/\mu=1$ (thus, the queue is an M/M/1), and the (potential) arrival rate $\lambda$ is $2$, hence, $\lambda > \mu$ and $\mathcal{S}=\{\mathbf{p}=(p, 1-p) \mid p \in [0, 1 / 2) \}$. The utility parameters are $R=5$ and $C=1$, implying that the equilibrium joining probability is given by $p^e=0.4$. We implement the modified algorithm, for  $n=1,\dots, 10^6$, using $\tilde{\mathbf{G}}^{(n)}$ as the estimator at iteration $n$, with $\beta_n=n$, step size $\gamma_n=1/n$ and initial strategy $\mathbf{p}^{(1)}=(1/2, 1/2)$. In our experimentation, over the simulation horizon, the cycle length hits the threshold $\beta_n$ a few dozens times, with $n=6,175$ being the last iteration index for which the threshold was hit. 
The last iteration $n$ 
for which the the iterate exists the stability region is $n=5,059$, where we have $\mathbf{p}^{(n)}=(0.53,0.47)\notin \mathcal{S}$.
This provides a strong empirical evidence that the iterate is absorbed in a compact set in the domain of attraction of the equilibrium point $\mathbf{p}^e$.
\begin{figure}[H]
\centerline{\includegraphics[scale=.5]{"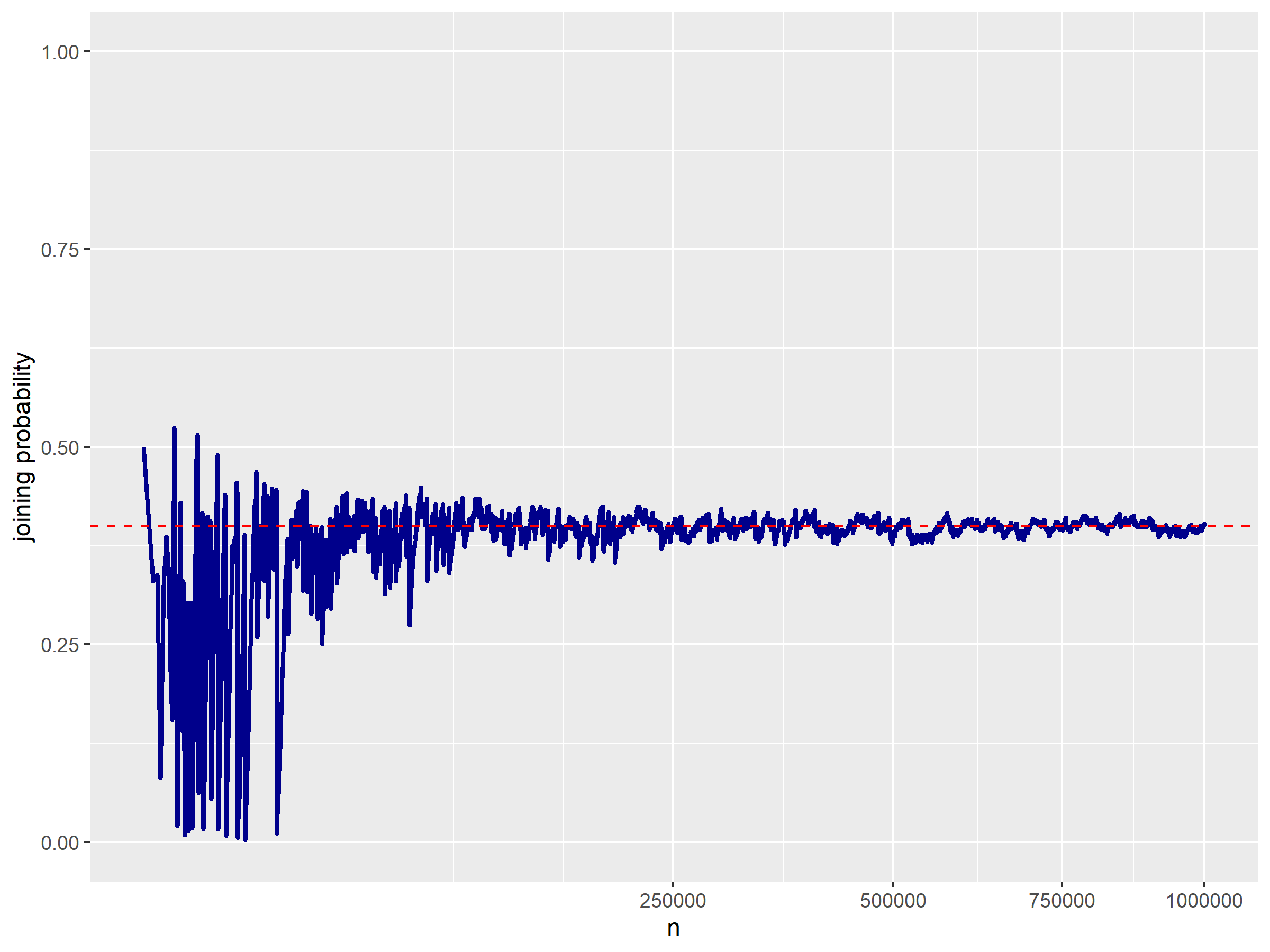"}}
\caption{Convergence of the modified SA algorithm as $n\to\infty$. The first coordinate of $\mathbf{p}^{(n)}=({p}^{(n)}, 1- {p}^{(n)})$, i.e., the joining probability at stage $n$, is plotted (in blued) vs. $n$ on a square-root scale. The red dashed line depicts the correct equilibrium joining probability, which is given by $p^e=0.4$.} \label{fig:unstable-mm1}
\end{figure}


\section{Extension to games with state information}\label{sec:extensions-obs}
In this section we extend our concept of the surrogate best-response function to the model with state information. Theorem~\ref{thm:convergence-obs} follows directly from this construction. 

First, we redefine the surrogate best-response function as
\begin{equation} \label{EQN:f-def-observable}
\mathbf{f}(\mathbf{p}) 
= \argmin_{\mathbf{q}\in\mathcal{P}} \sum_{s\in \mathcal{I}} \left\Vert \mathbf{p}(s) + \xi_\mathbf{p}(s)\cdot\mathbf{Hu}(\mathbf{p}\mid s)  -\mathbf{q}(s) \right\Vert^2.
\end{equation}
Similar to (\ref{EQN:br-def-observable}), the optimization problem in (\ref{EQN:f-def-observable}) is also separable, motivating the following definition, for all $s\in\mathcal{I}$:
\begin{equation} \label{EQN:fs-def}
\mathbf{f}(\mathbf{p}\mid s) = \pi_{\Delta} \left( \mathbf{p}(s) + \xi_\mathbf{p}(s)\cdot\mathbf{Hu}(\mathbf{p}\mid s) \right) = \pi_{\Delta} \left( \mathbf{p}(s) + \xi_\mathbf{p}(s)\cdot\mathbf{u}(\mathbf{p}\mid s) \right)
\end{equation}
which is a real function from $\Delta$ to itself. The second transition in \eqref{EQN:fs-def} follows Lemma \ref{lem:aux-lemma-thm5}.
We can therefore think of $\mathbf{f}$ as a vector valued function, $\mathbf{f}:\mathcal{P}\to\mathcal{P}$ (as opposed to $\mathcal{BR}$, which is a correspondence, $\mathcal{BR}:\mathcal{P}\to2^{\mathcal{P}}$),
and in addition, $\mathbf{p}=\mathbf{f(p)}$ if and only if $\mathbf{p}(s)=\mathbf{f(p}\mid s)$ for all $s\in \mathcal{I}$. 
\begin{lemma}\label{lem:aux-lemma-observable}
Suppose for every $s\in \mathcal{I}$, $\xi_\mathbf{p}(s)\cdot\mathbf{Hu}(\mathbf{p}\mid s)$ is continuous on $\mathcal{P}$ (as a function of $\mathbf{p}$). Then $\mathbf{f}(\cdot)$ is continuous on $\mathcal{P}$, and therefore admits a fixed point. In addition, any strategy is a fixed point for $\mathbf{f}$ if and only if it is an equilibrium, i.e., $\mathbf{p}=\mathbf{f}(\mathbf{p})$ if and only if $\mathbf{p}\in\mathcal{BR}(\mathbf{p})$, hence, an equilibrium exists.
\end{lemma}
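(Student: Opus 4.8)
The plan is to establish the three assertions in order—continuity of $\mathbf{f}$, existence of a fixed point by Brouwer, and the equivalence between fixed points of $\mathbf{f}$ and equilibria—after which existence of an equilibrium is immediate by combining the last two. Throughout I would exploit the separability already noted for both \eqref{EQN:br-def-observable} and \eqref{EQN:fs-def}, reducing everything to claims about a single signal $s$ and then reassembling over the finite set $\mathcal{I}$.

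For continuity I would argue signalwise. By Lemma \ref{lem:aux-lemma-thm5} the $s$-component can be written as $\mathbf{f}(\mathbf{p}\mid s)=\pi_\Delta\!\left(\mathbf{p}(s)+\xi_{\mathbf{p}}(s)\cdot\mathbf{H}\mathbf{u}(\mathbf{p}\mid s)\right)$. The coordinate evaluation $\mathbf{p}\mapsto\mathbf{p}(s)$ is continuous, the term $\xi_{\mathbf{p}}(s)\cdot\mathbf{H}\mathbf{u}(\mathbf{p}\mid s)$ is continuous by hypothesis, and $\pi_\Delta$ is nonexpansive (hence continuous) as the metric projection onto a closed convex set; their composition is therefore continuous. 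Since $\mathcal{I}$ is finite, $\mathbf{f}$ is continuous as a map on $\mathcal{P}$. Because $\pi_\Delta$ takes values in $\Delta$, $\mathbf{f}$ maps $\mathcal{P}$ into itself, and $\mathcal{P}$ is a finite product of simplices, hence compact and convex. Brouwer's fixed-point theorem then supplies a fixed point.

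For the equivalence I would treat recurrent and transient signals separately. A strategy satisfies $\mathbf{p}=\mathbf{f}(\mathbf{p})$ iff $\mathbf{p}(s)=\mathbf{f}(\mathbf{p}\mid s)$ for every $s\in\mathcal{I}$, while the separable objective in \eqref{EQN:br-def-observable} gives $\mathbf{p}\in\mathcal{BR}(\mathbf{p})$ iff $\mathbf{p}(s)\in\argmax_{\mathbf{r}\in\Delta}\mathbf{u}(\mathbf{p}\mid s)'\mathbf{r}$ for each $s\in\mathcal{I}(\mathbf{p})$, with no constraint imposed at transient signals. For a recurrent signal $s\in\mathcal{I}(\mathbf{p})$ we have $\xi_{\mathbf{p}}(s)>0$, so I would apply the argument of Lemma \ref{lem:equilibrium-equivalence} to the vector $\xi_{\mathbf{p}}(s)\,\mathbf{u}(\mathbf{p}\mid s)$—noting, as in Remark \ref{rem:choice-of-ell}, that scaling by the positive constant $\xi_{\mathbf{p}}(s)$ leaves the $\argmax$ unchanged—to obtain $\mathbf{p}(s)=\pi_\Delta(\mathbf{p}(s)+\xi_{\mathbf{p}}(s)\mathbf{u}(\mathbf{p}\mid s))$ iff $\mathbf{p}(s)\in\argmax_{\mathbf{r}\in\Delta}\mathbf{u}(\mathbf{p}\mid s)'\mathbf{r}$. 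For a transient signal $s\notin\mathcal{I}(\mathbf{p})$ we have $\xi_{\mathbf{p}}(s)=0$, so $\mathbf{f}(\mathbf{p}\mid s)=\pi_\Delta(\mathbf{p}(s))=\mathbf{p}(s)$ and the fixed-point condition holds automatically. Assembling both cases yields $\mathbf{p}=\mathbf{f}(\mathbf{p})$ iff $\mathbf{p}\in\mathcal{BR}(\mathbf{p})$; applying this to the Brouwer fixed point from the previous step proves that an equilibrium exists.

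I expect the main obstacle to be the transient-signal bookkeeping in the equivalence step: one must check that the surrogate map and the best-response correspondence disregard transient signals in exactly the same way—the former because the update term $\xi_{\mathbf{p}}(s)\mathbf{H}\mathbf{u}(\mathbf{p}\mid s)$ vanishes and leaves $\mathbf{p}(s)$ fixed, the latter because the corresponding term drops out of the objective—so that neither imposes a spurious condition where none is warranted. The recurrent-signal case, by contrast, reduces cleanly to the already-established unobservable equivalence once positivity of $\xi_{\mathbf{p}}(s)$ is invoked.
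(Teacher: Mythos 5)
Your proposal is correct and follows essentially the same route as the paper's proof: continuity of $\mathbf{f}$ followed by Brouwer on the compact convex set $\mathcal{P}$, and the projection-based equivalence argument of Lemma \ref{lem:equilibrium-equivalence} applied signalwise to $\xi_{\mathbf{p}}(s)\mathbf{u}(\mathbf{p}\mid s)$. The only cosmetic differences are that you obtain continuity from nonexpansiveness of $\pi_\Delta$ rather than the maximum theorem, and you invoke Lemma \ref{lem:equilibrium-equivalence} via positive rescaling (handling transient signals explicitly) where the paper repeats that lemma's quadratic expansion inline; both are valid.
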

\begin{proof}
Under the conditions of Lemma \ref{lem:aux-lemma-observable}, the objective function in the LHS of (\ref{EQN:f-def-observable})
is continuous in $(\mathbf{p},\mathbf{q})$ on the product space $\mathcal{P}\times\mathcal{P}$, and $\mathcal{P}$ is compact, thus by the maximum theorem $\mathbf{f}$ is continuous over $\mathcal{P}$. As a result, by Brouwer's fixed-point theorem, $\mathbf{f}$ admits a fixed point in $\mathcal{P}$. The rest of the proof is similar to that of Lemma \ref{lem:equilibrium-equivalence}:

Suppose $\mathbf{p}$ is an equilibrium, i.e., $\mathbf{p}\in\mathcal{BR}(\mathbf{p})$. Then for all $s\in \mathcal{I}$, the expression $\xi_\mathbf{p}(s)\mathbf{u}(\mathbf{p}\mid s)'\mathbf{r}$ is maximized at $\mathbf{r}=\mathbf{p}(s)$, hence $\xi_\mathbf{p}(s)\mathbf{u}(\mathbf{p}\mid s)'(\mathbf{p}(s) - \mathbf{q}(s))\geq 0$ for all $\mathbf{q}\in\mathcal{P}$. Consider a strategy $\mathbf{q} \neq \mathbf{p}$, thus, for any $s\in\mathcal{I}$ such that $\mathbf{q}(s) \neq \mathbf{p}(s)$, 
\begin{align*}
\Vert \mathbf{p}(s) + \xi_\mathbf{p}(s)\mathbf{u}(\mathbf{p}\mid s)-\mathbf{q}(s) \Vert^2 &= \Vert \mathbf{p}(s)-\mathbf{q}(s) \Vert^2 + 2\xi_\mathbf{p}(s)\mathbf{u}(\mathbf{p}\mid s)'(\mathbf{p}(s) - \mathbf{q}(s)) + \Vert\xi_\mathbf{p}(s)\mathbf{u}(\mathbf{p}\mid s)\Vert^2 \\ 
&> \Vert\xi_\mathbf{p}(s)\mathbf{u}(\mathbf{p}\mid s)\Vert^2 = \Vert \mathbf{p}(s) + \xi_\mathbf{p}(s)\mathbf{u}(\mathbf{p}\mid s)-\mathbf{p}(s) \Vert^2.
\end{align*}
Hence, $\mathbf{p}(s)=\mathbf{f}(\mathbf{p}\mid s)$, meaning that $\mathbf{p}=\mathbf{f}(\mathbf{p})$. Assume on the contrary that  $\mathbf{p}$ is not an equilibrium, thus we can choose $\mathbf{q}\in\mathcal{P}$ and $s\in \mathcal{I}(\mathbf{p})$, $\xi_{\mathbf{p}}(s)>0$, such that $\mathbf{u}(\mathbf{p}\mid s)'\mathbf{q}(s) > \mathbf{u}(\mathbf{p}\mid s)'\mathbf{p}(s)$.
Then a parameter $\theta \in (0,1)$ can be chosen such that 
\[\theta<\min\left\lbrace \frac{2\xi_{\mathbf{p}}(s)\mathbf{u(p}\mid s)'(\mathbf{q}(s)-\mathbf{p}(s))}{\Vert \mathbf{q}(s)-\mathbf{p}(s)\Vert^2}, 1 \right\rbrace, \] 
noting from the assumptions that the right-hand side is strictly positive. We then define
$\tilde{\mathbf{q}}=\mathbf{p}+\theta(\mathbf{q-p})$ to obtain, after rearrangement,
\[ 
\begin{split}
& \left\Vert \mathbf{p}(s) + \xi_\mathbf{p}(s)\cdot\mathbf{u}(\mathbf{p}\mid s)  -\tilde{\mathbf{q}}(s) \right\Vert^2 \\
 & \qquad = \theta \left( \Vert \mathbf{q}(s)-\mathbf{p}(s)\Vert^2 +  2\xi_{\mathbf{p}}(s)\mathbf{u(p}\mid s)'(\mathbf{q}(s)-\mathbf{p}(s)) \right) +  \Vert\xi_\mathbf{p}(s)\mathbf{u}(\mathbf{p}\mid s)\Vert^2 \\
& \qquad  < \Vert\xi_\mathbf{p}(s)\mathbf{u}(\mathbf{p}\mid s)\Vert^2 = \left\Vert \mathbf{p}(s) + \xi_\mathbf{p}(s)\cdot\mathbf{u}(\mathbf{p}\mid s)  -\mathbf{p}(s) \right\Vert^2,
\end{split}
\]
and therefore $\mathbf{p}(s) \neq \mathbf{f}(\mathbf{p}\mid s)$, implying that $\mathbf{p} \neq \mathbf{f}(\mathbf{p})$.
\end{proof}

\subsubsection*{Proof of Theorem \ref{thm:convergence-obs}}
\begin{proof}
Denote $|\mathcal{I}|=l$. With slight abuse of notation, we treat a strategy $\mathbf{p}\in\mathcal{P}$ hereafter as a real $k\times l$ matrix, whose $s$-th column, $(p_{1,s}, \dots p_{k,s})$, is given by the vector $\mathbf{p}(s)$ defined previously. Define the mapping $\bar{\mathbf{u}}:\mathcal{P}\to\mathbb{R}^{k\times l}$ such that $\bar{\mathbf{u}}(\mathbf{p})$ is a matrix whose $s$-th column, $(\bar{u}_{1,s}(\mathbf{p}), \dots, \bar{u}_{k,s}(\mathbf{p}))$, is given by
$\xi_{\mathbf{p}}(s)\mathbf{Hu(p}\mid s)$. Note that the strategy space $\mathcal{P}$ is the product space of $l$ copies of $\Delta$, hence it is a compact polyhedron in $\mathbb{R}^{k\times l}$. Furthermore, the projection of an element in $\mathbf{x}\in\mathbb{R}^{k\times l}$ onto $\mathcal{P}$ is computed simply by projecting each column of $\mathbf{x}$ onto $\Delta$.
Then the corresponding ODE for the algorithm in \eqref{eq:SA-iteration-obs} is
\begin{equation}\label{eq:ODE_def-obs}
\mathbf{\dot{p}=\bar{u}(p)+z}, \quad  -(z_{1,s}(t), \dots z_{k,s}(t) )\in\mathcal{C}((p_{1,s}(t), \dots p_{k,s}(t) )),\: \forall t\geq 0, 1\leq s\leq l,
\end{equation}
where $(z_{1,s}(t), \dots z_{k,s}(t))$ is the $s$-th column vector of $\mathbf{z}$, and $\mathcal{C}$ is the set-valued map defined in Section \ref{sec:app_convergence-proof}. 
Let $u^*:\mathcal{P}\to\mathbb{R}^l$ be the potential function, and let $\nabla u^*(\mathbf{p})$ be its gradient at $\mathbf{p}$, i.e., a real $k\times l$ matrix whose element at the $i$-th row and $s$-th column is $(\partial/\partial p_{i,s})u^*(\mathbf{p})$. Then Assumption \ref{assum:finite_u-obs} implies that $\nabla u^*(\mathbf{p})=-\bar{\mathbf{u}}(\mathbf{p})$. 
By repeating the same steps as in the proof of Lemma \ref{lem:limit-points-are-equilibria}, we have that each point $\mathbf{p}$ in the limit set of \eqref{eq:ODE_def-obs} is stationary, i.e., satisfies $\mathbf{p}=\pi_{\mathcal{P}}(\mathbf{p}+\mathbf{\bar{u}(p)})$, and therefore is an equilibrium strategy.

Similarly to the proof of Theorem \ref{thm:convergence}, we define  $\epsilon_n=\gamma_n\ell(\mathbf{p}^{(n)})$, observing that $\sum_{n=1}^\infty\epsilon_n= \infty$ and $\sum_{n=1}^\infty\epsilon_n^2 < \infty$ due to Assumptions \ref{assum:finite_El2-obs} and \ref{assum:gamma_n}. The algorithm in \eqref{eq:SA-iteration-obs} can be therefore written as
\begin{equation}
\mathbf{p}^{(n+1)}= \pi_{\mathcal{P}}\left(\mathbf{p}^{(n)}+\epsilon_n \frac{\mathbf{H}\mathbf{G}^{(n)}}{\ell(\mathbf{p}^{(n)})}\right),
\end{equation}
where $\mathbf{G}$ is a $k\times l$ matrix with $\mathbf{G}(s)$ as its $s$-th column. From \eqref{eq:unbiased-G-obs} we have that 

\begin{align*}
\E_{\mathbf{p}}\left[ \frac{\mathbf{H}\mathbf{G}^{(n)}}{\ell(\mathbf{p}^{(n)})}\right]=\bar{\mathbf{u}}(\mathbf{p}^{(n)}).
\end{align*}
Thus, as in the proof of Theorem \ref{thm:convergence}, we  invoke Theorem~2.4 in \cite[Ch.~5.2]{KY2003}, to conclude that $\mathbf{p}^{(n)}$ converges to a set of limit points of \eqref{eq:ODE_def-obs}, and since every limit point is an equilibrium strategy, we obtain the result.
\end{proof}

\textcolor{black}{
\section{Multiple customer types}\label{sec:heterogeneous-customers}
Many of our results can be easily extended to queueing games with heterogeneous types of customers interacting in the same system. Below we explain, through a representative example, how to utilize our framework to approximate an equilibrium strategy with two types of customers. The model we consider in this section is a natural extention of the Unobservable M/G/1 presented in Example \ref{example:MG1}; We assume customers arrive to the system according to a Poisson process with rate $\lambda$, and services are iid with mean $\frac{1}{\mu}$.
We consider two customer types, 1 and 2, indexed by $\theta\in\{1,2\}$, and assume that each customer's type is drawn independently of the system state. Similarly to the single-type model, we denote by $R_\theta$ and $C_\theta$ the service evaluation and waiting cost rate for type $\theta\in\{1,2\}$. As before, we let $X_j$ represent the virtual workload upon the $j$-th arrival instant. We redefine $\overline{\mathbf{v}}(X_j)$ as a mapping of a possible system state to a matrix in $\mathbb{R}^{2\times 2}$, with one conditional expected utility vector for each type,
\[ 
\overline{\mathbf{v}}(X_j)= \begin{pmatrix} \overline{\mathbf{v}}_1(X_j) & \overline{\mathbf{v}}_2(X_j)\end{pmatrix} = \begin{pmatrix} R_1-C_1\cdot(X_j+1/\mu) & R_2-C_2\cdot(X_j+1/\mu) \\ 0 & 0 \end{pmatrix},
\]
and the vector of expected utilities is again expressed by  
$\mathbf{u}(\mathbf{p})=\E_\mathbf{p} \big[\mathbf{\overline{v}}(X(\mathbf{p})) \big]$.
Knowing their type, each customer chooses whether to join or balk without observing the system state. Thus, a strategy profile, $\mathbf{p}\in \Delta^2$, is now represented by a pair of point distributions over actions, $\mathbf{p}=(\mathbf{p}_1 \quad \mathbf{p}_2 )\in \Delta^2$. For a matrix $\mathbf{m}\in \mathbb{R}^{2\times 2}$, let $(\mathbf{m})_\theta$ denote its $\theta$-th column, $\theta\in\{1,2\}$.
An equilibrium is then defined as a strategy profile $\mathbf{p}^e\in \Delta^2$ such that for every $\theta\in\{1,2\}$,
\[ 
(\mathbf{p}^e)_\theta\in\argmax_{\mathbf{q}\in \Delta}( \mathbf{u}(\mathbf{p}^e))_\theta'\mathbf{q}.
\]
}

\textcolor{black}{
Since customer types are independent of the system state, we can consider an arrival of an arbitrary customer (of any type) to an idling server as a time point of system regeneration. Each iteration of the SA algorithm includes a simulation of a single regeneration cycle and adaptation of the strategy profile.
By indexing customers of a single regeneration cycle by $j\in\{1,2,\dots, L\}$ (where $L$ is the cycle length) and denoting by $X_j$ the system state upon arrival of the $j$-th customer, we can construct our utility estimator, 
\[ \mathbf{G}=\sum_{j=1}^L \mathbf{\overline{v}}(X_j). \]
}

\textcolor{black}{
Consistent with the notation in previous sections, we let $\mathbf{p}^{(n)}$, $\mathbf{G}^{(n)}$, and $\gamma_n$
denote the strategy profile, utility estimator, and step size, respectively, at the $n$-th iteration of the algorithm. Setting an arbitrary initial strategy $\mathbf{p}^{(1)}\in\Delta^2$, the update scheme is given by
\begin{equation}
    \mathbf{p}^{(n+1)} = \pi_{\Delta^2} \left( \mathbf{p}^{(n)} + \gamma_n \mathbf{G}^{(n)}\right),\ n\geq 1.
\end{equation}
}

\textcolor{black}{
Figure \ref{fig:2types-mm1} below depicts the convergence of the iterate to the (unique) equilibrium for an M/M/1 queue with proportion $\alpha=0.3$ of the customers being type 1 and the rest type 2. The arrival process is Poisson with (total) rate of $\lambda=0.7$ arrivals per unit time, and services are exponentially distributed with parameter 1. The cost and reward parameters for the two types are given by $R_1 =3$, $C_1 =1$, $R_2 =5$ and $C_2 =2$. The blue curve corresponds to the joining probability of type-1 customers (i.e., the first component of $\mathbf{p}^{(n)}_1$), while the green curve corresponds to the joining probability of type-2 customers (i.e., the first component of $\mathbf{p}^{(n)}_2$), as functions of the iteration number, $n$. In this example, $\frac{R_1}{C_1}>\frac{R_2}{C_2}$, implying that type-2 customers join the system only when type-1 customers join with probability 1. In this specific example, the unique equilibrium strategy profile can be easily calculated as we have that the joining probability of type-2 customers in equilibrium (assuming all type-1 customers join) is equal to $\left(\mu-\alpha\lambda-\frac{C_2}{R_2}\right) / ((1-\alpha)\lambda)$, so that 
\[ 
(\mathbf{p}^e)_1=\begin{pmatrix} 1 \\ 0 \end{pmatrix}, \quad (\mathbf{p}^e)_2=\begin{pmatrix} 0.796 \\ 0.204 \end{pmatrix}.
\]
For $\gamma_n=\frac{1}{2n}$ and initial joining probability of $0.5$ for each type, after $10^6$ iterations, we obtain an approximation error of the order of $10^{-4}$.}
\begin{figure}[H]
\centerline{\includegraphics[scale=.5]{"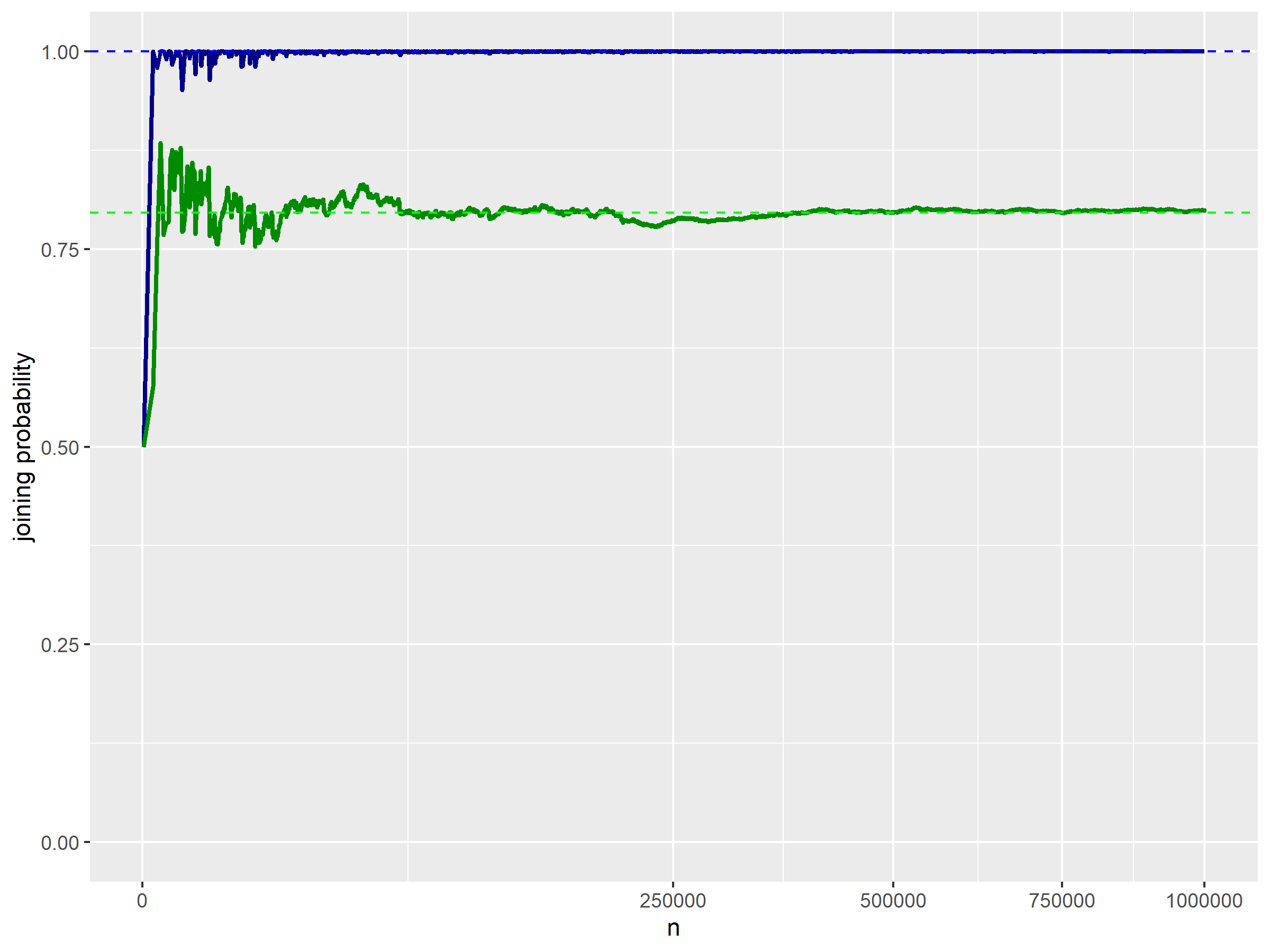"}}
\caption{\textcolor{black}{Convergence as $n\to\infty$ of the SA algorithm with multiple customer types. The joining probability at stage $n$, for type-1 customer (in blue) and type-2 customers (in green), is plotted vs. $n$ on a square-root scale. The blue dashed line depicts the correct equilibrium joining probability for type 1, which is given by $1$, and the green dashed line depicts the correct equilibrium joining probability for type 2, which is given by $0.796$.}} \label{fig:2types-mm1}
\end{figure}

\end{appendices}

\end{document}